\newtheorem{theorem}{Theorem}[section]
\newtheorem{thm}[theorem]{Theorem}
\newtheorem{lemma}[theorem]{Lemma}
\newtheorem{lem}[theorem]{Lemma}
\newtheorem{proposition}[theorem]{Proposition}
\newtheorem{corollary}[theorem]{Corollary}
\newtheorem{definition}[theorem]{Definition}
\newtheorem{remark}[theorem]{Remark}
\newtheorem{Lemma}[theorem]{Lemma}
\newtheorem{Corollary}[theorem]{Corollary}
\newtheorem{Definition}[theorem]{Definition}
\newtheorem{Remark}[theorem]{Remark}
\newcommand{\be}{\begin{equation}}
\newcommand{\ee}{\end{equation}}
\newcommand{\beq}{\begin{equation*}}
\newcommand{\eeq}{\end{equation*}}
\newcommand{\enq}{\end{equation}}
\newcommand{\ben}{\begin{eqnarray}}
\newcommand{\een}{\end{eqnarray}}
\newcommand{\bea}{\begin{eqnarray*}}
\newcommand{\eea}{\end{eqnarray*}}
\def\cH{{\mathcal H}}
\newcommand{\cL}{ {\mathcal{L}}}
\newcommand{\zb}{\overline{z}}
\newcommand{\clos}{\mbox{\rm clos}}
\def\ker{{\mathrm{ker\,}}}
\def\Ran{{\mathrm{Ran\,}}}
\def\Span{{\mathrm{Span\,}}}
\newcommand{\Rr}{{\mathbb{R}}}
\newcommand{\Eta}{\eta}
\newcommand{\mut}{\widetilde{\mu}}
\newcommand{\mutb}{\overline{\widetilde{\mu}}}
\newcommand{\llangle}{\left\langle}
\newcommand{\rrangle}{\right\rangle}
\newcommand{\eps}{\varepsilon}
\def\C{\mathbb C}
\def\R{\mathbb R}
\newcommand{\norm}[1]{\left\Vert#1\right\Vert}
\newcommand{\U}{\left(\begin{array}{c} v_{-} \\ u \\ v_{+} \end{array}\right)}
\def\@fnsymbol#1{\ensuremath{\ifcase#1\or \dagger\or \ddagger\or
   \mathsection\or \mathparagraph\or \|\or **\or \dagger\dagger
   \or \ddagger\ddagger \else\@ctrerr\fi}}
\numberwithin{equation}{section}
\def\C{\mathbb C}
\def\R{\mathbb R}
\def\H{\mathcal H}
\def\L{\mathcal L}
\newcommand{\ian}[1]{{\textcolor{black}{#1}}}
\newcommand{\mm}[1]{{\textcolor{black}{#1}}}
\title[Spectral form]{The spectral form of the functional model for maximally dissipative operators: A Lagrange identity approach}
\author[Brown]{Malcolm Brown}
\address{Malcolm Brown, School of Computer Science and Informatics, Cardiff University, Abacws,
Senghennydd Road, Cardiff CF24 4AG, UK. \em Malcolm Brown died on the 14th of January 2022.}
\author[Marletta]{Marco Marletta}
\address{Marco Marletta, School of Mathematics, Cardiff University, Abacws,
Senghennydd Road, Cardiff CF24 4AG, UK}
\email{MarlettaM@cardiff.ac.uk}
\author[Naboko]{Serguei Naboko}
\address{Serguei Naboko, Dept. of Mathematics and Mathematical Physics,
Russia, 198904, St. Petersburg, Staryj Peterhof, Uljanovskaja 1. \em Serguei Naboko died on the 24th of December 2020.}
\author[Wood]{Ian Wood}
\address{Ian Wood, School of Mathematics, Statistics and Actuarial Sciences, Sibson Building,
 University of Kent, Canterbury, CT2 7FS, UK}
\email{i.wood@kent.ac.uk}
\begin{document}
\begin{abstract}\ian{
This paper is a contribution to the theory of functional models. In particular, it develops the so-called spectral form of the functional model where the selfadjoint dilation of the operator is represented as the operator of multiplication by an independent variable in
some auxiliary vector-valued function space. By using a Lagrange identity, in our version the connection between this auxiliary space and the original Hilbert space will be explicit. A simple example is provided.}
 \end{abstract}
\maketitle

\begin{center}
\textit{Dedicated to the memory of Professor B.S.Pavlov (1936-2016), outstanding mathematician and personality, who made great  contributions to the theory of functional models.}
\end{center}

\section{Introduction}

{The spectral and scattering properties of non-selfadjoint problems have become a subject of much
mathematical  and physical interest in recent years. Mathematically these problems pose a challenge, as apart from  exceptional cases, the well-developed methods used to examine the spectrum of  selfadjoint problems are not applicable. One of the tools to attack non-selfadjoint problems is functional models. }

Functional models were introduced by Sz.-Nagy and Foias (see \cite{SFBK10,Nik86} and references therein)  to analyse the structure 
of contractions and relations between an operator, its spectrum and its characteristic function, and independently by de Branges \cite{dB68}. 
These works built on the earlier papers  \cite{Liv46, Liv54} of Liv\v{s}ic for the triangular model. The ideas of Sz.-Nagy-Foias inspired great interest in the Soviet school. In particular, Pavlov \cite{Pav75} introduced a very useful symmetric version of the Sz.-Nagy-Foias model;
Nikolski and Vasyunin \cite{NikVas98} formulated a coordinate-free model; and Tikhonov \cite{Tik2004} re-developed the Sz.-Nagy-Foias model in the Nikolski-Vasyunin framework. As pointed out in \cite{NikVas89}, the coordinate-free model has the advantage of leaving the choice of spectral representation of the dilation to the user in the context of particular applications.

 Pavlov was always clear that his symmetric model should be used to solve real physical problems, and indeed his  work on quantum switches \cite{Pav02} and Naboko and Romanov's work on time asymptotics for the  Boltzmann operator \cite{NR01} have relied heavily on it. Pavlov himself developed a simplified version of his functional model for Schr\"{o}dinger operators \cite{Pav77}. Vasyunin \cite{Vas77} and Naboko \cite{Nab81} also introduced simplifications of Pavlov's original model; in particular, Naboko's model for additive perturbations is directly based 
 on Pavlov's model in \cite{Pav77}.
  
A drawback of many functional models is that their constructions require objects which may be difficult to describe explicitly, such as operator square roots, making it  hard to apply the results to specific examples.   In this context, Naboko's approach had at least two significant advantages: firstly, it gave explicit formulae for all expressions arising in the model, in terms of objects
which arise naturally in the description of the original operator (e.g. the imaginary part of the potential of a Schr\"{o}dinger operator); secondly, unlike approaches based on Cayley transformation to contractions,  it also allowed the study of non-dissipative operators.  Ryzhov's functional model for the case when the perturbation is only in the boundary conditions \cite{Ryz07} enjoys similar advantages, and inspired the work of Cherednichenko, Kiselev and Silva \cite{CKS19} on transmission problems for PDEs.

Our aim in recent work has been to develop a functional model for the case when the non-selfadjointness arises both in additive terms and in the boundary conditions. In a first paper, \cite{BMNW20}, we considered  a general maximally dissipative operator and   developed the so-called `translation form' of the functional model. We presented a construction of the selfadjoint dilation 
based on the Lagrange identity in the spirit of operator colligations \cite{Bro71,Bro78}. 
The flexibility of the choice of the $\Gamma$-operators in the Lagrange identity means that these can be chosen so that expressions arising in the dilation are given explicitly in terms of physical parameters (coefficients, boundary conditions and Titchmarsh-Weyl $M$-function) of the maximally dissipative operator. The presentation of such explicit expressions for the spectral
form of the functional model is arguably the main contribution of the present paper.

In the spectral form of the functional model, the dilation is very simple, being the operator of multiplication by an independent variable in
some auxiliary vector-valued function space; in our version the connection between this auxiliary space and the original Hilbert space will be explicit (Theorem \ref{thm4.5}).  Using the operator colligation setting for our problem, we also obtain an explicit expression for the completely non-selfadjoint part of the operator (Theorem \ref{thm:langer}) and an operator analytic proof of the famous result by Sz.-Nagy-Foias on the pure absolute continuity of the spectrum of the minimal selfadjoint dilation (Theorem \ref{thm:Foias}). In the final section of the paper, we consider an example of a limit circle Sturm-Liouville operator.

Throughout the paper we use the following notation: For a complex number $z\in\C$, let $\Im z$ denote its imaginary part and 
$\C_+=\{z\in\C:\Im z>0\}$, $\C_- =\{z\in\C:\Im z<0\}$. The positive half-line will be denoted by $\R_+$. For an operator $A$ in a Hilbert space $H$, we denote its range by $\Ran A$, its kernel by $\ker(A)$, its adjoint by $A^*$ and its spectrum and resolvent set by $\sigma(A)$ and $\rho(A)$, respectively. {The inner product on $H$ will be linear in the first component.} The set of bounded linear operators in $H$ is denoted $B(H)$.  The  Lebesgue space of square-integrable functions  on the half-line is $L_2(\R_+)$, while $H^s(\R_+)$ denotes the usual Sobolev space of order $s$;  $H^s_0(\R_+)$ denotes the closure in $H^s$-norm of the smooth, compactly supported functions on the half-line.

\section{Preliminaries}
This section reviews some classical results on dissipative operators - for more on the subject, we refer the reader to \cite{HP57,LP67,SFBK10} - and results from our previous paper \cite{BMNW20}, where most of the proofs can be found. We start with some basic definitions.

\begin{definition}
A densely defined linear operator $A$ with domain $D(A)$ in a Hilbert space $H$ is called \textit{dissipative} if $\Im\llangle Ah,h\rrangle \geq 0$ for all $h\in D(A)$. $A$ is called \textit{anti-dissipative} if $(-A)$ is dissipative.
Dissipative operators which have no non-trivial dissipative extensions are called \textit{maximally dissipative operators} (MDO).
\end{definition}

The Cayley transform, an operator version of the M\"{o}bius transform, defined by
\be\label{Cayley} T=I-2i(A+i)^{-1} = (A-iI)(A+iI)^{-1}\ee
is a bijective map between the class of MDOs and contractions that do not have $1$ as an eigenvalue. Thus many results for MDOs can be obtained from studying contractions, and vice versa.

We next introduce reducing subspaces and the concept of complete non-selfadjointness.
\begin{definition}
Let $A$ be an operator on a Hilbert space $H$, $H_1\subseteq H$ a subspace
and $P_{H_1}$ the orthogonal projection of $H$ onto $H_1$.
The subspace $H_1$
 is \textit{invariant} with respect to $A$ if $P_{H_1}D(A)\subseteq D(A)$ and $AP_{H_1}h\in H_1$ for all $h\in D(A)$.
It is a \textit{reducing subspace} for $A$ if both $H_1$ and $H\ominus H_1$ are invariant with respect to $A$.
\end{definition}

\begin{definition} Let $A$ be an MDO.  $A$ is \textit{completely non-selfadjoint (cns)} if there exists no reducing subspace $H_1\subseteq H$ such that $A\vert_{H_1}$ is selfadjoint.
\end{definition}

{The Langer decomposition \cite{BMNW20,Lan61,Nab81}
gives an explicit formula for the completely non-selfadjoint part of the operator. In the case of relatively bounded imaginary part the formula is simple. For more general situations it involves operators which are regularisations of the (possibly non-existing) imaginary part of the operator. In our setting, we will determine a more explicit formula for the completely non-selfadjoint part of an MDO in Theorem \ref{thm:langer}.}

\begin{proposition}[Sz.-Nagy]\label{dilation}
 For any MDO $A$ on a Hilbert space $H$ there exists a selfadjoint operator $\cL$ on a Hilbert space $\cH\supseteq H$ such that
 \beq
e^{it A}=P_H e^{it\cL}\vert_H,\ t\geq 0 \quad \hbox{ or equivalently } \quad (A-\lambda)^{-1}=P_H (\cL-\lambda)^{-1}\vert_H,\quad \lambda\in\C_-.
\eeq
Moreover, $(A^*-\lambda)^{-1}=P_H (\cL-\lambda)^{-1}\vert_H$ for $\lambda\in\C_+$.
The operator $\cL$ is called a \textit{selfadjoint dilation} of $A$.
\end{proposition}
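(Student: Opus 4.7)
The plan is to reduce this classical statement to Sz.-Nagy's unitary dilation theorem for contractions via the Cayley transform (\ref{Cayley}). Since $A$ is maximally dissipative, $T=(A-iI)(A+iI)^{-1}$ is a contraction on $H$ with $1\notin\sigma_p(T)$. By Sz.-Nagy's theorem for contractions, there exist a Hilbert space $\cH\supseteq H$ and a unitary $U$ on $\cH$ such that $T^n=P_H U^n|_H$ for every $n\geq 0$; after passing to the minimal such dilation one may arrange that $1$ is not an eigenvalue of $U$. Taking the inverse Cayley transform $\cL=i(I+U)(I-U)^{-1}$ then yields a densely defined selfadjoint operator on $\cH$, the candidate dilation.

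Next I would transport the discrete compression identity $T^n=P_HU^n|_H$ into the resolvent and semigroup identities. For $\lambda\in\C_-$, the function $z\mapsto(\varphi^{-1}(z)-\lambda)^{-1}$, where $\varphi$ is the M\"{o}bius transform in (\ref{Cayley}), extends continuously to $\overline{\mathbb D}$ and is given there by a norm-convergent series in nonnegative powers of $z$ (with uniform control since $\lambda$ is bounded away from the spectrum of $\varphi^{-1}(z)$ for $|z|\le 1$). Applying this series to $T$ and to $U$ and using $T^n=P_HU^n|_H$ gives $(A-\lambda)^{-1}=P_H(\cL-\lambda)^{-1}|_H$ for $\lambda\in\C_-$. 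The semigroup identity then follows either by the Hille--Yosida approximation $e^{itA}=\lim_{n\to\infty}(I-itA/n)^{-n}$ applied on both sides, or by noting that the two statements are Laplace transforms of one another via
\beq
(\cL-\lambda)^{-1}h=i\int_0^\infty e^{-i\lambda t}e^{it\cL}h\,dt,\qquad \lambda\in\C_-,
\eeq
together with the analogous identity on $H$ (recalling that $iA$ generates the contractive $C_0$-semigroup $\{e^{itA}\}_{t\ge 0}$ by Lumer--Phillips). The claim for $A^*$ with $\lambda\in\C_+$ is then obtained by taking adjoints, using $\cL^*=\cL$ and $(\cL-\lambda)^{-*}=(\cL-\bar\lambda)^{-1}$.

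The main obstacle is really the Sz.-Nagy dilation theorem for contractions itself; if taken as a black box, what remains is bookkeeping around the Cayley transform and the verification that minimality preserves the absence of $1$ as an eigenvalue of $U$ (equivalently, dense definedness of $\cL$). A self-contained alternative in the spirit of this paper would be to carry out the direct colligation construction of \cite{BMNW20}, where $\cH$ is realised as $L_2(\R_-;\K_-)\oplus H\oplus L_2(\R_+;\K_+)$ with $\cL$ built from differentiation on the function components coupled to $A$ through a Lagrange identity; the resolvent compression is then verified by solving $(\cL-\lambda)u=h$ componentwise. Either route makes the main technical point clear: one needs the minimality/non-eigenvalue condition to guarantee that the inverse Cayley transform is an honest selfadjoint operator rather than merely a symmetric relation.
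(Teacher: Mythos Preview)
The paper does not prove this proposition; it is stated as a classical result of Sz.-Nagy with a reference to \cite{SFBK10}, so there is no ``paper's proof'' to compare against. Your Cayley-transform reduction is the standard textbook route and is essentially correct. In particular, your power-series argument is clean: for fixed $\lambda\in\C_-$ the function $z\mapsto(\varphi^{-1}(z)-\lambda)^{-1}=(1-z)/\bigl((i+\lambda)z+(i-\lambda)\bigr)$ has its only pole at $z=(\lambda-i)/(\lambda+i)$, which lies strictly outside $\overline{\mathbb D}$, so the Taylor series converges absolutely there and the identity $T^n=P_HU^n|_H$ transfers termwise to give the resolvent compression.

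You correctly identify the one genuine subtlety, namely that $1\notin\sigma_p(U)$ for the \emph{minimal} dilation (so that $\cL=i(I+U)(I-U)^{-1}$ is an honest densely defined selfadjoint operator). This is true but not entirely trivial: it follows from the decomposition $T=T_u\oplus T_{cnu}$ into unitary and completely non-unitary parts, since the minimal dilation of $T$ is then $T_u\oplus U_{cnu}$ and the minimal dilation $U_{cnu}$ of a c.n.u.~contraction has purely absolutely continuous spectrum (this is precisely the discrete analogue of Theorem~\ref{thm:Foias}). A way to bypass this point altogether is to invoke the continuous-parameter Sz.-Nagy theorem directly: the contractive $C_0$-semigroup $\{e^{itA}\}_{t\ge 0}$ admits a unitary-group dilation $\{V(t)\}_{t\in\R}$, and Stone's theorem then hands you the selfadjoint generator $\cL$ without any Cayley bookkeeping.

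Your final paragraph is on target: the explicit translation-form construction of \cite{BMNW20}, recalled here as Proposition~\ref{prop1}, is exactly how the present paper realises the dilation concretely, and is the version actually used in the subsequent arguments.
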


{The selfadjoint dilation is a very useful tool in studying an MDO $A$. By decomposing $A$ into its selfadjoint and completely non-selfadjoint parts, it is sufficient to construct a selfadjoint dilation of the completely non-selfadjoint part to obtain a selfadjoint dilation for $A$. The next lemma, whose proof illustrates the use of the selfadjoint dilation,  will be needed later on.}

\begin{lemma} \label{lemstrong}
Let   $A$ be a maximally dissipative  operator in Hilbert space. Then for any $k\in \Rr $
$$ i \tau (A+k+ i \tau)^{-1} \overset{s}{ \to I}$$
in the  strong  operator  topology as $\tau\to +\infty$.
\end{lemma}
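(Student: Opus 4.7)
The plan is to leverage Proposition \ref{dilation} (as hinted in the text), reducing everything to the analogous statement for a selfadjoint operator, where the conclusion is classical.

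First, since adding a real constant $k$ to an MDO yields another MDO (the imaginary part of the quadratic form is unaffected), it suffices to prove the claim for $k = 0$. Concretely, I would apply the general statement to the MDO $A + k$ and note that $(A + k + i\tau)^{-1}$ is precisely $(\widetilde{A} + i\tau)^{-1}$ for $\widetilde{A} := A + k$.

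Next, for $\tau > 0$ the point $-i\tau$ lies in $\mathbb{C}_-$, so Proposition \ref{dilation} applies and yields
\[
(A + i\tau)^{-1} h \;=\; P_H (\cL + i\tau)^{-1} h \qquad (h \in H),
\]
where $\cL$ is a selfadjoint dilation of $A$ on $\cH \supseteq H$. Multiplying by $i\tau$,
\[
i\tau (A + i\tau)^{-1} h \;=\; P_H \bigl[\, i\tau (\cL + i\tau)^{-1} h \,\bigr].
\]
Since $P_H h = h$ for $h \in H$ and $P_H$ is a contraction, it suffices to prove that
\[
i\tau (\cL + i\tau)^{-1} h \;\longrightarrow\; h \quad \text{in } \cH \text{ as } \tau \to +\infty
\]
for every $h \in H$ (in fact for every $h \in \cH$).

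This last statement is the Yosida-type convergence for selfadjoint operators and follows from the spectral theorem. If $\{E(\mu)\}_{\mu \in \mathbb{R}}$ denotes the spectral resolution of $\cL$, then
\[
\bigl\| i\tau (\cL + i\tau)^{-1} h - h \bigr\|_\cH^2 \;=\; \int_\mathbb{R} \left| \frac{i\tau}{\mu + i\tau} - 1 \right|^2 d\langle E(\mu) h, h\rangle_\cH \;=\; \int_\mathbb{R} \frac{\mu^2}{\mu^2 + \tau^2}\, d\langle E(\mu) h, h\rangle_\cH .
\]
The integrand is bounded by $1$ and converges pointwise to $0$ as $\tau \to +\infty$; since $\langle E(\cdot) h, h \rangle_\cH$ is a finite positive measure of total mass $\|h\|^2$, dominated convergence yields the claim. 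Composing with $P_H$ preserves the limit, finishing the proof.

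The main (very mild) obstacle is simply checking that the dilation formula from Proposition \ref{dilation} is applicable at the spectral parameter $\lambda = -i\tau$ and handling the real shift $k$; the selfadjoint convergence step itself is entirely standard.
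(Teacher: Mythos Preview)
Your proof is correct and follows essentially the same route as the paper's own argument: reduce to $k=0$, invoke the selfadjoint dilation $\cL$ from Proposition~\ref{dilation} to rewrite $i\tau(A+i\tau)^{-1}h$ as $P_H[i\tau(\cL+i\tau)^{-1}h]$, and then use the spectral theorem together with dominated convergence to conclude. The paper organises the final estimate as $\|P_H[i\tau(\cL+i\tau)^{-1}-I]h\|_H^2 \le \int_{\Rr}|t/(t+i\tau)|^2\,d(E_t h,h)_{\cH}$, which is exactly your computation.
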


\begin{proof}
Since $(A+k)$  is also a maximally dissipative operator, the scalar  operator $kI$ can be absorbed in $A$. So without loss of generality $k=0$. Introducing the selfadjoint dilation $\mathcal L$ on Hilbert space $\mathcal H \supset H$ such that
$$(A+\lambda)^{-1}=P_H (\mathcal L+\lambda)^{-1}|_H   
 \quad\hbox{ for all }\  \lambda \in \ \mathbb C_+,
$$
where $P_H$ is the orthogonal projection of $\mathcal H$ onto $H$, we have
$$(i\tau) (A+ i\tau)^{-1}  = P_H (i \tau)(\mathcal L + i \tau)^{-1}  \quad\hbox{ for all }\  \tau>0.
$$
Therefore, for any $h\in H$
\begin{eqnarray*}
\norm{ i \tau (A+i\tau)^{-1}h-h}^2_H &=&
 \norm{ P_H[  i \tau (\mathcal L + i \tau)^{ -1}-I]h
}_H^2 \\
&\leq&\norm{   (\mathcal L
(\mathcal L    + i\tau)^{-1} h}_{\mathcal H}^2=\int_\Rr \left| \dfrac{t}{t+i\tau}\right|^2 d (E_t h,h)_\mathcal{H}
\end{eqnarray*}
with  $E_t$   the spectral resolution of $\mathcal L $  on $\mathcal H$.  Using the Lebesgue dominated convergence theorem and the trivial facts that
$|\dfrac t{t+i \tau}|^2 \leq 1$  and $ \dfrac t{t+i\tau} \to 0 $ as $ \tau\to \infty$,   for all  $t \in \Rr$,
we have $\norm  {  i \tau (A+i\tau)^{-1}h-h }_H\to 0$ as $\tau \to \infty$.
\end{proof}

We now discuss  an abstract framework for a  maximally dissipative operator and its anti-dissipative adjoint which allows us to introduce $\Gamma$-operators associated with the imaginary part of the operator $A$. For the case of bounded operators this goes back to the work of the Odessa school on operator colligations \cite{Bro71}, see also \cite{Str60}. The following is \cite[Lemma 3.1]{BMNW20}.

\begin{lemma}\label{lem:Lagrange}
 Let $A$ be a maximally dissipative operator on a Hilbert space $H$. Then there exists a Hilbert space $E$ and an operator $\Gamma:D(A)\to E$ which is bounded in the graph norm of $A$, has dense range in $E$ and such that for all $u,v\in D(A)$ we have
 \be \label{eq:Lagrange} \llangle Au,v\rrangle_H - \llangle u, Av\rrangle_H = i \llangle \Gamma u,\Gamma v\rrangle_{E}. \ee
 Similarly, there exists a Hilbert space $E_*$ and an operator $\Gamma_*:D(A^*)\to E_*$ which is bounded in the graph norm, has dense range in $E_*$ and such that for all $u,v\in D(A^*)$ we have
 \be \label{eq:LagrangeStar}\llangle A^*u,v\rrangle_H - \llangle u, A^*v\rrangle_H = -i \llangle \Gamma_* u,\Gamma_* v\rrangle_{E_*}.
 \ee
 \end{lemma}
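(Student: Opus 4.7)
The natural approach is to read the quantity $\langle Au,v\rangle_H-\langle u,Av\rangle_H$ itself as a (scaled) inner product, and then construct $E$ as the completion of a suitable quotient of $D(A)$.

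Concretely, I would define the sesquilinear form
\[
q(u,v) := -i\bigl(\llangle Au,v\rrangle_H-\llangle u,Av\rrangle_H\bigr),\qquad u,v\in D(A).
\]
Setting $u=v$ gives $q(u,u)=2\,\Im\llangle Au,u\rrangle_H\ge 0$ by dissipativity of $A$, so $q$ is a nonnegative hermitian sesquilinear form on $D(A)$. The Cauchy--Schwarz inequality applied to $q$ plus the trivial bound
\[
|q(u,v)|\le \|Au\|\,\|v\|+\|u\|\,\|Av\|\le 2\,\|u\|_A\|v\|_A
\]
(where $\|\cdot\|_A$ is the graph norm) shows $q$ is continuous in the graph norm.

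Next I would let $N:=\{u\in D(A):q(u,u)=0\}$, which is a subspace of $D(A)$ by Cauchy--Schwarz for $q$. On $D(A)/N$ the form $q$ descends to an inner product; let $E$ be its Hilbert space completion, and define $\Gamma:D(A)\to E$ to be the composition of the quotient map with the canonical embedding into $E$. Then $\Gamma$ has dense range in $E$ by construction, is bounded in the graph norm by the estimate above, and by definition satisfies
\[
\llangle \Gamma u,\Gamma v\rrangle_E = q(u,v)=-i\bigl(\llangle Au,v\rrangle_H-\llangle u,Av\rrangle_H\bigr),
\]
which rearranges to \eqref{eq:Lagrange}. The construction of $\Gamma_*$ for $A^*$ is identical, working with the form
\[
q_*(u,v):=i\bigl(\llangle A^*u,v\rrangle_H-\llangle u,A^*v\rrangle_H\bigr),\qquad u,v\in D(A^*),
\]
which is nonnegative because $A^*$ is anti-dissipative (its graph domain is closed since $A$ is maximal, so $A^*$ is closed).

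There is no real obstacle: nonnegativity of $q$ is the only point that uses dissipativity, and the quotient-plus-completion construction handles the possibility that $q$ is degenerate. The mildly delicate step is verifying that the abstract form $q$ genuinely yields an operator $\Gamma$ into a \emph{Hilbert space} (rather than just a pre-Hilbert quotient); this is secured by taking the completion, after which density of $\Gamma(D(A))$ in $E$ is automatic. The maximal dissipativity of $A$ is not needed for the construction of $\Gamma$ itself, but enters through the closedness of $A$ (so the graph norm on $D(A)$ is complete) and through the existence of $A^*$ with the anti-dissipativity needed for $\Gamma_*$.
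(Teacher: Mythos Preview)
Your argument is correct: the quotient-plus-completion construction from the nonnegative hermitian form $q(u,v)=-i(\langle Au,v\rangle-\langle u,Av\rangle)$ is exactly the right way to produce $E$ and $\Gamma$, and your verification of nonnegativity, hermiticity, and graph-norm boundedness is clean. Note that the present paper does not actually supply a proof of this lemma---it is quoted as \cite[Lemma~3.1]{BMNW20}---so there is no in-paper argument to compare against; the remark following the statement does indicate a concrete alternative realisation via the Cayley transform, $\Gamma=(I-T^*T)^{1/2}(A+i)^{-1}$, which lands in a subspace of $H$ rather than in an abstract completion, but your construction is the canonical one and is equivalent up to the unitary freedom the paper mentions.
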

We note that, in general, the dimensions of $E$ and $E_*$ need not coincide. The operator $\Gamma$ is determined up to unitary transformations, see \cite[Lemma 3.3]{BMNW20} and \cite{Str60}. {In particular, choosing $\Gamma = Q(A+i)^{-1}$ and $\Gamma_* = Q_*(A^*-i)^{-1}$, where $Q = (I-T^*T)^{1/2}$, $Q_* = (I-TT^*)^{1/2}$, and $T = (A-i)(A+i)^{-1}$ is
the Cayley transform of $A$, gives a mapping between the results presented here and those in \cite{NikVas98}. However in many concrete applications - see, e.g., Section \ref{section:5} below - 
(\ref{eq:Lagrange}) and (\ref{eq:LagrangeStar}) effectively reduce to integrations by parts, with much simpler canonical choices for $\Gamma$ and $\Gamma_*$.}

We require two abstract Green identities \cite[Lemma 3.5]{BMNW20}.

\begin{lem}\label{lem:Green} For $\lambda\in\C_+$ and $\mu\in\C_-$ we have
\be (A+\lambda)^{-1}-(A^*+\mu)^{-1} +(\lambda-\mu)(A^*+\mu)^{-1}(A+\lambda)^{-1} =  
 -i(\Gamma(A+\overline{\mu})^{-1})^*(\Gamma(A+\lambda)^{-1}) \label{Green1} \ee
and
\be
(A+\lambda)^{-1}-(A^*+\mu)^{-1} +(\lambda-\mu)(A+\lambda)^{-1} (A^*+\mu)^{-1}=  
 -i(\Gamma_*(A^*+\overline{\lambda})^{-1})^*(\Gamma_*(A^*+\mu)^{-1})
\label{Green2}
\ee
\end{lem}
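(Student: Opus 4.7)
The plan is to derive each identity by testing the corresponding Lagrange identity from Lemma \ref{lem:Lagrange} against elements in the range of appropriate resolvents, and then reading off the resulting scalar identity as an operator identity in weak form.

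For \eqref{Green1} I would set $u=(A+\lambda)^{-1}f$ and $v=(A+\overline{\mu})^{-1}g$ for arbitrary $f,g\in H$. Since $\lambda\in\C_+$ and $\overline{\mu}\in\C_+$ both lie in the resolvent set of $-A$ (because the spectrum of the dissipative operator $A$ sits in $\overline{\C_+}$), both $u$ and $v$ are well-defined elements of $D(A)$, so the Lagrange identity \eqref{eq:Lagrange} applies. The substitutions $Au=f-\lambda u$ and $Av=g-\overline{\mu}v$, combined with conjugate-linearity in the second slot, give
$$ \llangle f,v\rrangle - \llangle u,g\rrangle - (\lambda-\mu)\llangle u,v\rrangle = i\llangle \Gamma u,\Gamma v\rrangle. $$
Using $\bigl((A+\overline{\mu})^{-1}\bigr)^{*}=(A^{*}+\mu)^{-1}$ to shift resolvents from $v$ onto $f$, and similarly packaging the last term as $\llangle(\Gamma(A+\overline{\mu})^{-1})^{*}\Gamma(A+\lambda)^{-1}f,g\rrangle$, every term takes the shape $\llangle Tf,g\rrangle$ for some bounded operator $T$. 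Since $f,g$ are arbitrary, one reads off the operator identity \eqref{Green1} after moving the sign.

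For \eqref{Green2} I would repeat the procedure with the second Lagrange identity \eqref{eq:LagrangeStar}, now choosing $u=(A^{*}+\mu)^{-1}f$ and $v=(A^{*}+\overline{\lambda})^{-1}g$. Since $\mu\in\C_-$ and $\overline{\lambda}\in\C_-$ lie in the resolvent set of $-A^{*}$ (the spectrum of the anti-dissipative operator $A^{*}$ lies in $\overline{\C_-}$), both vectors belong to $D(A^{*})$. The identical bookkeeping — with the sign $-i$ on the right-hand side of \eqref{eq:LagrangeStar} producing the $-i$ appearing in \eqref{Green2}, and the order of operators swapped because $u$ involves $\mu$ and $v$ involves $\overline{\lambda}$ — delivers the second Green identity after using $\bigl((A^{*}+\overline{\lambda})^{-1}\bigr)^{*}=(A+\lambda)^{-1}$.

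The only genuine subtlety, and what I expect to be the main bookkeeping obstacle, is keeping track of conjugations correctly: since the inner product is linear in the first and conjugate-linear in the second slot, one must be careful that, for example, $\llangle u,\overline{\mu}v\rrangle = \mu\llangle u,v\rrangle$, so that the factor $(\lambda-\mu)$ (rather than $(\lambda-\overline{\mu})$ or similar) emerges in \eqref{Green1}. Once this is handled, the two identities are simply the Lagrange identities rewritten in sesquilinear-form language.
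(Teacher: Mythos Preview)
Your proposal is correct and is exactly the natural argument: plug resolvent vectors into the Lagrange identities \eqref{eq:Lagrange} and \eqref{eq:LagrangeStar}, expand, and read off the operator identity from the weak form. The paper itself does not reproduce a proof here but simply cites \cite[Lemma~3.5]{BMNW20}, where the same computation is carried out; your bookkeeping of the conjugations (in particular that $\llangle u,\overline{\mu}v\rrangle=\mu\llangle u,v\rrangle$ produces the factor $(\lambda-\mu)$, and that $((A+\overline{\mu})^{-1})^{*}=(A^{*}+\mu)^{-1}$) is exactly what is needed.
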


\mm{A key ingredient in all functional models is a {\em characteristic function}, see \cite{Liv73}. 
We next introduce the characteristic function which we first presented in \cite[Corollary 4.2 \& Lemma 4.3]{BMNW20}.}

\begin{lemma} \label{corollary:2.2} 
Let  $z\in \C_+$. There exists a unique contraction $S(z):E\to E_*$, analytic in the upper half-plane, such that
\be S(z)\Gamma u = \Gamma_*(A^*-z)^{-1}(A-z)u \hbox{ for all } u\in D(A).\label{eq:S} \ee
Correspondingly, for $z\in\C_-$ there exists a contraction $S_*(z):E_*\to E$, analytic in the lower half-plane, such that
\be S_*(z)\Gamma_* u = \Gamma (A-z)^{-1}(A^*-z)u. \label{Sstar} \ee
\end{lemma}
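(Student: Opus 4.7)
The plan is to establish, for $u\in D(A)$ and $z\in\C_+$, the key identity
\begin{equation*}
\|\Gamma u\|_E^2 - \|\Gamma_* w\|_{E_*}^2 = 2\,\Im z\,\|u-w\|_H^2, \qquad w:=(A^*-z)^{-1}(A-z)u\in D(A^*).
\end{equation*}
Since $\Im z>0$, this single identity simultaneously gives well-definedness of the prescription $\Gamma u\mapsto \Gamma_* w$ on $\Ran\Gamma$ (if $\Gamma u=0$, then $\Gamma_* w=0$) and contractivity $\|\Gamma_* w\|_{E_*}\le \|\Gamma u\|_E$. As $\Ran\Gamma$ is dense in $E$ by Lemma \ref{lem:Lagrange}, the map extends uniquely by continuity to a contraction $S(z):E\to E_*$ satisfying (\ref{eq:S}).

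To derive the identity, I combine the two Lagrange identities: (\ref{eq:Lagrange}) with $v=u$ gives $\|\Gamma u\|_E^2=2\Im\langle Au,u\rangle$, while (\ref{eq:LagrangeStar}) with $v=w$ gives $\|\Gamma_* w\|_{E_*}^2=-2\Im\langle A^*w,w\rangle$. The defining relation rewrites as $A^*w=Au+z(w-u)$, so
\begin{equation*}
\Im\langle A^*w,w\rangle=\Im\langle Au,w\rangle+\Im(z)\|w\|^2-\Im(z\langle u,w\rangle).
\end{equation*}
The missing link between $\langle Au,u\rangle$ and $\langle Au,w\rangle$ comes from the adjoint pairing $\langle w,Au\rangle=\langle A^*w,u\rangle$, valid because $u\in D(A)$ and $w\in D(A^*)$. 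Substituting $A^*w=Au+z(w-u)$ into the right-hand side and taking imaginary parts yields
\begin{equation*}
\Im\langle Au,u\rangle+\Im\langle Au,w\rangle=\Im(z)\|u\|^2-\Im(z\langle w,u\rangle).
\end{equation*}
Assembling these pieces and using $\Im(z\langle u,w\rangle)+\Im(z\langle w,u\rangle)=2\Im(z)\Re\langle u,w\rangle$, the cross terms collapse into $2\Im(z)(\|u\|^2-2\Re\langle u,w\rangle+\|w\|^2)=2\Im(z)\|u-w\|^2$, which is the asserted identity.

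Analyticity of $z\mapsto S(z)$ on $\C_+$ follows from norm analyticity of $(A^*-z)^{-1}$ there: for each fixed $u\in D(A)$, both $w(z)$ and $A^*w(z)=(A-z)u+zw(z)$ are $H$-analytic, so $w(z)$ is analytic in the graph norm of $A^*$, and since $\Gamma_*$ is graph-bounded, $z\mapsto S(z)\Gamma u=\Gamma_* w(z)$ is $E_*$-analytic on the dense subspace $\Ran\Gamma$; combined with the uniform bound $\|S(z)\|\le 1$, this yields analyticity of $S$ as a $B(E,E_*)$-valued function. The construction of $S_*(z)$ for $z\in\C_-$ is entirely symmetric, with the roles of $(A,\Gamma,E)$ and $(A^*,\Gamma_*,E_*)$ interchanged; the mirror identity $\|\Gamma y\|_E^2-\|\Gamma_* v\|_{E_*}^2=2\Im(z)\|v-y\|_H^2$ with $v\in D(A^*)$ and $y=(A-z)^{-1}(A^*-z)v$ now has nonpositive right-hand side because $\Im z<0$, yielding the required contractivity in the opposite direction. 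I expect the main obstacle to be the bookkeeping that produces the key identity: the clean collapse of the cross terms into the square $\|u-w\|^2$ is what drives the whole argument, and tracking signs across the two Lagrange identities and the mixed adjoint pairing is the only step where one can easily go astray.
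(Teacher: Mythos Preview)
Your proof is correct. The key identity $\|\Gamma u\|_E^2 - \|\Gamma_* w\|_{E_*}^2 = 2\,\Im z\,\|u-w\|_H^2$ is derived cleanly, and the contractivity, well-definedness, extension by density, and analyticity arguments are all sound. The paper itself does not prove this lemma here but cites \cite[Corollary 4.2 \& Lemma 4.3]{BMNW20}; your argument is precisely the standard one underlying that reference, so there is no meaningful divergence to report.
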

{The characteristic function $S(z)$ can be extended on $\Ran(\Gamma)$ by \eqref{eq:S} to all $z\in\rho(A^*)$ and  $S_*(z)$ can be extended on $\Ran(\Gamma_*)$ by \eqref{Sstar} to all $z\in\rho(A)$ (\cite[Lemma 4.4]{BMNW20}).
The operator-valued function $S(\cdot)$, defined for $z\in\rho(A^*)$  by \eqref{eq:S} on $\Ran(\Gamma)$ and extended to $E$ by continuity is called the \textit{\v{S}traus characteristic function} of the operator $A$.}

{Finally, we gather some useful facts about the characteristic function in a lemma. The proofs can be found in \cite[Section 4]{BMNW20}.
\begin{lemma}\label{lemma:adj}
\begin{enumerate}
	\item For $\mu,\mut\in\rho(A^*)$, we have the following identity:
\be\label{eq:Sdiff} S(\mu)-S(\mut)= i(\mu-\mut) \left(\Gamma_* (A^*-\mu)^{-1}\right)\left(\Gamma(A-\mutb)^{-1}\right)^* \hbox{ on } E.\ee
\item $S(z)=S_*^*(\zb)$ for $z \in \rho(A^*)$.
\item $S(z)S_*(z)=I_{E_*}$ and $S_*(z)S(z)=I_E$ whenever  $z \in \rho(A)\cap\rho(A^*)$.
\item $S(z)$ is unitary for $z\in\R\cap\rho(A)$.
\item If $\sigma(A)$ does not cover the whole upper half plane (or, equivalently, if $\rho(A)\cap\rho(A^*)\not=\emptyset$), then $\dim E=\dim E_*$.
\item  For  $w,z \in \C_+$, we have
\be\label{eq:SStarS1}\frac{1}{\bar{w}-z}\left(I_E-S^*(w)S(z)\right)=i \left(\Gamma(A-\bar{w})^{-1}\right) \left(\Gamma(A-\bar{z})^{-1}\right)^*\ee
and for $w,z \in \C_-$, we have
\be\label{eq:SStarS2}\frac{1}{\bar{w}-z}\left(I_{E_*}-S_*^*(w)S_*(z)\right)= - i \left(\Gamma_*(A^*-\bar{w})^{-1}\right) \left(\Gamma_*(A^*-\bar{z})^{-1}\right)^*.\ee
\item For any $u\in H$, $\mu,z\in\C_-$ we have
\be\label{eq:Sgammastar}
\left(\Gamma_*(A^*-\bar{\mu})^{-1}\right)^*S(\bar{z}) = \left[I-(\bar{z}-\mu)(A-\mu)^{-1}\right]\left(\Gamma(A-z)^{-1}\right)^*
\ee
and
\be\label{eq:Stildegammastar}
\left(\Gamma(A-\mu)^{-1}\right)^*S_*(z) = \left[I-(z-\bar{\mu})(A^*-\bar{\mu})^{-1}\right]\left(\Gamma_*(A^*-\bar{z})^{-1}\right)^*.
\ee
\end{enumerate}
\end{lemma}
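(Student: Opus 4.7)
The plan is to verify each of the seven items by direct calculation from the definitions, using only: the Lagrange identities \eqref{eq:Lagrange}, \eqref{eq:LagrangeStar}; the Green identities \eqref{Green1}, \eqref{Green2}; the defining relations \eqref{eq:S}, \eqref{Sstar}; and the resolvent identity. Each identity is first established on the dense subsets $\Ran\Gamma \subset E$ or $\Ran\Gamma_* \subset E_*$ and then extended by continuity, using that $S(z)$ and $S_*(z)$ are bounded contractions.

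The linchpin of the whole lemma is the ``workhorse'' identity
\[
(A^*-\mut)^{-1}(A-\mut)u - u \;=\; i\bigl(\Gamma(A-\mutb)^{-1}\bigr)^*\Gamma u \qquad (u \in D(A)),
\]
which follows by pairing both sides against arbitrary $v\in H$ and applying the Lagrange identity \eqref{eq:Lagrange} to $u$ and $w := (A-\mutb)^{-1}v \in D(A)$: the two terms in $\mut$ cancel (being careful that the inner product is conjugate-linear in the second slot, so $\langle u,\mutb w\rangle = \mut\langle u,w\rangle$), leaving only the $\Gamma$-term. Combined with the resolvent identity $(A^*-\mu)^{-1} - (A^*-\mut)^{-1} = (\mu-\mut)(A^*-\mu)^{-1}(A^*-\mut)^{-1}$, this gives (1) immediately: applying both sides of \eqref{eq:Sdiff} to $\Gamma u$ and using \eqref{eq:S} twice reduces $S(\mu)\Gamma u - S(\mut)\Gamma u$ to $(\mu-\mut)\Gamma_*(A^*-\mu)^{-1}\bigl[(A^*-\mut)^{-1}(A-\mut)u - u\bigr]$, and the workhorse identity converts the bracket to $i(\Gamma(A-\mutb)^{-1})^*\Gamma u$.

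Item (2) follows from a cross-pairing argument: for $u \in D(A)$, $v \in D(A^*)$, compute $\langle S(z)\Gamma u,\Gamma_* v\rangle_{E_*}$ via \eqref{eq:S} and $\langle\Gamma u,S_*(\bar z)\Gamma_* v\rangle_E$ via \eqref{Sstar}, and show that both reduce to the same sesquilinear form after using the two Lagrange identities in tandem. With (1) and (2) secured, items (3)--(5) are essentially formal: (3) results from substituting \eqref{Sstar} into \eqref{eq:S} and telescoping with the resolvent identity so that $S(z)S_*(z)\Gamma_* u$ collapses to $\Gamma_* u$ modulo a cancellation; (4) combines (2) and (3), since for real $z \in \rho(A)\cap\rho(A^*)$ the adjoint relation gives $S^*(z) = S_*(\bar z) = S_*(z)$, so (3) yields both $S(z)S^*(z) = I_{E_*}$ and $S^*(z)S(z) = I_E$; and (5) follows immediately from (3), since $S(z)\colon E\to E_*$ and $S_*(z)\colon E_*\to E$ are mutual inverses whenever $\rho(A)\cap\rho(A^*)\neq\emptyset$.

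Items (6) and (7) are then rearrangements of (1), (2), and the workhorse identity. For (6), pair both sides of \eqref{eq:SStarS1} against $\Gamma v$ for $u,v \in D(A)$, rewrite $S^*(w) = S_*(\bar w)$ via (2), expand using \eqref{eq:S} and \eqref{Sstar}, and invoke the Lagrange identity \eqref{eq:Lagrange} on the resulting bilinear expression to collapse to the right-hand side; \eqref{eq:SStarS2} is obtained by the mirror calculation with $A^*$, $\Gamma_*$ in place of $A$, $\Gamma$ and \eqref{eq:LagrangeStar} in place of \eqref{eq:Lagrange}. For (7), take the adjoint of \eqref{eq:S} composed with $(\Gamma_*(A^*-\bar\mu)^{-1})^*$ and use the workhorse identity together with the resolvent identity $(A-z)^{-1} - (A-\mu)^{-1} = (z-\mu)(A-\mu)^{-1}(A-z)^{-1}$ (rewritten as $I - (\bar z - \mu)(A-\mu)^{-1}$ inside the appropriate product) to obtain \eqref{eq:Sgammastar}; \eqref{eq:Stildegammastar} is the symmetric statement. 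The main obstacle is the bookkeeping of signs, conjugates and domains in establishing the workhorse identity and item (1); once these are pinned down cleanly, every subsequent item is a routine manipulation along the same lines.
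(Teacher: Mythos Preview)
The paper does not actually prove this lemma in-text; it collects these facts with the remark that ``the proofs can be found in \cite[Section 4]{BMNW20}''. There is therefore no in-paper argument against which to compare your proposal directly.

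That said, your sketch is sound and follows the natural route. The ``workhorse identity'' you isolate,
\[
(A^*-\mut)^{-1}(A-\mut)u - u \;=\; i\bigl(\Gamma(A-\mutb)^{-1}\bigr)^*\Gamma u\qquad(u\in D(A)),
\]
is indeed the algebraic heart of the matter, and your derivation of item (1) from it via the resolvent identity is correct. A couple of minor points: item (3) is even simpler than you suggest---no telescoping is needed, since for $z\in\rho(A)\cap\rho(A^*)$ one has directly
\[
S(z)S_*(z)\Gamma_* u = S(z)\Gamma\bigl[(A-z)^{-1}(A^*-z)u\bigr] = \Gamma_*(A^*-z)^{-1}(A-z)(A-z)^{-1}(A^*-z)u = \Gamma_* u,
\]
and similarly for $S_*(z)S(z)$. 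For item (6), an alternative to your pairing computation is to take the adjoint of (1) and post-compose with $S(z)$, which is slightly less bookkeeping; but your route works as well. None of these are gaps---the proposal is correct, merely sketched in places.
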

}

{\section{Absolute continuity of the spectrum}}
    
    We start with the following important fact to be used frequently over the paper. It is a generalisation of \cite[Theorem 1]{Nab81}  for the case of  general maximally dissipative  operators  and is  in the spirit of operator colligations.
\begin{theorem} \label{thm:2.1}
Let $A$ be a maximally dissipative positive operator in $H$. Then
$$\sup_{\eps >0} \int_\Rr
\norm{ \Gamma(A-k+ i \eps )^{-1} u}_E^2 dk \leq 2 \pi \norm{u}^2
$$
and 
$$\sup_{\eps >0} \int_\Rr
\norm{ \Gamma_*(A^*-k- i \eps )^{-1} u}_{E_*}^2 dk \leq 2 \pi \norm{u}^2.
$$
In   other words for any vector $u\in H$ the vector valued function $\Gamma (A-z)^{-1}u \in E$ for $\Im z <0$ belongs to the vector-valued Hardy class $H^-_2(E)$ (see, e.g.~\cite{SFBK10}) of $E$-valued analytic functions in the lower half plane.
Similarly $\Gamma_* (A^*-z)^{-1} u \in H^+_2(E_*)$, the Hardy class  of $E_*$-valued analytic functions on        $\mathbb C_+$.
\end{theorem}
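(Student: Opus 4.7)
The plan is to use Green's identity \eqref{Green1} to convert $\|\Gamma(A-k+i\eps)^{-1}u\|_E^2$ into a scalar quantity involving $\Im\langle(A-k+i\eps)^{-1}u,u\rangle_H$, and then to invoke the Sz.-Nagy selfadjoint dilation $\cL$ of $A$ to reduce this to a Poisson integral of a positive finite measure of total mass $\|u\|^2$.

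Concretely, substitute $\lambda = -k+i\eps \in \C_+$ and $\mu = \overline{\lambda} = -k-i\eps \in \C_-$ into \eqref{Green1}, apply the operator identity to $u$, and take the inner product with $u$. Using $(A^*+\overline{\lambda})^{-1} = ((A+\lambda)^{-1})^*$, the relation $\lambda - \mu = 2i\eps$, and dividing through by $i$, the identity collapses to
\begin{equation*}
\|\Gamma(A-k+i\eps)^{-1}u\|_E^2 = -2\,\Im\langle (A-k+i\eps)^{-1}u,u\rangle_H - 2\eps\,\|(A-k+i\eps)^{-1}u\|_H^2.
\end{equation*}
The last term is nonpositive and may be discarded for an upper bound. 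By Proposition \ref{dilation}, for $u\in H$ the scalar $\langle (A-k+i\eps)^{-1}u,u\rangle_H$ equals $\langle(\cL-k+i\eps)^{-1}u,u\rangle_{\cH}$; writing the latter through the spectral resolution $\{E_t\}$ of $\cL$ yields
\begin{equation*}
-\Im\langle (A-k+i\eps)^{-1}u,u\rangle_H = \int_\Rr \frac{\eps}{(t-k)^2+\eps^2}\, d(E_t u,u)_{\cH}.
\end{equation*}
Integrating in $k\in\Rr$, Fubini (integrand nonnegative) together with $\int_\Rr \eps/((t-k)^2+\eps^2)\,dk = \pi$ for every $t\in\Rr$ and total mass $(E_\infty u,u)_\cH = \|u\|^2$ give $\int_\Rr -\Im\langle(A-k+i\eps)^{-1}u,u\rangle_H\,dk = \pi\|u\|^2$. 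Hence $\int_\Rr \|\Gamma(A-k+i\eps)^{-1}u\|_E^2\,dk \leq 2\pi\|u\|^2$, uniformly in $\eps>0$.

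The bound on $\Gamma_*(A^*-k-i\eps)^{-1}u$ is proved in an entirely symmetric fashion, starting from \eqref{Green2} with the same choice of $\lambda,\mu$ and using the companion dilation formula $(A^*-\lambda)^{-1}=P_H(\cL-\lambda)^{-1}|_H$ for $\lambda\in\C_+$. The Hardy-class conclusions then follow at once: the map $z\mapsto\Gamma(A-z)^{-1}u$ is analytic on $\C_-\subset\rho(A)$ (since $\Gamma$ is bounded in the graph norm on $D(A)$), and the uniform $L^2(E)$-control on horizontal sections $\{\Im z = -\eps\}$ is the defining property of $H^-_2(E)$; likewise $\Gamma_*(A^*-z)^{-1}u\in H^+_2(E_*)$. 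The only step I anticipate as genuinely nontrivial is the passage through the dilation: without it there is no direct way to read the clean total-mass normalisation $\|u\|^2$ off of $\Im\langle(A-z)^{-1}u,u\rangle_H$, and the sharp constant $2\pi$ would be out of reach.
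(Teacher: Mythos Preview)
Your proof is correct and follows essentially the same route as the paper: the paper derives the key pointwise identity directly from the Lagrange identity \eqref{eq:Lagrange}, while you obtain the same identity via the packaged Green formula \eqref{Green1}, and thereafter both arguments coincide (drop the nonpositive term, pass to the dilation, spectral resolution, Fubini and the Poisson integral). The only cosmetic difference is the entry point \eqref{Green1} versus \eqref{eq:Lagrange}, which are equivalent here.
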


\begin{proof}
According to  
 (\ref{eq:Lagrange})
\begin{eqnarray*}
&&\int_\Rr \norm{  (\Gamma (A- k + i \eps)^{-1}u}_E^2 dk
=  \int_\Rr (  \Gamma (A- k + i \eps)^{-1}u, \Gamma (A- k + i \eps)^{-1}u)_E  dk\\
&& = \dfrac1i  \int_\Rr [(A(A-k+ i \eps)^{-1}u, (A-k+i \eps)^{-1} u)_H  - ( (A-k+i\eps)^{-1}u, A (A-k+i\eps)^{-1}u)_H ]dk\\
&&= \dfrac1i\int_\Rr \{  (u+(k-i\eps) (A-k+i \eps)^{-1}u,  (A-k+i \eps)^{-1}u)_H -( (A-k+i\eps)^{-1} u, u+ (k-i\eps) (A-k+i \eps)^{-1}))_H \} dk\\
&&=\dfrac1i \int_\Rr \{ (u, (A-k+i\eps)^{-1} u )_H-(  (A-k+i\eps)^{-1}u,u)_H -2i\eps  (  (A-k+i\eps)^{-1}u, (A-k+i\eps)^{-1})_H\}dk \\
&&= \int_\Rr \{ 2 \Im   (  u, (A-k+i \eps)^{-1}u)_H -2 \eps
\norm{ (A-k+i\eps)^{-1}u}_H^2\} dk
\leq  2 \int_\Rr \Im (u, (A-k+i \eps)^{-1}u)_H dk .
\end{eqnarray*}
Since, by Proposition \ref{dilation},
$(u, (A-k+i\eps)^{-1}u)_H=(u, P_H(\mathcal L-k+i\eps)^{-1}u)_\mathcal H
$  

for the selfadjoint dilation $\mathcal L$ of $A$ in the Hilbert space $\mathcal H 
 \supseteq H$,
 we have 
\begin{eqnarray*}
\int_\Rr \norm{ \Gamma (A-k+i \eps)^{-1} u }_E^2 dk &\leq& 2 \int_\Rr \Im ( ( \mathcal L-k-i\eps)^{-1}u,u)_{\mathcal H}dk \\
&=& 2 \int_\Rr \Im  \int_\Rr \dfrac1{\lambda-k-i\eps}  d (E_\lambda u,u)_{\mathcal H}dk=
2\int_\Rr   dk \int_\Rr  \dfrac \eps  {  (\lambda -k)^2 +\eps ^2}  d ( E_\lambda u,u)_{\mathcal H},
\end{eqnarray*}
where $E_\lambda$ is the spectral resolution of $\mathcal L$.
Due to positivity of the function and the measure one may use Fubini's Theorem  rewriting the last double integral as
$$\int_\Rr d (E_\lambda u, u)_{\mathcal H} \left (  \int_\Rr dk \dfrac{2\eps }{  (\lambda-k)^2 +\eps^2}\right )=\int_\Rr d(E_\lambda u,u)_{\mathcal H} (2 \pi)=2\pi \norm{u}^2_{\mathcal H}=2 \pi\norm{u}^2_{ H}$$
since in the integral over variables $k$   does not depend on $\lambda$ (by the shift of variables $k\to k-\lambda$) and is equal to $2 \pi$.

The second inequality in the Theorem \ref{thm:2.1} admits exactly the same proof.
\end{proof}

{The proof of the theorem includes two identities:}
\begin{corollary}
We have
$$ \mathrm{(i)}
\int_\Rr \norm{ \Gamma  (A-k+i\eps)^ {-1}u}_E^2 dk 
= 2 \pi\norm{u}^{2}_H- 2 \eps \int_\Rr \norm{  (A-k+i\eps)^{-1}u}_H^2 dk \hbox{ for all } \eps >0$$  and  
$$ \mathrm{(ii)}
\int_\Rr 
\norm{ \Gamma_* (A^*-k-i\eps)^{-1}u}_{E_*} dk = 2\pi\norm{u}^2_H
 - 2 \eps \int_\Rr \norm{  (A^*-k-i\eps)^{-1}u}^2 _H
 dk \hbox{    for all } \eps >0.$$
 In particular, we have useful bounds for an arbitrary  maximally dissipative operator   $A$:
 $$ \sup_{\eps >0} \int_\Rr \eps \norm { (A-k+i\eps)^{-1}u}_H^2 dk \leq \pi \norm{u} _H^2 $$
 and
 $$ \sup_{\eps >0} \int_\Rr \eps \norm { (A^*-k-i\eps)^{-1}u}_H^2 dk \leq \pi \norm{u}_H ^2. $$\end{corollary}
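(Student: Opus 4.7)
The plan is to show that both identities are already contained, as intermediate equalities, in the chain of computations carried out inside the proof of Theorem \ref{thm:2.1}; and then to derive the sup bounds by simply exploiting nonnegativity.

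For identity (i), I would revisit the Lagrange-identity step with $w=(A-k+i\eps)^{-1}u$ and the resolvent rewriting $Aw=u+(k-i\eps)w$. Applying (\ref{eq:Lagrange}) with $u=v=w$ produces, pointwise in $k$, the equality
\begin{equation*}
\|\Gamma(A-k+i\eps)^{-1}u\|_E^2 = 2\,\Im(u,(A-k+i\eps)^{-1}u)_H - 2\eps\,\|(A-k+i\eps)^{-1}u\|_H^2.
\end{equation*}
Integrating in $k$ and invoking $\int_\Rr \Im(u,(A-k+i\eps)^{-1}u)_H\,dk=\pi\|u\|_H^2$ recovers (i) as a genuine equality rather than just the bound written in Theorem \ref{thm:2.1}. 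This last integral identity is exactly the dilation/Fubini/Poisson-kernel block already carried out in the proof of Theorem \ref{thm:2.1}: because $k-i\eps\in\C_-\subset\rho(A)$, Proposition \ref{dilation} gives $(A-k+i\eps)^{-1}=P_H(\cL-k+i\eps)^{-1}|_H$; the spectral theorem for $\cL$ then turns $\Im(u,(A-k+i\eps)^{-1}u)_H$ into the positive Poisson kernel $\int_\Rr \eps/((\lambda-k)^2+\eps^2)\,d(E_\lambda u,u)_\cH$, and Fubini (legal by positivity) together with $\int_\Rr \eps/((\lambda-k)^2+\eps^2)\,dk=\pi$ yields $\pi\|u\|_H^2$. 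Identity (ii) will follow by the same scheme with (\ref{eq:LagrangeStar}) in place of (\ref{eq:Lagrange}) and with $A^*$-dilation on $\C_+$; the sign switch in (\ref{eq:LagrangeStar}) is compensated by the sign switch in $\Im(1/(\lambda-k-i\eps))$, so the $\pi\|u\|_H^2$ on the right-hand side reappears with the same sign.

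The sup bounds then fall out for free: the left-hand sides of (i) and (ii) are manifestly nonnegative, so rearranging gives $2\eps\int_\Rr\|(A-k+i\eps)^{-1}u\|_H^2\,dk \le 2\pi\|u\|_H^2$ and likewise for $A^*$. I do not foresee a genuine obstacle in this argument; the only fine point worth flagging in the write-up is that the Lagrange-identity expansion and the dilation identity both work for arbitrary maximally dissipative $A$ (no positivity hypothesis is used in this block), which is why the final bounds in the Corollary are stated for any MDO.
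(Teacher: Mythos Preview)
Your proposal is correct and is exactly the approach the paper intends: the Corollary is stated immediately after Theorem \ref{thm:2.1} with the remark that ``the proof of the theorem includes two identities,'' and your extraction of the pointwise Lagrange-identity equality, the dilation/Fubini/Poisson-kernel evaluation of $\int_\Rr \Im(u,(A-k+i\eps)^{-1}u)_H\,dk=\pi\|u\|_H^2$, and the nonnegativity argument for the sup bounds reproduces that reasoning. Your closing observation that positivity of $A$ is never used is also apt and matches the Corollary's ``arbitrary maximally dissipative operator'' formulation.
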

 
{The next result is a formulation of the Langer decomposition \cite{Lan61} in a form which will be convenient for our later applications.
Equivalent representations of the completely non-selfadjoint subspace may be deduced from expressions from completely-nonunitary
parts of contractions, see e.g. \cite[Section 6]{NikVas98}. }

 \begin{thm}  \label{thm:langer}
 The reducing subspace of the maximally dissipative operator $A$ corresponding to its completely non-selfadjoint part in the Langer decomposition is
 \be\label{eq:cns}
H_{cns}=\bigvee   \left\{ \bigvee_{\Im \lambda >0}  ( \Gamma (A+\lambda)^{-1})^*E, \bigvee_{\Im \mu <0} (\Gamma_* (A^*+\mu)^{-1})^*E_*)\right\}
\ee
 and its selfadjoint part $H_{sa}: = H\ominus H_{cns}$.
 \end{thm}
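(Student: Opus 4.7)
The plan is to define $H_{sa}:=H\ominus H_{cns}$ and prove both that $H_{sa}$ is reducing for $A$ with $A|_{H_{sa}}$ selfadjoint, and that $A|_{H_{cns}}$ admits no nontrivial reducing subspace on which it is selfadjoint. The starting point is the observation that, since $\langle h,(\Gamma(A+\lambda)^{-1})^*e\rangle_H=\langle\Gamma(A+\lambda)^{-1}h,e\rangle_E$ (and similarly for $\Gamma_*$), a vector $h$ lies in $H_{sa}$ if and only if $\Gamma(A+\lambda)^{-1}h=0$ for every $\lambda\in\C_+$ \emph{and} $\Gamma_*(A^*+\mu)^{-1}h=0$ for every $\mu\in\C_-$. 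The analytic engine of the argument is the following \textbf{Key Lemma}: if $u\in D(A)\cap D(A^*)$ with $Au=A^*u$, then $\Gamma u=0=\Gamma_* u$. Indeed, setting $v=u$ in \eqref{eq:Lagrange} and \eqref{eq:LagrangeStar} yields $i\|\Gamma u\|_E^2=\langle Au,u\rangle-\langle u,Au\rangle=\langle A^*u,u\rangle-\langle u,A^*u\rangle=-i\|\Gamma_*u\|_{E_*}^2$, forcing both norms to vanish.

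I next verify invariance of $H_{sa}$ under the resolvents. Fix $h\in H_{sa}$, $\lambda\in\C_+$ and $\mu\in\C_-$. Since $\Gamma(A+\lambda)^{-1}h=0$, the right-hand side of Green's identity \eqref{Green1} applied to $h$ vanishes, giving
\[
(A+\lambda)^{-1}h=(A^*+\mu)^{-1}\bigl[h-(\lambda-\mu)(A+\lambda)^{-1}h\bigr],
\]
which shows $(A+\lambda)^{-1}h\in D(A^*)$, and multiplication by $(A^*+\mu)$ yields $A^*(A+\lambda)^{-1}h=A(A+\lambda)^{-1}h$. The Key Lemma then forces $\Gamma_*(A+\lambda)^{-1}h=0$. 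Rearranging the same identity as $(\lambda-\mu)(A^*+\mu)^{-1}(A+\lambda)^{-1}h=(A^*+\mu)^{-1}h-(A+\lambda)^{-1}h$ and applying $\Gamma_*$ produces $\Gamma_*(A^*+\mu)^{-1}(A+\lambda)^{-1}h=0$ (both terms on the right having zero $\Gamma_*$-image), while the resolvent identity gives $\Gamma(A+\lambda')^{-1}(A+\lambda)^{-1}h=0$ for every $\lambda'\in\C_+$ (the case $\lambda'=\lambda$ by analytic continuation). Hence $(A+\lambda)^{-1}h\in H_{sa}$; the symmetric argument using \eqref{Green2} and \eqref{eq:LagrangeStar} yields $(A^*+\mu)^{-1}h\in H_{sa}$.

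With $H_{sa}$ and $H_{cns}$ both invariant under $(A+\lambda)^{-1}$ for $\lambda\in\C_+$ (the complementary invariance follows by taking adjoints, since $((A+\lambda)^{-1})^*=(A^*+\overline{\lambda})^{-1}$), $P_{H_{sa}}$ commutes with these resolvents, which translates into the invariance of $H_{sa}$ under $A$; hence $H_{sa}$ reduces $A$. For $u\in D(A_0):=D(A)\cap H_{sa}$, the identity $u=(A+\lambda)^{-1}(A+\lambda)u$ with $(A+\lambda)u\in H_{sa}$ combined with the preceding step gives $\Gamma u=0$; then \eqref{eq:Lagrange} shows $A_0:=A|_{H_{sa}}$ is symmetric, and surjectivity $\Ran(A_0+\lambda)=H_{sa}$ for $\lambda\in\C_+$ (via $(A+\lambda)^{-1}|_{H_{sa}}$) and $\lambda\in\C_-$ (via $(A^*+\lambda)^{-1}|_{H_{sa}}$) delivers selfadjointness. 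Conversely, if $K\subseteq H_{cns}$ is reducing for $A$ with $A|_K$ selfadjoint in $K$, then for $h\in K$ and $\lambda\in\C_+$ the vector $u=(A+\lambda)^{-1}h\in K\cap D(A)$ satisfies $\langle u,Av\rangle=\langle Au,v\rangle$ for all $v\in D(A)$ (decompose $v$ along $K\oplus K^\perp$ and use selfadjointness in $K$), so $u\in D(A^*)$ with $A^*u=Au$; the Key Lemma then gives $\Gamma u=0$. The analogous argument with $\Gamma_*$ shows $K\perp H_{cns}$, hence $K=\{0\}$.

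The principal obstacle is the Key Lemma: the interplay of \eqref{eq:Lagrange} and \eqref{eq:LagrangeStar} producing $\|\Gamma u\|_E^2+\|\Gamma_*u\|_{E_*}^2=0$ whenever $Au=A^*u$ is what transfers the vanishing of $\Gamma(A+\lambda)^{-1}h$ to the vanishing of $\Gamma_*(A+\lambda)^{-1}h$. Without this symmetric half-conclusion one could control only one of the two defining conditions for $H_{sa}$ after applying the resolvent of $A$, and the invariance of $H_{sa}$ under $(A^*+\mu)^{-1}$ would remain out of reach, breaking the reducing-subspace argument.
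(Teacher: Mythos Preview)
Your proof is correct and takes a genuinely different route from the paper's. The paper starts from the (abstractly given) Langer selfadjoint part $H_{sa}$ and shows it coincides with $\mathcal M_A^\perp$ via two inclusions: the inclusion $H_{sa}\subseteq\mathcal M_A^\perp$ uses the Green identities much as you do, but for the converse the paper builds, for each $h\in\mathcal M_A^\perp$, the cyclic subspace $\eta_h=\bigvee\{(A+\lambda)^{-1}h,(A^*+\mu)^{-1}h\}$, checks it is reducing, invokes Lemma~\ref{lemstrong} to place $h$ inside $\eta_h$, and then proves that the Cayley transform $T_h=(A-i)(A+i)^{-1}|_{\eta_h}$ is unitary by a fairly lengthy direct verification of $T_h^*T_h=T_hT_h^*=I$. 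Your argument instead defines $H_{sa}:=H_{cns}^\perp$ from the outset and verifies the two defining properties (reducing with selfadjoint restriction, and complete non-selfadjointness of the complement) directly. The crucial new ingredient is your Key Lemma---that $u\in D(A)\cap D(A^*)$ with $Au=A^*u$ forces $\Gamma u=\Gamma_*u=0$---which lets you transfer the vanishing of $\Gamma(A+\lambda)^{-1}h$ to that of $\Gamma_*(A+\lambda)^{-1}h$ without passing through the Cayley transform. This makes the invariance of $H_{sa}$ under both resolvent families, and the selfadjointness of $A|_{H_{sa}}$ (via $\mathrm{Ran}(A_0\pm i)=H_{sa}$), essentially immediate. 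Your approach is shorter and avoids both the unitary-Cayley computation and Lemma~\ref{lemstrong}; the paper's approach, on the other hand, stays closer to the contraction/dilation philosophy that pervades the rest of the article.
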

 \begin{proof}
  Denote  the right hand side of \eqref{eq:cns} by ${\mathcal M}_{A}$.
 {Our proof consists of two inclusions, identifying $H_{sa}$  with the orthogonal complement  of ${\mathcal M}_A$.}

 We first show:
  $ H_{sa} \subseteq {\mathcal M}_{A}^\perp$.
 Let   $h\in H_{sa}$  
 then using (\ref{Green1}), the first abstract Greens function identity, one gets for  $\lambda=\overline {\mu} \in    \mathbb C_+$
 $$  -i (\Gamma (A+\lambda)^{-1})^* (\Gamma (A+\lambda)^{-1})h=(A+\lambda)^{-1} h -(A^*+\overline \lambda)^{-1} h + (\lambda-\overline \lambda)(A^*+\overline \lambda)^{-1} (A+\lambda)^{-1} h
 =0 
 $$
 by the Hilbert identity for the selfadjoint operator  $A|_{H_{sa}}=A^*|_{H_{{sa}}}$.
Therefore 
  $$\norm{ \Gamma (A+\lambda)^{-1}h}^2_E = \\
 ( ( \Gamma (A+\lambda)^{-1})^* ( \Gamma (A+\lambda))^{-1} h,h)_H=0,$$  i.e.
 $\Gamma(A+\lambda)^{-1}h
\equiv 0$   for all $\lambda \in \ \mathbb C_+$.

 Similarly, using the second identity of (\ref{Green2}), we have
$$\Gamma_*(A^*+\mu)^{-1} h\equiv 0   $$
for $\mu \in \ \mathbb C_-$. 
 The last two conditions on $h$    mean that $h\perp \mathcal M_A$, or   $h\in \mathcal M^{\perp}_A$ proving the
  inclusion $H_{sa}\subseteq \mathcal {M_A}^\perp$.
  
It remains to show:
$H_{sa}\supseteq \mathcal {M_A}^\perp.$
Let the vector $h\neq 0$  and $h\in \mathcal M_A^\perp$, i.e.
$\Gamma (A+\lambda)^{-1}h=0$  and $\Gamma_* (A+\mu)^{-1}h=0$  for any $\lambda \in \ \mathbb C_+$  and $\mu \in \ \mathbb C_-$.   
Consider a reducing  subspace for $A$,  $\mathcal \Eta_h$ in $H$ generated by the vector $h$:
$$ \eta_h=\bigvee \{ (\bigvee_{\Im \lambda >0} (A+\lambda)^{-1} h),    (\bigvee_{\Im \mu < 0} (A^*+\mu)^{-1} h)\}.$$
Its reducing property, which follows from the invariance with respect to both resolvents $(A+\tilde \lambda)^{-1}$  
  and  $(A^*+\tilde \mu)^{-1}$, $\tilde \lambda
  \in \C_+$  and    $\tilde \mu
  \in \C_-$ can be easily proved by using the Hilbert identity for resolvents of $A$ and $A^*$  respectively.
Indeed, for $\Im \tilde \lambda >0,  \tilde \lambda   \neq   \lambda;  \alpha, \beta \in  \mathbb C$
\begin{eqnarray*}
&&  (A+\tilde \lambda)^{-1}  (\alpha (A+\lambda)^{-1} h + \beta (A^*+\mu)^{-1} h)\\
&&= \alpha ( (A+\lambda)^{-1} -(A+\tilde \lambda )^ {-1} )  (\tilde  \lambda -\lambda)^{-1}h+\beta (A+\tilde \lambda)^{-1} (A^*+\mu)^{-1}h
\\
&&=\alpha ( (A+\lambda)^{-1}- (A+ \tilde \lambda)^{-1}) (\tilde \lambda-\lambda)^{-1}h +\beta (\tilde \lambda -\mu)^{-1}  [  (A^*+\mu)^{-1}-(A+\lambda)^{-1}]h
\end{eqnarray*}
 is again  a linear combination of vectors of the type
$(A+\lambda)^{-1} h $ and   $ (A^*+\mu)^{-1} h$.  Here we used formula (\ref{Green2}),   the second abstract Green function identity together with the fact that $\Gamma_* ( A^*+\mu)^{-1}h=0$, $\mu \in \ \mathbb C_-$.
Invariance with respect to the resolvent $(A^*+\tilde \mu)^{-1}$, $\tilde \mu \neq \mu$, $\tilde \mu \in \ \mathbb C_-$  has a similar proof  using the first abstract Green  function identity (\ref{Green1})   
and the fact that $\Gamma (A+\lambda)^{-1}h=0$.  {The exceptional cases $\tilde \lambda  = \lambda$ and $\tilde \mu  = \mu$ are achieved 
by the limit procedures $\tilde \lambda \to \lambda$ and $\tilde \mu\to \mu$ respectively.}

{By Lemma \ref{lemstrong}, we have $h=\lim_{ {\tau }\to +\infty} (A+i \tau)^{-1} (i \tau)h$. This implies that $h\in\eta_h$.
Therefore, it is sufficient to prove that the  reduced operator $A|_{\eta_h}$ is a selfadjoint operator in    $ \mathcal \eta_h $, since then $h\in\eta_h\subseteq H_{sa}$. }
Let us  consider the Cayley transform of $A|_{\eta_h}$
$$ T_h:=(A-i)(A+i)^{-1}|_{\eta_h}
=(I-2i (A+i)^{-1})|_{\eta_h}
$$
and show that $T_h$ is a unitary operator on $\eta_h$.  Indeed, noting that $ (T_h)^*=(I+2i (A^*-i)^{-1})|_{\eta_h}$, we see
\begin{eqnarray*}
(T_h)^*T_h&=&(I + 2 i (A^*-i)^{-1})(I-2i (A+i)^{-1}))  |_{\eta_h}\\
&=&(I + 2 i(A^*-i)^{-1} - 2 i (A+i)^{-1}-(2 i)^2 (A^*-i)^{-1}  (A+i)^{-1})  |_{\eta_h}.
\end{eqnarray*}
Again by the first abstract Green function formula   (\ref{Green1})
\begin{eqnarray*}
&& [ (A+\lambda)^{-1} - (A^*+\mu)^{-1}  + (\lambda-\mu) (A^*+\mu)^{-1} (A+\lambda)^{-1}]h\\
&&= -i (\Gamma (A+\overline \mu)^{-1})^* ) (\Gamma (A+\lambda)^{-1})h=0
\end{eqnarray*}
and similarly by the second abstract Green function formula  (\ref{Green2})
\begin{eqnarray*}
&& [(A+\lambda)^{-1}-(A^*+\mu)^{-1} + (\lambda-\mu)(A+\lambda)^{-1} (A^*+\mu)^{-1}]h  \\
&&   =  -i (\Gamma_* (A ^*+ \overline \lambda)^{-1} )^* (\Gamma_* (A^*+\mu)^{-1} ) h=0
\end{eqnarray*}
and therefore, comparing  the last two formulae, we have shown
\begin{equation}
(A+\lambda)^{-1}  (A^*+ \mu)^{-1}  h =  (A^*+\mu)^{-1} (A + \lambda)^{-1}h  \label{8}
\end{equation}
for any $\lambda \in \ \mathbb C_+$, $\mu\in \ \mathbb C_-$.
 {So 
 \begin{eqnarray*}
(T_h)^*T_h(A+\lambda)^{-1}h&=&  (I + 2 i (A^*-i)^{-1}-2 i (A+i)^{-1}-(2 i)^2 (A^*-i)^{-1}  (A+i)^{-1}) (A+\lambda)^{-1} h\\
&=&   [(A+\lambda)^{-1
}+ 2 i (A+\lambda)^{-1} (A^*-i)^{-1}-  \\
&& -2i (A+\lambda)^{-1}  (A+i)^{-1} -(2 i)^2 (A^*-i)^{-1} (A+i)^{-1}(A+\lambda)^{-1}]h\\
&=&(A+\lambda)^{-1} [  h + 2 i (A^*-i)^{-1} h - 2 i (A+i)^{-1} h]\\
&&-(2i)^2 (A^*-i)^{-1} \{  (A+i)^{-1} - (A+\lambda)^{-1} \} (\lambda-i)^{-1}h\\
&=&  (A+\lambda)^{-1}[  h + 2 i (A^*-i)^{-1} h- 2 i (A+i)^{-1}h]\\
&&-(2i)^2 \{ (A+i)^{-1} - (A+\lambda)^{-1} \} (\lambda-i)^{-1}(A^*-i)^{-1} h \\
&=&  (A+\lambda)^{-1}[ h + 2 i (A^*-i)^{-1}h-2i(A+i)^{-1}h]\\
&&-(2i)^2 (A+\lambda)^{-1}(A+i)^{-1} (A^*-i)^{-1}h = (A+\lambda)^{-1}h,
\end{eqnarray*}}
where in the last step the second abstract Green identity (\ref{Green2}) was used   and $h \in M_A^\perp$.
 Hence
$$(T_h)^*T_h(A+\lambda)^{-1}h =(A+\lambda)^{-1}h, \forall  \lambda \in \ \mathbb C_+$$
Similar calculations which are even simpler because we can use (\ref{8}) explicitly, show 
$$(T_h)^*T_h(A^*+\mu)^{-1}h=(A^*+\mu)^{-1} h,  \forall\mu \in \ \mathbb C_ -.$$
{
Since any element of $\eta_h$ is a limit of linear combinations of vectors $(A+\lambda)^{-1}h$, and $(A^*+\mu)^{-1}h$  and $T_h$ is a bounded operator we have proved the isometry of $T_h$: $(T_h)^*T_h=I$  on $\eta_h$.
The second identity, $T_h(T_h)^*=I$ on $\eta_h,$ admits a similar proof because all calculations above are symmetric with respect to both $A$  and $A^*$.}

{This proves that $A\vert_{\eta_h}$ is selfadjoint, so $\eta_h\subseteq H_{sa}$. Since $h\in \eta_h$, this shows the required inclusion
$\mathcal {M_A}^\perp\subseteq H_{sa}$, completing the proof.}
\end{proof}

The next theorem demonstrates one of the deepest  results of dilation theory. Its original proof, given in \cite{SFBK10}, is based on some ideas of a geometric nature. 
{Actually, this theorem was first proven for the case of contractions and their unitary dilations.  However the fact can be easily transfered to   the dissipative situation using the Cayley transform.
Below we suggest a new proof  based essentially on Theorem \ref{thm:2.1},  i.e. applying  operator analytic arguments.}

\begin{theorem} (B.Sz.-Nagy - C.Foias \cite{SFBK10}) \label{thm:Foias}
The minimal selfadjoint 
dilation   { of a   completely non-selfadjoint maximally dissipative operator has purely absolutely continuous spectrum
covering the whole real line.}
\end{theorem}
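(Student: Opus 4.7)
The plan is to prove two facts: (A) $\mathcal{L}|_{\mathcal{H}_{\min}}$ has purely absolutely continuous spectrum, and (B) this spectrum equals all of $\mathbb R$. Both rely essentially on Theorem \ref{thm:2.1} and Theorem \ref{thm:langer}. By minimality of $\mathcal H_{\min}$ and invariance of $\mathcal H^{ac}(\mathcal L)$ under $(\mathcal L-z)^{-1}$, for (A) it suffices to prove $H\subseteq\mathcal H^{ac}(\mathcal L)$. Fix $h \in H$. The Sz.-Nagy dilation property identifies $F_h(z):=\langle(\mathcal L-z)^{-1}h,h\rangle_{\mathcal H}$ with $\langle(A^*-z)^{-1}h,h\rangle_H$ for $z=k+i\eps\in\C_+$, and the Lagrange identity (\ref{eq:LagrangeStar}) for $A^*$ yields, after one line of calculation,
\[
\Im F_h(k+i\eps)=\tfrac12\|\Gamma_*(A^*-k-i\eps)^{-1}h\|_{E_*}^2+\eps\|(A^*-k-i\eps)^{-1}h\|_H^2.
\]

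By Theorem \ref{thm:2.1} the function $z\mapsto \Gamma_*(A^*-z)^{-1}h$ lies in $H^+_2(E_*)$ with $L^2$-boundary trace $\phi_0$. Moreover $\eps\|(A^*-k-i\eps)^{-1}h\|_H^2\leq \eps\|(\mathcal L-k-i\eps)^{-1}h\|_{\mathcal H}^2=\eps\int\frac{d\mu_h(t)}{(t-k)^2+\eps^2}\to \pi\mu_h(\{k\})=0$ for Lebesgue-a.e.\ $k$, so the density of $\mu_h^{ac}$ equals $f_h(k)=\tfrac{1}{2\pi}\|\phi_0(k)\|^2$ a.e. Combining this with the Corollary after Theorem \ref{thm:2.1} (and the Hardy-space convergence $\int\|\Gamma_*(A^*-k-i\eps)^{-1}h\|^2\,dk\to\int\|\phi_0\|^2\,dk$) gives
\[
\mu_h^{sing}(\R)=\|h\|^2-\int_\R f_h\,dk=\tfrac{1}{\pi}\beta(h),\quad \beta(h):=\lim_{\eps\to 0^+}\eps\int_\R\|(A^*-k-i\eps)^{-1}h\|_H^2\,dk,
\]
so (A) is equivalent to $\beta(h)=0$ for every $h\in H$.

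This is the main obstacle: in the selfadjoint case one has $\beta(h)=\pi\|h\|^2$, so complete non-selfadjointness must enter decisively. My plan is to invoke Theorem \ref{thm:langer} to approximate $h\in H=H_{cns}$ by linear combinations of the generators $(\Gamma(A+\lambda)^{-1})^*e$ (for $\Im\lambda>0$, $e\in E$) and $(\Gamma_*(A^*+\mu)^{-1})^*e_*$ (for $\Im\mu<0$, $e_*\in E_*$); on each generator, iterated use of the Green identities (\ref{Green1})--(\ref{Green2}) rewrites $(A^*-k-i\eps)^{-1}h$ as a finite sum of terms that either involve the Hardy-class functions $\Gamma(A-\cdot)^{-1}$ or $\Gamma_*(A^*-\cdot)^{-1}$ (whose $L^2(\R,dk)$-norms are uniformly bounded in $\eps$ by Theorem \ref{thm:2.1}), or are bounded multiples of the generator itself. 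Multiplication by $\eps$ then drives the integral to zero as $\eps\to 0$, and the a priori bound $\beta(h)\leq\pi\|h\|^2$ together with linearity extends the conclusion from the generators to the whole closed span $H_{cns}=H$.

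For (B), suppose $(a,b)\subset \rho(\mathcal L|_{\mathcal H_{\min}})$. Then $\mu_h((a,b))=0$ for every $h\in H$, and since $\mu_h$ is purely AC by (A), the density $f_h$ vanishes a.e.\ on $(a,b)$; hence $\phi_0=0$ a.e.\ on $(a,b)$. The Luzin--Privalov uniqueness theorem for $H^+_2(E_*)$ then forces $\Gamma_*(A^*-z)^{-1}h\equiv 0$ for all $z\in\C_+$, i.e.\ $h\perp (\Gamma_*(A^*+\mu)^{-1})^*E_*$ for every $\mu\in\C_-$. The symmetric argument from the lower half-plane using $\Gamma(A-z)^{-1}h\in H^-_2(E)$ gives $h\perp (\Gamma(A+\lambda)^{-1})^*E$ for every $\lambda\in\C_+$, so by Theorem \ref{thm:langer} $h\perp H_{cns}=H$, forcing $h=0$; contradiction.
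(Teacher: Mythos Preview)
Your strategy for (A) is different from the paper's, and it carries a genuine gap at its decisive step. The paper does \emph{not} work with the diagonal measures $\mu_h$; it shows that the \emph{off-diagonal} measures $d(E_t u,\phi)_{\mathcal H}$ are absolutely continuous for $u\in H$ and $\phi$ ranging over the generators $(\Gamma(A+\lambda)^{-1})^*E$, $(\Gamma_*(A^*+\mu)^{-1})^*E_*$, by proving via Theorem~\ref{thm:2.1} that the Cauchy integrals $\int(t+\lambda)^{-1}d(E_t u,\phi)$ lie in $H_2^\pm$ and then invoking the F.\ and M.\ Riesz theorem. Density (Theorem~\ref{thm:langer}) and minimality finish the job. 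This sidesteps entirely the quantity $\beta(h)$ you introduce.

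Two concrete problems with your route. First, the pointwise claim
\[
\eps\|(\mathcal L-k-i\eps)^{-1}h\|_{\mathcal H}^2=\eps\int_{\R}\frac{d\mu_h(t)}{(t-k)^2+\eps^2}\ \longrightarrow\ \pi\,\mu_h(\{k\})
\]
is wrong: the right-hand side is the Poisson integral of $\mu_h$, which for Lebesgue-a.e.\ $k$ converges to $\pi f_h(k)$, not to $\pi\mu_h(\{k\})$. (This is repairable: from $\Im F_h(k+i\eps)\ge \tfrac12\|\Gamma_*(A^*-k-i\eps)^{-1}h\|^2$ you still get $f_h\ge \tfrac{1}{2\pi}\|\phi_0\|^2$ a.e., hence $\mu_h^{\mathrm{sing}}(\R)\le \tfrac{1}{\pi}\beta(h)$, which is all you need.)

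The second problem is the real obstruction: your assertion that ``iterated use of the Green identities rewrites $(A^*-k-i\eps)^{-1}h$ as a finite sum of terms involving the Hardy-class functions $\Gamma(A-\cdot)^{-1}$ or $\Gamma_*(A^*-\cdot)^{-1}$, or bounded multiples of $h$'' is not substantiated and has a type mismatch. The Hardy-class bounds of Theorem~\ref{thm:2.1} control $\|\Gamma_*(A^*-k-i\eps)^{-1}u\|_{E_*}$ in $L^2(dk)$ for each fixed $u$; they say nothing about the $H$-norms $\|(A^*-k-i\eps)^{-1}h\|_H$ you need to integrate. Concretely, for $h=(\Gamma_*(A^*-\zeta)^{-1})^*e_*$ one can compute $\Gamma_*(A^*-z)^{-1}h$ from \eqref{eq:SStarS2}, but an $L^2(\R,dk)$ bound on $\|(A^*-k-i\eps)^{-1}h\|_H$ uniform in $\eps$ does not fall out of \eqref{Green1}--\eqref{Green2} in the way you suggest. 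Without this, $\beta(h)=0$ is unproved, and your reduction of (A) to it is empty. The paper's use of F.--M.\ Riesz on off-diagonal measures is precisely what circumvents this difficulty.

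Your argument for (B) is different from the paper's and, conditional on (A), is correct and rather elegant: vanishing of $f_h$ on a gap forces $\phi_0=0$ on a set of positive measure, whence Luzin--Privalov kills the Hardy function $\Gamma_*(A^*-\cdot)^{-1}h$, and the symmetric argument plus Theorem~\ref{thm:langer} gives $h=0$. The paper instead observes that a spectral gap lets one analytically continue $(A-\lambda)^{-1}=P_H(\mathcal L-\lambda)^{-1}|_H=(A^*-\lambda)^{-1}$ across the gap, forcing $A=A^*$.
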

\begin{proof}

According to Theorem {\ref{thm:langer}} complete non-selfadjointness leads to the fact that
$$H=H_{cns}=\bigvee \{ \Span_{\Im \lambda >0} (\Gamma (A+\lambda)^{-1})^* E,\; \ \Span _ { \Im \mu<0 }(\Gamma_*(A^*+\mu)^{-1})^*E_* \}.$$

Consider two linear sets of test vectors generating $H$:
$$\mathcal L_1:=\Span_{\Im \lambda >0} (\Gamma (A+\lambda)^{-1})^*E; \;\mathcal L_2  :=\Span_{\Im \mu <0}(\Gamma_* (A^*+\mu)^{-1}E_*)
.$$
Theorem \ref{thm:2.1} shows  that for any $$u \in H,    g \in E,   g_*   \in E_*:$$  
$$(\Gamma (A+\lambda)^{-1}u,g)_E=  (u, ( (\Gamma (A+\lambda)^{-1})^* g)_H\in H_2^+
$$
and
$$(\Gamma_*(A^*+\mu)^{-1}u,g_*)_{E_*} 
\equiv (u, ( (\Gamma_* (A^*+\mu)^{-1} )^*g_*)_H\in H^-_2.$$
Introducing the auxiliary parameter
$\lambda_0 \in \ \mathbb C_+$ 
we have by the Hilbert identity 
\begin{eqnarray*}
&& (  \Gamma (A+\lambda_0)^{-1}   (A+\lambda)^{-1}u,g )_E =   (\Gamma(A+\lambda_0)^{-1}[(A+\lambda)^{-1}u],g)_E\\
&& = ([ (A+\lambda)^{-1}u], ( \Gamma(A+\lambda_0)^{-1})^*g)_H   \in H_2^+, \forall g\in E
\end{eqnarray*}
as a function of $\lambda \in \ \mathbb C_+$.
Similarly,
$$(\Gamma_*(A^*+\mu_0)^{-1} (A^*+\mu)^{-1}u,g_*)_{E_*} = ((A^*+\mu)^{-1}u, (\Gamma_* (A^*+\mu_0)^{-1})^*  g_*)_{H}\in H_2^{-}, \forall g_*\in E_*,
$$
$\Im \mu_0<0$, as a function of $\mu \in  \ \mathbb C_-$.
So introducing the minimal selfadjoint dilation $\mathcal L $ of    $A$ in $\mathcal H$ we proved, assuming without loss of generality $\mathcal H  \supset H$,
$$
( P_H(\mathcal   L+ \lambda)^{-1} u, (\Gamma (A+\lambda_0)^{-1})^*g )_\cH \in H_2^+, \forall g \in E, \forall u\in H, \forall \lambda_0\in \ \mathbb C_+
$$
as a function of $\lambda \in \ \mathbb C_+$    and
$$ ( P_H(\mathcal L+\mu)^{-1} u, ( \Gamma_*(A^*+\mu_0)^{-1})^* g_*)_\cH \in H_2^{-}$$
as a function of $\mu \in \ \mathbb C_-$  for arbitrary $u\in H, \mu_0 \in \ \mathbb C_-$ and $g_*\in E_*.$
Taking  linear combinations of the test vectors one gets
$$(P_H(\mathcal L+ \lambda)^{-1} u, \phi)_\cH
{=  ((\mathcal L+ \lambda)^{-1} u, \phi)_\cH}
\in H_2^+,\forall u \in H, \phi \in \mathcal  L_1$$
and
$$(P_H(\mathcal L + \mu)^{-1} u, \psi)_\cH
 {=((\mathcal L + \mu)^{-1} u, \psi)_\cH}
\in H_2^-,\forall u \in H, \psi \in \mathcal  L_2.$$
Introducing the spectral resolution $E_t$ of $\mathcal L$ in $\mathcal H \supset H$  we can rewrite our conditions as follows:
$$ \int_\Rr \dfrac1{t+\lambda } d (E_t u, \phi )_\mathcal  H \in H^+_2 ,\forall u \in H, \; \phi\in \mathcal L_1 $$
and
$$ \int_\Rr \dfrac1{t+\mu} d (E_t u, \psi )_\mathcal  H \in H^-_2 ,\forall u \in H, \psi \in \mathcal L_2.
$$
By the standard representation theorem for Hardy classes  there exist two $L_2(\Rr)$ scalar functions  $f_\pm$ (depending on $u$, $\phi$, $\psi$)
such that
$$\int_\Rr \dfrac1{t+\lambda} d (E_t u, \phi)_\mathcal H=\dfrac1{2\pi}  \int_\Rr  \dfrac{ f_+(t) }{t+\lambda} dt, \forall \lambda\in \ \mathbb C_+
$$
and
$$\int_\Rr \dfrac1{t+\mu} d (E_t u, \psi)_\mathcal H=-\dfrac1{2\pi}  \int_\Rr  \dfrac{ f_-(t) }{t+\mu} dt, \forall \mu\in \ \mathbb C_-.
$$
Therefore
$$\int_\Rr \dfrac1{t+\lambda} [ d (E_t u, \phi)_\mathcal H-\dfrac1{2\pi i } f_+(t)dt]\equiv 0, \lambda\in \ \mathbb C_+
$$
and
$$\int_\Rr \dfrac1{t+\lambda} [ d (E_t u, \psi)_\mathcal H  +\dfrac1{2\pi i } f_-(t)dt]\equiv 0, \lambda\in \ \mathbb C_+.
$$
The  {F. and M. Riesz} Theorem \cite{Koosis}  implies that the complex measures
$\{ d(E_t u, \phi)_\mathcal H-\dfrac1{ 2 \pi i} f_+(t) dt \} $ and  $\{ d(E_t u, \psi)_\mathcal H+\dfrac1{ 2 \pi i} f_-(t) dt \} $
are both absolutely continuous, i.e.
$ d (E_t u, \phi)_\mathcal H$  and $ d (E_t u, \psi)_\mathcal H$   are both  absolutely continuous for any $\phi \in \mathcal L _1$  and $\psi \in \mathcal L_2$ respectively.  Summing up,  $d (E_t u, v)_\mathcal H$ is an absolutely  continuous measure for any $u\in H$  and  any vector $v\in \mathcal L_1+ \mathcal L_2 =: \mathcal L_3.$
 As a final step let us consider the expression
\begin{eqnarray*}
&&
d (E_t (\mathcal L + \overline \gamma )^{-1}(\mathcal L + \sigma)^{-1} u, v )_\mathcal H=  \dfrac1{(t+\sigma)(t+\overline \gamma)}  d(E_t u,v)_\mathcal H
\\
 &&= d ( E_t (\mathcal L + \sigma)^{-1}u, (\mathcal L + \gamma)^{-1}v )_\mathcal H
\end{eqnarray*}
which is an absolutely continuous complex measure  for any values of non-real parameters $\sigma$ and $\gamma$.
Since  $\mathcal L$ is a minimal selfadjoint dilation of $A$ the  $\Span \{ (\mathcal L+\sigma)^{-1} u :  u \in H \;{\rm and \;} \sigma \in \ \mathbb C\backslash \Rr \}  $
is dense in $\mathcal H$. The same is   true   for   $\Span \{ (\mathcal L+\gamma)^{-1}v :   v \in \mathcal L_3 \;{\rm and \;} \gamma \in \mathbb C \backslash \Rr \}  $.
So the measure
 {$d(E_t x,y)_\mathcal H$ is pure absolutely continuous for dense sets of vectors $x$  and $y$ in $\mathcal H$.}  Therefore the minimal dilation $\mathcal L$ has pure absolutely continuous spectrum.

The spectrum of $ \mathcal L$ has to cover the whole real line. 
Indeed, assume that its spectrum has a gap, which includes  an interval  $I$.  Then the formula
in Proposition \ref{dilation}     connects the resolvents of the dilation and of the  operators $A $ and $A^*$, showing   that all three resolvents admit   analytic continuations from the appropriate  complex half-plane  to the interval $I$ and therefore coincide there. Hence $A=A^*$.
\end{proof}

\section{Spectral form of the selfadjoint dilation of a maximally dissipative operator}

By von Neumann's   general theory of selfadjoint operators in Hilbert spaces (see, e.g.~\cite{BS87}), the abstract Hilbert space can be replaced by the space of $L_2$-summable functions $\eta(k)$ with  $k\in \Rr$, taking values in auxiliary Hilbert spaces, such that the minimal dilation of the completely non-selfadjoint maximally dissipative operator $A$ is represented in that space by a multiplication operator by the independent variable $k\in \Rr$.

In {this section}  we will make this procedure explicit, transforming the  translational  form \cite{Nab81}  of $\mathcal L$ into its explicit spectral form. Another important feature of the approach is that both  transforms to translational form and later to the spectral form will be performed in an explicit way using the operator colligations method, as well as the Strauss characteristic function.  {The spectral form will be presented in B.~Pavlov's version  \cite{Pav75}}, symmetric with respect to both incoming and outgoing subspaces \cite{LP67,Pav77}. That form, in our opinion, has some advantages compared to  both the standard B.~Sz-Nagy-C.~Foias form of the selfadjoint (unitary) dilation and  also  the   L.~de Branges \cite{dB68} {form}.

We first recall ({in the notation of} \cite[Section 5]{BMNW20}) the explicit construction procedure of the selfadjoint dilation of a maximally dissipative operator $A$ in  a Hilbert space 
$H$. 

{The linear set defined next will be the domain of the selfadjoint dilation of $A$ in the so-called translation form.}
 \begin{definition} 
  Let  $\mu \in \mathbb C_-$ and $\lambda \in \mathbb C_+$ {and consider the Hilbert space $\mathcal{H}_{tr} := L_2(\Rr_-, E_*) \oplus H\oplus L_2 (\Rr_+,E)$.
   Define the linear subset  $\mathcal {D ({L}}_{tr })$ by 
	}

\begin{eqnarray}\label{def:L}
 D(\cL_{tr}) & = & \left\{ U=\U\in\cH_{tr} \; : \; u\in H,\; v_+\in H^1(\R_+,E),\; v_-\in H^1(\R_-,E_*), \; \right. \\
 &  \mbox{{\bf (I)}} & \hspace{1cm} \begin{array}{l}u+(\Gamma_*(A^*+\mu)^{-1})^*v_{-}(0)\in D(A)  \hbox{ and }  \\
                                                                          v_+(0)=S^*(-\mu)v_-(0)+i\Gamma\left(u+(\Gamma_*(A^*+\mu)^{-1})^*v_{-}(0)\right)\end{array},
\nonumber \\ 
    & \mbox{{\bf (II)}} & \hspace{1cm} \left. \begin{array}{l}u+(\Gamma(A+\lambda)^{-1})^*v_{+}(0)\in D(A^*) \hbox{ and }\\
	 v_-(0)=S(-\bar{\lambda})v_+(0)-i\Gamma_*\left(  u+(\Gamma(A+\lambda)^{-1})^*v_{+}(0) \right)\end{array} \quad\vphantom{\U} \hspace{-5mm}\quad \right\}.  \nonumber 
\end{eqnarray}
\end{definition}

Here, $H^1 (\Rr_+,E)$   and $H^1(\Rr_-, E_*)$ are the Sobolev spaces of vector-valued functions on $\Rr_+$ and $\Rr_-$ respectively, taking values on the auxiliary Hilbert spaces $E$, or   correspondingly   $E_*$.  The norm of the spaces is given by 
$$
\int_0^\infty  (  \norm{ v_+'(\xi)}^2_E +\norm{v_+(\xi)}^2_{E}) d \xi=:\norm{v_+}^2_{H^1(\Rr_+, E)}
$$
and
$$
\int_{-\infty}^0  (  \norm{ v_-'(\xi)}^2_{{E_*}} +\norm{v_-(\xi)}^2_{E_*}) d \xi=:\norm{v_-}^2_{H^1(\Rr_-, E_*)}.
$$
{
It follows that both  $v_+(0) := \lim_{\xi\to 0^+}  v_+(\xi)
$  and
$v_-(0):= \lim_{\xi \to 0^-} v_+(\xi)$ { are well-defined 
in   the $E$  and   $E_*$ topologies,  respectively.}}
\begin{Remark}  
{Note that  whenever $u\in H$ and $v_-(0)\in E_*$  are such that $[u + (\Gamma_*(A^*+\mu_0)^{-1})^*v_-(0))]\in \mathcal D (A)$ for some $\mu_0\in  \C_-$, then
$
[u+(\Gamma_*(A^*+\mu)^{ -1})^*
v_-(0)]\in \mathcal
D(A)
$
for all $\mu\in \C_-$ (see \cite[Lemma 5.2]{BMNW20}). Similarly, the  second condition
$[u+(\Gamma(A+\lambda)^{-1})^*u_+(0)]\in \mathcal D(A^*)$  does not depend on the choice of $\lambda \in  \C_+$.   Although,   we denoted  vectors in the description of $D(\cL_{tr})$ from $E_*$ and $E$ in the form $v_-(0)$ and $v_+(0)$ to be suggestive of their role in applications, both vectors can be chosen arbitrarily in the respective spaces.}

We should mention that in (\ref{def:L}), conditions {\bf (I)} and {\bf (II)} are equivalent, {see e.g.} \cite[Lemma 5.4]{BMNW20}.
Finally, we see that there are only four
free parameters in the domain of $\mathcal L_{tr}$. 
These can be chosen as
\begin{enumerate}
\item
the vector   $v_+(0)\in E$
\item
a vector $h\in \mathcal D(A^*)$ such that $u:=h-(\Gamma (A+\lambda)^{-1})^* v_+(0)$,
one can take $\lambda=i$ here for example;
\item
two vector-valued functions {$w_+ \in H^1 (\Rr_+, E)$ and $w_-\in H^1 (\Rr_-, E_*)$ with $w_+(0)=0, w_-(0)=0$}.
\end{enumerate}

{Indeed, according to the equivalence of the conditions {{\bf (I)}   and {\bf (II)} of (\ref{def:L})}, one can choose  a vector $(v_-, u, v_+)$  from $\mathcal{ D(L}_{tr})$ such that
\begin{enumerate}
\item
$v_+(\xi):=  w_+(\xi)+v_+(0) e^{-\xi}, \; \xi\geq 0$
\item
$v_-(\xi):=  w_-(\xi)+ (S(-\overline \lambda) v_+(0) -i \Gamma _* h)e^{\xi}, \; \xi\leq 0$
\item
$u:=h-(\Gamma (A+\lambda)^{-1})^* v_+(0)$
\end{enumerate}
for any  fixed $\lambda \in  \C_+$, say $\lambda=i$.}
 \end{Remark}

In order to introduce the formula for the  dilation $\mathcal {L}_{tr}$ we need the following  definition.
\begin{Definition} \label{def:T} 
Let $\mu \in  \C_-$  and $\lambda \in  \C_+$  be fixed.   For any vector $U= (v_-,u,v_+)   \in \mathcal { D(L}_{ tr})$  define two operators $T$ and $T_*: \mathcal { D(L}_{ tr})\to H$ by
\be\label{eq:T} TU:= A^*(u+{(\Gamma (A+\lambda)^{-1}})^{*}v_+(0))+\overline \lambda (\Gamma (A+\lambda)^{-1})^*v_+(0)\ee
  and
\be\label{eq:Tstar} T_*U= A(u+{(\Gamma_*(A^*+\mu)^{-1})}^*v_-(0))+\overline \mu (\Gamma _*(A^*+\mu)^{-1})^*v_-(0)\ee
\end{Definition}

{We note that $T\equiv T_*$ on $\mathcal { D(L}_{ tr})$ and therefore both are independent of $\lambda$ and $\mu$, see \cite[Lemma 5.7  \& Corollary 5.8]{BMNW20}.}

Now the selfadjoint dilation of  the maximally dissipative operator $A$ in $H$ can be defined as follows:
\begin{Definition}\label{def:op} {For any vector $U= (v_-,u,v_+)   \in \mathcal { D(L}_{ tr})\subset L_2(\Rr_-, E_*)\oplus H\oplus L_2(\Rr_+,E)$, set}
$$  {  \mathcal L_{tr} U} \equiv \mathcal L _{tr} \left (\begin{array}{c} {v}_-\\u\\{v}_+\end{array} \right ) = \left (\begin{array}{c} iv'_-\\TU\\{iv'}_+\end{array} \right ).
$$
\end{Definition}

{Therefore, the operator $\mathcal L_{tr}$  acts,   both in the incoming  channel  $$D_-=(L_2(\Rr_-, E_*), 0, 0)\equiv L_2(\Rr_-, E_*)$$
and in the outgoing   one $$D_+=(0,0, L_2(\Rr_+, E))\equiv L_2(\Rr_+, E)$$
in the sense of the Lax-Phillips scattering theory  \cite{LP67},  as a first order differentiation operator on $v_-$ and $v_+$, respectively. 
 This operator, being the generator of the standard shift   semigroups on the half lines, { gives  a  justification   to } the name ``translational  form'' for {this   realisation }  of the dilation.
{    The ``middle'' operator $TU$ explicitly uses the  operator $A^*$,  to make  a  coupling  between the two terms $v_\pm(0)$  and of course between  both channels.}

The main result {of \cite{BMNW20}  is the Theorem 7.6:}
\begin{proposition}  \label{prop1}
The operator $\mathcal L_{tr}$ in the Hilbert space  $\mathcal H_{tr }=L_2(\Rr_-, E_*)\oplus H\oplus L_2(\Rr_+,E)$, defined in {Definitions \ref{def:L}  and \ref{def:op}} is a minimal selfadjoint dilation of the maximally dissipative operator $A$ in  $H$, i.e.
\begin{enumerate}
\item  $$ { \mathcal {L} _{tr}}= {  \mathcal  {L}_{tr}^*},$$

\item { for $U=(0,u,0)$
$$P_H(\mathcal {L}_{tr} - \lambda)^{-1}U=\left \{  \begin{array}{cc}  (A-\lambda)^{-1}u, & \lambda\in \C_-,\\ (A^*-\lambda)^{-1}u, & \lambda\in {  \mathbb C ^+,}
\end{array}  \right .
$$
where $P_H$ is the projection onto the second  component in $\mathcal H_{tr}$: $P_H(v_-,u,v_+)=
(0,u,0)$.}
\item
Define the completely non-selfadjoint subspace $H_{cns}$ of $A$ as in {\eqref{eq:cns}
and its orthogonal complement $H_{sa}$.}
Then the subspace $(0, H_{sa},0)\subset \mathcal H_{tr}$ is {a reducing subspace} for the dilation $\mathcal L_{tr}$,  and $\mathcal L_{tr}$ restricted  {to}  $(0,H_{sa},0)$ { is  }
$(0,A_{sa},0)   $, where $A_{sa}:=A|_{H_{sa}}$ is the selfadjoint part of $A$.  {Further  the  operator $\mathcal L_{tr}$ restricted to  {the second reducing subspace
$$ L_2(\Rr_-, E_*)\oplus H_{cns}\oplus L_2(\Rr_+, E),$$
the orthogonal complement of $(0, H_{sa},0)$,}
is the minimal selfadjoint dilation of $A|_{H_{cns}}$. }  Moreover
$$
\clos \left  ( \Span_{\lambda \not \in \Rr}  (\mathcal L_{tr}-\lambda)^{-1}
\left  ( \begin{array}{c}   L_2(\Rr_-,E_*)\\0\\L_2(\Rr_+, E)\end{array}  \right )  \right )= \left ( \begin{array}{c}   L_2(\Rr_-,E_*)\\H_{cns}\\L_2(\Rr_+, E)\end{array}
 \right ).
 $$
\end{enumerate}
\end{proposition}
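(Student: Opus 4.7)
The plan is to verify the three assertions in order by unpacking Definitions \ref{def:L}, \ref{def:T}, \ref{def:op} and combining the identities already at our disposal---the Lagrange identities (Lemma \ref{lem:Lagrange}), the abstract Green identities (Lemma \ref{lem:Green}), and the Langer decomposition (Theorem \ref{thm:langer}). This essentially recapitulates Theorem 7.6 of \cite{BMNW20}, so I merely outline the strategy.

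For (1), I would first establish symmetry by computing $\langle \mathcal{L}_{tr}U,W\rangle_{\mathcal{H}_{tr}} - \langle U,\mathcal{L}_{tr}W\rangle_{\mathcal{H}_{tr}}$ for $U=(v_-,u,v_+)$ and $W=(f,w,g)$ in $\mathcal{D}(\mathcal{L}_{tr})$. Integration by parts in each $L_2$-channel produces boundary terms at $\xi=0$ involving $v_\pm(0)$, $f(0)$, $g(0)$. In the middle I would represent $TU$ via \eqref{eq:T} and $TW$ via \eqref{eq:Tstar}; then the Lagrange identities \eqref{eq:Lagrange} and \eqref{eq:LagrangeStar} contribute further terms of the form $i\langle\Gamma(\cdot),\Gamma(\cdot)\rangle_E$ and $-i\langle\Gamma_*(\cdot),\Gamma_*(\cdot)\rangle_{E_*}$. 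The boundary conditions \textbf{(I)} and \textbf{(II)}, which couple $v_\pm(0)$ to the $H$-component through the \v{S}traus characteristic function $S$, are calibrated precisely so that all of these contributions cancel against one another. To upgrade symmetry to selfadjointness, it suffices to check $\mathrm{Ran}(\mathcal{L}_{tr}\pm i)=\mathcal{H}_{tr}$, which reduces to the explicit inversion performed in (2).

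For (2), fix $U=(0,u,0)$ and $\lambda\in\C_-$. Solving $(\mathcal{L}_{tr}-\lambda)V=U$ in the channels gives $v_\pm(\xi)=v_\pm(0)e^{-i\lambda\xi}$; the $L_2$-integrability condition on $\R_-$ forces $v_-\equiv 0$, while $v_+$ decays on $\R_+$ and is parametrised by $v_+(0)$. Condition \textbf{(I)} with $v_-(0)=0$ then determines $v_+(0)$ through $\Gamma$ applied to the $H$-component, and the middle equation $(T-\lambda)V=u$ combined with the representation \eqref{eq:T} of $T$ reduces to $(A-\lambda)v=u$ in $H$. Projecting onto $H$ yields $(A-\lambda)^{-1}u$, matching Proposition \ref{dilation}. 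The case $\lambda\in\C_+$ is symmetric: now $v_+\equiv 0$, $v_-$ is nontrivial, condition \textbf{(II)} pins $v_-(0)$ down, and the equivalent representation \eqref{eq:Tstar} yields $(A^*-\lambda)^{-1}u$.

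For (3), the fact that $(0,H_{sa},0)$ is reducing and that $\mathcal{L}_{tr}$ restricts there to $A_{sa}$ is direct: by the argument inside the proof of Theorem \ref{thm:langer}, every $u\in H_{sa}$ satisfies $\Gamma(A+\lambda)^{-1}u\equiv 0$ and $\Gamma_*(A^*+\mu)^{-1}u\equiv 0$, so $v_\pm\equiv 0$ is compatible with \textbf{(I)} and \textbf{(II)}, and both \eqref{eq:T} and \eqref{eq:Tstar} collapse to $Tu=Au$. For minimality of the complementary restriction, I would apply the resolvent formula from (2) to channel inputs $(f,0,g)$ and read off the induced $H$-component: conditions \textbf{(I)} and \textbf{(II)} force this component to contain combinations of $(\Gamma(A+\lambda)^{-1})^*g(0)$ and $(\Gamma_*(A^*+\mu)^{-1})^*f(0)$. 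Varying $\lambda$, $\mu$ and the boundary data, these vectors densely span $H_{cns}$ by formula \eqref{eq:cns}, which proves the stated closed-span identity. The main technical obstacle is the bookkeeping required to track how the boundary data $v_\pm(0)$ propagate into the $H$-component under the resolvent; this is where Lemma \ref{lem:Green} is invoked repeatedly to disentangle terms involving $A$, $A^*$, $\Gamma$, and $\Gamma_*$.
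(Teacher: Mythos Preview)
The paper does not actually prove this proposition: it is quoted verbatim as Theorem 7.6 of \cite{BMNW20} and used as a black box on which the spectral-form construction is subsequently built. So there is no ``paper's own proof'' to compare against here.

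That said, your outline is faithful to the strategy of \cite{BMNW20}. The symmetry calculation in (1) via integration by parts in the channels plus the Lagrange identities \eqref{eq:Lagrange}--\eqref{eq:LagrangeStar}, with the coupling conditions \textbf{(I)}/\textbf{(II)} engineered to cancel the boundary terms, is exactly how it goes. Your resolvent computation in (2) is correct, including the observation that $L_2$-integrability on the appropriate half-line kills one of $v_\pm$ depending on the sign of $\Im\lambda$. For (3), invoking the vanishing of $\Gamma(A+\lambda)^{-1}u$ and $\Gamma_*(A^*+\mu)^{-1}u$ on $H_{sa}$ (from the proof of Theorem \ref{thm:langer}) to see that $(0,H_{sa},0)$ is reducing, and then reading off the $H$-component of the resolvent applied to channel data to recover the span in \eqref{eq:cns}, is the right idea.

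One small caution on bookkeeping: in (2) you need to use the representation \eqref{eq:Tstar} of $T_*$ (not \eqref{eq:T}) when $\lambda\in\C_-$, because with $v_-(0)=0$ the formula $T_*U = A(u+(\Gamma_*(A^*+\mu)^{-1})^*v_-(0))+\overline\mu(\Gamma_*(A^*+\mu)^{-1})^*v_-(0)$ collapses directly to $Au$, whence the middle equation becomes $(A-\lambda)\tilde u = u$. Using \eqref{eq:T} instead would leave a residual $(\Gamma(A+\lambda)^{-1})^*v_+(0)$ term that then has to be eliminated via the boundary condition---doable, but more circuitous. You have the two representations swapped in your sketch.
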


Now we are ready to transform the {translational} form  of the dilation $\mathcal L_{tr}$, given by  Proposition   \ref{prop1}, into its unitarily equivalent spectral form.   The part of $\mathcal L_{tr}$, corresponding  to the completely non-selfadjoint component of $A$, is presented as the  multiplication operator by an independent variable $k\in \Rr$, in an $L_2$-space of vector-valued functions on $\Rr$.  { The existence of this  form {is clear from the Foias Theorem \ref{thm:Foias}}, but our translational form {describes it explicitly}. This  allows us to preserve  information about the original form of the operator $A$ under the requisite transformation. 

In what follows,   we will in some places assume that $A$ is completely non-selfadjoint. This will allow us to ignore the selfadjoint part  $A|_{H_{sa}}$ of $A$,  since in the minimal selfadjoint dilation this is  reflected by  the operator $A|_{H_{sa}}$  on $(0,H_{sa},0)$  
and can be   studied   using the classical spectral theorem for  selfadjoint operators in Hilbert space. Both parts, $A|_{H_{cns}}$  and $A|_{H_{sa}}$, should be considered  independently using these { different tools,  i.e.~the functional model  for the first part in $\mathcal H_{cns}$
and the standard spectral theorem  for the second  part in $\mathcal H_{sa}$.}
In view of this   we will mainly  concentrate  on the minimal selfadjoint dilation of a completely non-selfadjoint maximally dissipative operator $A$, without any  loss of generality.

{As the first step we consider two maps $\mathcal F_\pm$ transforming the translational form  of  the  Hilbert space $\mathcal H_{tr}=L_2 (\Rr_-, E_*) \oplus H\oplus L^2 (\Rr_+,E)$ into $L_2 (\Rr, E)$ and $L_2 (\Rr, E_*)$, respectively.}  

\begin{Definition}
For any vector $(v_-,u,v_+)\in \mathcal H_{tr}$ { and a.e.~$k\in\R$, set
$$ \left[\mathcal F_+\left ( \begin{array}{c} v_-\\u\\v_+\end{array}\right )\right] (k):=
-\dfrac1{\sqrt{2 \pi}} \Gamma (A-k+i0)^{-1}u + S^*(k) \hat v_-
(k)+\hat v_+(k)\in L_2(\Rr, E) $$
and
$$ \left[\mathcal F_-\left ( \begin{array}{c} v_-\\u\\v_+\end{array}\right )\right](k):=
-\dfrac1{\sqrt{2 \pi}} \Gamma_* (A^*-k-i0)^{-1}u + S(k) \hat v_+
(k)+\hat v_-(k)\in L_2(\Rr, E_*), $$
where $S(k):= S(k+i0)$ and $\hat v_\pm (k)$ are the  Fourier transforms of the vector-valued functions $v_\pm (\xi)$ extended  by $0$ onto the complementary  
semiaxis $\Rr_{ \mp}$:
$$\hat v_+ (k)=\dfrac1{\sqrt{2 \pi}}
\int_0^\infty e^{i \xi k} v_+(\xi)  d \xi, 
$$
 $$\hat v_-(k)=\dfrac1{\sqrt{2 \pi}}
\int_{-\infty}^0 e^{i \xi k} v_-(\xi)  d \xi. $$
}
\end{Definition}

{Using the canonical identification of an  analytic function from the  Hardy class in the upper or lower half-plane with its boundary values on the real line $\Rr$,
by the Paley-Wiener theorem \cite{Koosis,{SFBK10}}, we have}
$$  \hat v_{\pm}(k)\in H_2^ \pm := H_2 (\mathbb C^\pm).$$ 
{Similarly,   $\Gamma (A-k+i0)^{-1}  u$ and $\Gamma_*(A^*-k-i0)^{-1}u$  are the boundary values of $\Gamma (A-\lambda)^{-1}u$ and  $\Gamma_*( A^*- \lambda)^{ -1}u $,  where $\lambda \to k \mp i0$ in the lower and upper half-plane, respectively.}  The existence of the non-tangential boundary values {of the resolvents and $S(k+i0)$ and $S^*(k+i0)$,} in the strong {operator} topology of   the Hilbert  spaces $E$ and $E_*$ for a.e.~$k\in \mathbb R$   is guaranteed by the B.~Sz-Nagy Theorem \cite{SFBK10}.  Since the boundary values  { $S(k)$ and $S^*(k)$ are contractions,} we have that all three terms in   the formulae for $\mathcal F_\pm $ are { $L_2(\Rr,E)$- or  $L_2( \Rr,E_*)$-vector-valued} functions on $\Rr$. 
 Therefore the maps $\mathcal F_\pm $ are well  defined on the whole space $\mathcal H_{tr}$.  Moreover, {from Theorem \ref{thm:2.1},}  we have by the triangle inequality
 \begin{eqnarray*}
 &\norm{\mathcal F_+   \left (
 \begin{array}{c} v_-\\u \\ v_+  \end{array} \right ) }_{{L_2(\Rr,E)}}
  &\leq
 \norm{u}_H + \norm{S^* \hat v_-}_{L_2(\Rr,E)}+\norm{\hat v_+}_{L_2(\Rr,E)} \\
  &&\leq \norm{u}_H+\norm{\hat v_-}_{{L_2(\Rr,E_*)}}+\norm{ \hat v_+}_{L_2(\Rr,E)}
	=\norm{u}_H +\norm{v_-}_{{L_2(\Rr_-,E_*)}} +\norm{v_+}_{{L_2(\Rr_+,E)}}\\
  && \leq \sqrt3 \norm{ (v_-, u, v_+)}_{\mathcal H_{tr}}.
  \end{eqnarray*}
  Here we used   the Parseval identity \cite{Koosis} and    the contraction property of  $ S^*$.  The case of $ \mathcal F_-$  can be considered similarly. {Thus the maps are bounded  operators from $\mathcal H_{tr}$  to $L_2 (\Rr, E )$  or   to $L_2 (\Rr, E_*)$, respectively. }
 
  Following the ideas in the paper \cite{Nab81} we use the maps $\mathcal F_\pm$  for the construction  of  the spectral form of the dilation $\mathcal L_{tr}$. {We next  introduce a new Hilbert space:}
  $$\mathcal H_{sp}:=L_2 \left (\Rr, E\oplus E_*;  \left (\begin{array}{cc}  I_E  & S^*(k) \\ S(k) & I_{E_*} \end{array} \right )\right ).
  $$
 
  Our new version of the functional model Hilbert space $\mathcal H_{sp}$, referred to as Pavlov's symmetric form of the functional model,  is by definition {the closure in a weighted norm of the space of    vectors   of the form}
  $\left ( \begin{array}{c} \tilde g \\ g \end{array} \right )$, where $\tilde g = \tilde g (k)\in L_2 (\Rr, E)$  and $g=g(k)\in L_2 (\Rr, E_*)$.  The norm of the vector in $\mathcal H_{sp}$ is defined as follows.
  \begin{eqnarray*}
  &&  \norm{    \left ( \begin{array}{c} \tilde g \\ g \end{array} \right ) }_{\mathcal H_{sp}}^2:= \int_\Rr   \left \langle   \left  (
  \begin{array}{cc} I_E & S^*(k)\\ S(k) & I_{E_*} \end{array} \right ) \left (  \begin{array}{c} \tilde g \\ g \end{array} \right ) ,  \left ( \begin{array}{c}         \tilde g \\ g \end{array} \right )  \right \rangle
_{E\oplus E_*} dk.
\end {eqnarray*}
{Simple algebraic manipulations give the following identities.
  \begin{eqnarray}\label{eq:Hsp1}
    \norm{    \left ( \begin{array}{c} \tilde g \\ g \end{array} \right ) }_{\mathcal H_{sp}}^2&=&
 \norm{   \tilde g (k)+S^*(k) g(k)}^2_{L_2 (\Rr, E)}+
 \int_\Rr \langle (I-S(k) S^*(k) ) g(k), g(k)\rangle  _{E_*}\  dk \\
&=& \norm{ S(k)  \tilde g (k)+g(k)}^2_{L_2 (\Rr, E_*)}+
 \int_\Rr \langle (I-S^*(k) S(k) )   \tilde g (k),   \tilde g (k)\rangle  _{E}\  dk. \label{eq:Hsp2}
 \end {eqnarray}}
 Since the boundary values $S^*(k)\equiv ( S( k + i 0))^* ,  S(k)\equiv S(k+i 0)$ of the { contraction $S(z)$, $z \in \C_+$ are also contractions}, we obviously have
\be\label{eq:est}\norm  { \left ( \begin{array}{c} \tilde g \\ g \end{array} \right ) }_{\mathcal H_{sp}}\geq \norm{ \tilde g + S^*g}_{L_2 (\Rr, E)}\quad\hbox{  and }\quad \norm  { \left ( \begin{array}{c} \tilde g \\ g \end{array} \right ) }_{\mathcal H_{sp}}\geq \norm{S \tilde g + g}_{L_2 (\Rr, E_*)},
\ee   
where we omitted the   arguments of the functions on the right hand side terms for  notational convenience.  The elements of $\mathcal H_{sp}$     are the limits    of vectors from $L_2(\Rr, E))\oplus L_2(\Rr, E_*)$   and  we  will still denote them as $(\tilde g(k), g(k))$,  although {this is symbolic, especially since the matrix  weight
$\left  (
  \begin{array}{cc} I_E & S^*(k)\\ S(k) & I_{E_*} \end{array} \right )  $
  may be degenerated at a set of positive Lebesgue measure  on $\Rr$}.  On the other hand, { due to \eqref{eq:est},} the expressions $\tilde h(k):=  \tilde g (k) + S^* (k) g(k)$  and   $\  h(k):=  S(k) \tilde g (k) +  g(k)$  are still    $L_2(\Rr, E)$  and $L_2 (\Rr, E_*)$  functions respectively, even after taking a closure.
  Note that the   $L_2$-vector valued functions $(\tilde h(k), h(k))$   form a  de Branges \mm{\cite{dB68}} version of the functional model \cite{NV86}.
		
    Alternatively, {the pairs} 
   $$
   (  \tilde g(k)+S^*(k) g(k), g(k)  )
  = (  \tilde h (k), g(k))\in L_2(\Rr, E)\oplus L_2 (\Rr, E_*;(I_{E_*}-S(k)S^*(k))^{1/2}) 
  $$
  and
  $$
 ( S(k) \tilde g(k)+  g(k), { \tilde g(k)}  )
  =  (   h (k), \tilde g(k))\in  L_2(\Rr,E_*) \oplus  L_2(\Rr, E;   (I_E-S^*(k)S(k))^{1/2})
  $$
  give a transformation to the Sz-Nagy-Foias form of the functional model \cite{SFBK10}.
  We will discuss this in  more detail later.
 
{
\begin{lemma}\label{lem:uniquedet}
  A vector
  $\left ( \begin{array}{c} \tilde g \\ g \end{array} \right )\in \mathcal H_{sp}$
  is uniquely determined by the two vector functions $\tilde h(k)=\tilde g(k)+S^*(k)g(k)$  and $h(k)=S(k) \tilde g(k)+ g(k)$.
\end{lemma}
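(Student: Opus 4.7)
The plan is to show that the linear map $v \mapsto (\tilde h(v), h(v))$ from $\mathcal{H}_{sp}$ into $L_2(\Rr, E) \oplus L_2(\Rr, E_*)$ is injective; by linearity it suffices to prove that whenever both $\tilde h = 0$ and $h = 0$ in $L_2$, one has $\|v\|_{\mathcal{H}_{sp}} = 0$.

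The first step is a direct computation on the pre-closure subspace $L_2(\Rr, E) \oplus L_2(\Rr, E_*)$, which is dense in $\mathcal{H}_{sp}$ by construction. For $v = (\tilde g, g)$ in this subspace, observe that
\[
\left(\begin{array}{c} \tilde h \\ h \end{array}\right) = \left(\begin{array}{cc} I_E & S^*(k) \\ S(k) & I_{E_*} \end{array}\right)\left(\begin{array}{c} \tilde g \\ g \end{array}\right),
\]
so expanding the weighted inner product gives the clean identity
\[
\|v\|_{\mathcal{H}_{sp}}^2 = \int_\Rr \left[\langle \tilde h(k), \tilde g(k)\rangle_E + \langle h(k), g(k)\rangle_{E_*}\right] dk.
\]
By polarisation, for any two pre-closure vectors $v, w = (\tilde g_w, g_w)$ one similarly obtains
\[
\langle v, w\rangle_{\mathcal{H}_{sp}} = \int_\Rr \left[\langle \tilde h(v), \tilde g_w\rangle_E + \langle h(v), g_w\rangle_{E_*}\right] dk.
\]
In particular, two pre-closure pairs with the same $(\tilde h, h)$ differ by a vector of zero $\mathcal{H}_{sp}$-norm.

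The second step extends this to arbitrary $v \in \mathcal{H}_{sp}$. Writing $v = \lim v_n$ with $v_n = (\tilde g_n, g_n) \in L_2 \oplus L_2$, the estimates \eqref{eq:est} imply $\tilde h(v_n) \to \tilde h(v)$ and $h(v_n) \to h(v)$ in the relevant $L_2$ spaces. For any test vector $w \in L_2(\Rr, E)\oplus L_2(\Rr, E_*)$, continuity of the $\mathcal{H}_{sp}$-inner product combined with the polarised identity of Step 1 and the Cauchy--Schwarz inequality in $L_2(\Rr, E \oplus E_*)$ yields, in the limit,
\[
\langle v, w\rangle_{\mathcal{H}_{sp}} = \int_\Rr \left[\langle \tilde h(v), \tilde g_w\rangle_E + \langle h(v), g_w\rangle_{E_*}\right] dk.
\]
If $\tilde h(v) = 0$ and $h(v) = 0$, the right-hand side vanishes for every $w$ in the dense subspace $L_2 \oplus L_2$, forcing $v = 0$ in $\mathcal{H}_{sp}$ and completing the argument.

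The only genuine obstacle is the conceptual one in Step 2: elements of $\mathcal{H}_{sp}$ are symbolic pairs arising as limits in the weighted norm and need not be honest $L_2$-pairs, so $\|v\|_{\mathcal{H}_{sp}}^2$ cannot be computed by the pre-closure integral directly. Pairing $v$ against a dense family of bona fide $L_2$-test vectors $w$ evades this difficulty, because on the right-hand side only the unambiguous quantities $\tilde h(v), h(v)$ and the honest $L_2$-components of $w$ ever appear.
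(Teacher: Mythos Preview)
Your proof is correct, but takes a genuinely different route from the paper. The paper argues directly with the norm: from $\tilde h=0$ and $h=0$ one computes $h-S\tilde h=(I_{E_*}-SS^*)g=0$, hence $(I_{E_*}-SS^*)^{1/2}g=0$, and then the decomposition \eqref{eq:Hsp1} of $\|(\tilde g,g)\|_{\mathcal H_{sp}}^2$ gives zero; the limiting argument is then carried out on the expression $(I_{E_*}-SS^*)^{1/2}g$. You instead observe that the weight matrix sends $(\tilde g,g)$ to $(\tilde h,h)$, so the $\mathcal H_{sp}$-inner product of $v$ against any pre-closure test vector $w=(\tilde g_w,g_w)$ reads $\int[\langle\tilde h(v),\tilde g_w\rangle+\langle h(v),g_w\rangle]$, and you pass to the limit in this duality pairing rather than in the norm. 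Your argument is arguably cleaner: it never invokes the square root $(I-SS^*)^{1/2}$ or the norm decomposition \eqref{eq:Hsp1}, and the passage to the closure is immediate since both sides of your pairing identity are manifestly continuous in $v$. The paper's approach, on the other hand, makes visible the connection to the Sz.-Nagy--Foia\c{s} form of the model space via $(I-SS^*)^{1/2}g$. (A minor point: your ``by polarisation'' is a slight misnomer---the inner-product identity follows directly from the definition of the weighted inner product, not from polarising the norm formula.)
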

}
\begin{proof}
  Assume that {$\tilde h(k)=0$  and  $h(k)=0$}  for almost every $k\in \Rr$   and  recall that the operator functions $S(k)$ and $S^*(k)$  are also only well defined for a.e. $k\in \Rr$.
  Then {almost everywhere}
  $$0=  h(k)-S(k)\tilde h(k)={S(k)\tilde g(k) + g(k)-S(k)(\tilde g(k) +S^*(k) g(k))}=
  (I_{E_*}-S(k)S^*(k))g(k),$$
  and  therefore
$  (I_{E_*}-S(k)S^*(k))^{1/2}g(k)=0$
almost everywhere.
Now comparing  this    with {\eqref{eq:Hsp1}},
we have that
$\norm{ \left ( \begin{array}{c} \tilde g \\ g \end{array} \right ) }_{ \mathcal H_{sp}}= 0$.  { This   is merely a formal proof, but it  can be made   rigorous  via      a limiting argument as we now show. }

{Let }
 $$      \lim_{n\to\infty }  \left ( \begin{array}{c} \tilde g_n(k)   \\   g_n(k)
 \end{array} \right )  =  {\left ( \begin{array}{c} \tilde g(k) \\  g(k) \end{array}
 \right )}
 $$
{ in $\cH_{sp}$ with
 $\tilde g_n(k)\in L_2 (\Rr, E)$ and $ g_n(k) \in L_2 (\Rr, E_*)$. Then setting
$ \tilde h_n:=\tilde g_n+ S^*g_n$ and $h_n :=S\tilde g_n + g_n,$  we have
 $\tilde h = \lim_ {n  \to\infty } {\tilde h_n  } =0$  
   and  $h=\lim_{n\to \infty}  h_n=0$   in the norm of $L_2(\Rr,E)$ and $L_2(\Rr, E_*)$,  respectively. So,}
 \begin{eqnarray*}
  0&=& h -S\tilde h=
 \lim_{n\to \infty}( h_n - S\tilde h_n)\\
&=& \lim_{n \to  \infty } ( S\tilde g_n + g_n - S( \tilde g _n+ S^*g_n))\\
&=&    \lim_{n \to  \infty } (  g_n - SS^*  g_n)=
{ ( I_{E_*}-SS^*)^{1/2}  (  I_{E_*}-SS^*)^{1/2}g.}
  \end{eqnarray*}
  { So $(I_{E_*}-SS^*)^{1/2} g=0$  as a function   from  $L_2 (\Rr, E_*)$,} {and the lemma follows from \eqref{eq:Hsp1}, as above.}
	\end{proof}
   
 {   In what follows we will usually omit  this type of   argument based on a limiting  procedure of approximating {elements 
  $\left(  \begin{array}{c} \tilde g \\g\end{array}\right ) \in\cH_{sp}$ by $L_2$-vector-valued functions
    and instead proceed   formally as we indicated in the proof of the lemma}.   However,   we note that a rigorous proof  along the indicated lines 
 can  always be  performed.}
  {Operating} with the symbol $\left ( \begin{array}{c} \tilde g \\g\end{array}\right )$  (in Pavlov's symmetric form) is often more convenient,  especially  taking into consideration that the $L_2$-vector-valued  functions $\tilde h(k)$ and $h(k)$ are also {only defined for a.e.~$k\in\R$.}

 Now we are ready to formulate the main  result.

 \begin{theorem}  \label{thm4.5}
 { Let $\mathcal L_{tr}$ be   a  minimal selfadjoint dilation of a completely non-selfadjoint maximally dissipative operator $A$  in a Hilbert space $H$.}
  Then there exists a unique  transformation {$\Phi$} of  the translational form Hilbert space $\mathcal H _{tr}=L_2 (\Rr, E_*) \oplus H \oplus L_2(\Rr_+,E)$ onto  the spectral form  of the Hilbert space
  $$\mathcal H_{sp}=L_2 \left (\Rr, E\oplus E_*;  \left (\begin{array}{cc}  I_E  & S^*(k) \\ S(k) & I_{E_*} \end{array} \right )\right )
  $$
  {such that the image in $\mathcal H_{sp}$ of the translational    form
  of the selfadjoint dilation $\mathcal L_{tr}$  is the multiplication operator by the independent variable  $k \in  \Rr$
	satisfying the explicit formula
 \begin{equation}
 \left \{
    \begin{array}{c}
  \tilde g(k)+ S^*(k)g(k) =\mathcal F_+(v_-, u, v_+), \\
 S(k) \tilde g(k)+g(k) =\mathcal F_-(v_-, u, v_+),
  \end{array} \right .  
  \label{eq10}
  \end{equation}}
	{where $\Phi(v_-,u,v_+)=\left ( \begin{array}{c} \tilde g \\g\end{array}\right )$.}
  \end{theorem}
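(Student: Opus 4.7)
My plan is to define $\Phi$ directly via \eqref{eq10} and verify in turn (i) well-definedness of $\Phi:\cH_{tr}\to\cH_{sp}$, (ii) isometry, (iii) intertwining $\Phi\cL_{tr}=M_k\Phi$, and (iv) surjectivity together with uniqueness. For $U=(v_-,u,v_+)\in\cH_{tr}$, set $\tilde h:=\mathcal F_+U\in L_2(\Rr,E)$ and $h:=\mathcal F_-U\in L_2(\Rr,E_*)$; Lemma~\ref{lem:uniquedet} guarantees at most one $\Phi U=(\tilde g,g)\in\cH_{sp}$ satisfying $\tilde g+S^*g=\tilde h$ and $S\tilde g+g=h$. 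Existence I would establish by exhibiting honest $L_2$ representatives on a dense subset (for instance with $u\in D(A)\cap D(A^*)$, where the compatibility $(I-SS^*)g=h-S\tilde h$ can be solved directly), and then closing up in the norm of $\cH_{sp}$.

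The \emph{isometry} step is the first substantive computation. Adding the two norm representations \eqref{eq:Hsp1}--\eqref{eq:Hsp2} gives
\[2\norm{\Phi U}_{\cH_{sp}}^2=\norm{\mathcal F_+U}_{L_2(\Rr,E)}^2+\norm{\mathcal F_-U}_{L_2(\Rr,E_*)}^2+\int_\Rr\ll(I-S^*S)\tilde g,\tilde g\rr_E\,dk+\int_\Rr\ll(I-SS^*)g,g\rr_{E_*}\,dk.\]
Expanding $\mathcal F_\pm U$ into its three summands, the diagonal contributions are handled by Parseval's identity for $\hat v_\pm$ and by the Corollary to Theorem~\ref{thm:2.1} (which evaluates $\int_\Rr\norm{\Gamma(A-k+i0)^{-1}u}_E^2\,dk$ and its $\Gamma_*$-analogue as $2\pi\norm{u}_H^2$ minus an $\eps$-remainder); the cross terms between the $\Gamma$-factors and $\hat v_\pm$ are reduced, via \eqref{eq:S}, \eqref{Sstar} and items~(6)--(7) of Lemma~\ref{lemma:adj}, to integrals that telescope against the defect-operator terms on the right. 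After all cancellations, in which the $\eps$-remainders recombine into the defect integrals, the total equals $2\norm{U}^2_{\cH_{tr}}$.

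For the \emph{intertwining} on $D(\cL_{tr})$, integration by parts (after extension by zero) yields $\widehat{iv_+'}(k)=k\hat v_+(k)-\tfrac{i}{\sqrt{2\pi}}v_+(0)$ and $\widehat{iv_-'}(k)=k\hat v_-(k)+\tfrac{i}{\sqrt{2\pi}}v_-(0)$, so the required identity $\mathcal F_+(\cL_{tr}U)=k\mathcal F_+U$ reduces to
\[\Gamma(A-k+i0)^{-1}(TU-ku)=iS^*(k)v_-(0)-iv_+(0).\]
Writing $TU=A^*h+\overline\lambda(h-u)$ with $h:=u+(\Gamma(A+\lambda)^{-1})^*v_+(0)\in D(A^*)$ and invoking the dual form of \eqref{Sstar}, namely $\Gamma(A-z)^{-1}(A^*-z)h=S^*(\bar z)\Gamma_*h$ for $z\in\C_-$ (which follows from \eqref{Sstar} and Lemma~\ref{lemma:adj}(2)), one rewrites the left hand side as $S^*(k)\Gamma_*h+(k+\overline\lambda)\,\Gamma(A-k+i0)^{-1}(\Gamma(A+\lambda)^{-1})^*v_+(0)$. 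Domain condition \textbf{(II)} supplies $\Gamma_*h=iv_-(0)-iS(-\overline\lambda)v_+(0)$, while the boundary value of \eqref{eq:SStarS1} with $\bar w=k-i0$, $z=-\overline\lambda$ gives $(k+\overline\lambda)\,\Gamma(A-k+i0)^{-1}(\Gamma(A+\lambda)^{-1})^*=-i[I-S^*(k)S(-\overline\lambda)]$; the $S^*(k)S(-\overline\lambda)v_+(0)$ contributions then cancel and the identity falls out. The computation for $\mathcal F_-$ is dual, using condition \textbf{(I)} and \eqref{eq:SStarS2}.

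Finally, \emph{surjectivity} follows from (ii) and (iii): $\Phi(\cH_{tr})$ is a closed, $M_k$-reducing subspace of $\cH_{sp}$. Restricting \eqref{eq10} to pure incoming ($u=0$, $v_+=0$) and pure outgoing ($u=0$, $v_-=0$) vectors, combined with density of $H_2^\pm$ in $L_2(\Rr)$ and item~(5) of Lemma~\ref{lemma:adj}, produces a total set in $\cH_{sp}$; alternatively this is a consequence of the minimality clause in Proposition~\ref{prop1}(3). \emph{Uniqueness} of $\Phi$ is then immediate from the intertwining property together with Lemma~\ref{lem:uniquedet}. The step I expect to be hardest is the isometry, since it is where the whole defect-operator structure of the characteristic function $S$ has to cancel cleanly and where the precise boundary value interpretations of $S(k)$ and $\Gamma(A-k+i0)^{-1}$ need to be handled delicately.
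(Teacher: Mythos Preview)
Your intertwining computation (Step~(iii)) is correct and essentially coincides with the paper's Step~5 (the paper works with $(\cL_{tr}-\lambda)^{-1}$ rather than $\cL_{tr}$ itself, but the algebra is the same). The surjectivity argument via minimality in Proposition~\ref{prop1}(3) is also sound, once one observes that $\Phi$ applied to the channels $D_\pm$ followed by the span of $(k-\lambda)^{-1}$ for all $\lambda\notin\Rr$ generates $(L_2(\Rr,E),0)\cup(0,L_2(\Rr,E_*))$, which is total in $\cH_{sp}$ by Lemma~\ref{lem:uniquedet}. (Your first surjectivity attempt, however, is wrong as stated: $H_2^\pm$ are \emph{not} dense in $L_2(\Rr)$---they are orthogonal complements---so the images of pure channels alone are not total; their $\cH_{sp}$-orthogonal complement is exactly the image of $H$.)

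The genuine gap is in Steps~(i)--(ii). Your isometry identity
\[
2\norm{\Phi U}_{\cH_{sp}}^2=\norm{\mathcal F_+U}^2+\norm{\mathcal F_-U}^2+\int_\Rr\ll(I-S^*S)\tilde g,\tilde g\rr_E\,dk+\int_\Rr\ll(I-SS^*)g,g\rr_{E_*}\,dk
\]
contains two defect integrals that depend on $\tilde g$ and $g$ \emph{individually}, not on the combinations $\mathcal F_\pm U$. You never say what $\tilde g$ and $g$ are; the suggestion that for $u\in D(A)\cap D(A^*)$ one ``solves $(I-SS^*)g=h-S\tilde h$ directly'' is not explained, and in general $(I-SS^*)$ is not invertible. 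The claim that cross terms in $\norm{\mathcal F_\pm U}^2$ ``telescope'' against these defect integrals, with ``$\eps$-remainders recombining'', is not substantiated, and I do not see how to carry this out without explicit representatives for $(\tilde g,g)$. Finally, the closure argument (``then closing up in the norm of $\cH_{sp}$'') presupposes that $U\mapsto(\tilde g,g)$ is bounded, which is exactly the isometry you are trying to prove, so the logic is circular.

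This is precisely where the paper's strategy diverges from yours and supplies the missing idea. Rather than defining $\Phi$ implicitly through $\mathcal F_\pm$, the paper restricts to a dense set of \emph{test vectors} with middle component $(\Gamma(A-\overline{\lambda_0})^{-1})^*e$ or $(\Gamma_*(A^*-\overline{\mu_0})^{-1})^*e_*$ (dense by Theorem~\ref{thm:langer}), for which one can write down honest $L_2$-representatives
\[
\tilde g(k)=\hat v_+(k)+\frac{i}{\sqrt{2\pi}}\,\frac{e}{k-\lambda_0},\qquad
g(k)=\hat v_-(k)-\frac{i}{\sqrt{2\pi}}\,\frac{S(\lambda_0)e}{k-\lambda_0},
\]
and the analogous formula with $S^*(\overline{\mu_0})$. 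One then \emph{verifies} that these satisfy \eqref{eq10} using \eqref{eq:Sdiff} and \eqref{eq:SStarS1}--\eqref{eq:SStarS2}, and the isometry on such pairs reduces to Hardy-space orthogonalities and a single residue evaluation (Lemma~\ref{lem4.4}), with the key input being \eqref{eq:SStarS1}. What this buys is that both existence of $(\tilde g,g)$ and the isometry computation become finite, explicit calculations---no defect integrals to identify, no $\eps$-limits to control. Your route might be salvageable, but as written the isometry step lacks the crucial ingredient.
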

\begin{remark} A slightly more explicit formula for $\Phi$ is given in Lemma  \ref{lem4.2} below. \end{remark}
  \begin{proof}  {The long proof of the theorem is broken down into five steps.}
	
 \underline{\textbf{Step 1:}} Density of a set of test vectors.
   
  {We formulate the result as a  lemma.}
  \begin{Lemma} \label{lem4.1}
 Let $A$  be a maximally dissipative operator in $H$. Then
   the  linear set of test functions in the translational  version of the Hilbert space $\mathcal H_{tr}$
 $${\tau:=}\ \Span  \left \{
 \Span_{v_+\in L_2(\Rr_+,E),v_-\in L_2(\Rr_- ,E_*),e\in E,\lambda \in \mathbb{C}_+} \left  ( \begin{array}{c} v_-\\(\Gamma (A+\lambda)^{-1})^*e\\v_+
 \end{array}
 \right )  ,\right .
 $$
  $$ \left. \Span_{v_+\in L_2(\Rr_+,E),v_-\in L_2(\Rr_-,E_*),e_*\in E_*,\mu \in \mathbb {C}_-}
  \left   (\begin{array}{c} v_-\\(\Gamma_* (A^*+\mu)^{-1})^*e_*\\v_+\end{array} \right)
  \right \}
 $$
  is a dense set in
  $\mathcal H\ominus (0,H_{sa},0)  $  where $H_{sa} $  is the selfadjoint part of    the Langer   decomposition of $H$.
 In particular, if $A$ is a completely non-selfadjoint operator
 then the set $\tau$ presented above  is dense in the whole space $\mathcal H_{tr}$.
 \end{Lemma}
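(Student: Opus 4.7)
The plan is to observe that the span $\tau$ naturally decomposes, via linearity, into the direct sum of the full $L_2$-spaces on the outer channels with a subspace of $H$ in the middle, and then to identify that middle subspace with $H_{cns}$ via Theorem \ref{thm:langer}.

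First I would exploit the fact that the vectors $v_-$, $v_+$ range freely over $L_2(\R_-,E_*)$ and $L_2(\R_+,E)$. By choosing $e=0$ (or $e_*=0$) in either family of generators, the span $\tau$ contains every vector of the form $(v_-,0,v_+)$. Subtracting such a vector from a general element of the first (resp.\ second) family leaves the purely middle element $(0,(\Gamma(A+\lambda)^{-1})^*e,0)$ (resp.\ $(0,(\Gamma_*(A^*+\mu)^{-1})^*e_*,0)$). Consequently, by linearity,
\[
\tau \;=\; L_2(\R_-,E_*)\;\oplus\;M\;\oplus\;L_2(\R_+,E),
\]
where
\[
M \;=\; \Span\bigl\{(\Gamma(A+\lambda)^{-1})^*e : \lambda\in\C_+,\,e\in E\bigr\}\;+\;\Span\bigl\{(\Gamma_*(A^*+\mu)^{-1})^*e_* : \mu\in\C_-,\,e_*\in E_*\bigr\}.
\]
Taking closures in the Hilbert-space norm of $\cH_{tr}$ (which is the orthogonal sum of the norms on the three summands) gives $\overline{\tau}=L_2(\R_-,E_*)\oplus\overline{M}\oplus L_2(\R_+,E)$.

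The final step is the identification $\overline{M}=H_{cns}$, which is precisely the content of the Langer decomposition established in Theorem \ref{thm:langer}, formula \eqref{eq:cns}. With this in hand,
\[
\overline{\tau} \;=\; L_2(\R_-,E_*)\oplus H_{cns}\oplus L_2(\R_+,E) \;=\; \cH_{tr}\ominus(0,H_{sa},0),
\]
which is the required density. In the completely non-selfadjoint case $H_{sa}=\{0\}$, so $\overline{\tau}=\cH_{tr}$.

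I do not expect a genuine obstacle here: the bulk of the argument has already been done in the proof of Theorem \ref{thm:langer}, and the only real observation needed is the decoupling of the three components of $\cH_{tr}$ within $\tau$. The one point deserving a moment's care is the legitimacy of treating the $L_2$-components as fully independent of the middle generator; this is immediate since, for any fixed choice of $e$ and $\lambda$, taking the difference of two elements of the first family with the same middle entry but different outer entries yields an arbitrary $(v_-,0,v_+)$ inside $\tau$.
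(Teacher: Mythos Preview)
Your proposal is correct and follows essentially the same route as the paper: the paper argues via the orthogonal complement (a vector $(f_-,u,f_+)\perp\tau$ is shown, by choosing $e=0$, $e_*=0$ or $v_\pm=0$, to satisfy $f_\pm=0$ and $\Gamma(A+\lambda)^{-1}u=\Gamma_*(A^*+\mu)^{-1}u=0$, hence $u\in H_{sa}$ by Theorem \ref{thm:langer}), which is the dual formulation of your direct decomposition $\overline{\tau}=L_2(\R_-,E_*)\oplus\overline{M}\oplus L_2(\R_+,E)$ with $\overline{M}=H_{cns}$. Both rely on the same two observations: decoupling of the outer channels via the free choice of $v_\pm$, and the identification of the middle span with $H_{cns}$ from Theorem \ref{thm:langer}.
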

  \begin{proof}
 Consider a vector $\left ( \begin{array}{c}  f_-\\u\\f_+\end{array} \right ) \in \mathcal H_{tr} $ orthogonal to {$\tau$}.  Then, choosing $e=0$  or $e_*=0$ we get immediately that
  $f_-=0$ and $f_+=0$.  On the other hand, putting $v_-=0$ and $v_+=0$ we obtain from the orthogonality condition that
  \begin{enumerate}
  \item
  $\Gamma(A+\lambda)^{-1}u=0$ for all  $\lambda\in  \mathbb C_+$,
  \item
  $\Gamma_*(A^*+\mu)^{-1}u=0$ for all $\mu\in  \mathbb C_-$.
 \end{enumerate}
 By Theorem {\ref{thm:langer}}, conditions
 (1) and (2) together imply  that $u\in H_{sa}$.
 \end{proof}
 
 \underline{\textbf{Step 2:}} Embedding of the test vectors into $\mathcal H_{sp}$.

 {Let us first define the transformation $\Phi$ acting on the test vectors from Lemma \ref{lem4.1}  by the explicit formula  in the following lemma:}
\begin{Lemma}\label{lem4.2}
{Let $\lambda_0\in \mathbb{C}_+$, $e \in E$, $v_-\in L_2(\Rr_-, E_*)$ and $v_+ \in L_2 (\Rr_+,E)$.
The map  
$$
\Phi:\left (\begin{array}{c} v_-\\ (\Gamma (A-\overline \lambda_0)^{-1} )^*e\\v_+\end{array}\right )\to {\left  ( \begin{array}{l} \tilde g(k)\\ g(k)
 \end{array} \right )}:=\left ( \begin{array}{c} \hat v_+(k)+\dfrac i{\sqrt{ 2 \pi}} \dfrac e{k-\lambda_0}\\    \hat v_-(k)-\dfrac i{\sqrt{ 2 \pi}} \dfrac{  S(\lambda_0)e}{k-\lambda_0}\end{array}
\right )
$$
satisfies condition (\ref{eq10}).   }
\end{Lemma}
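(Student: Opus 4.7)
The proof will be a direct verification that the two identities in \eqref{eq10} hold for the explicit choice of $\left(\tilde g, g\right)$ given in the statement. The plan is to substitute the formulas for $\tilde g(k)$ and $g(k)$ into the left-hand sides of \eqref{eq10}, compare with the definitions of $\mathcal{F}_\pm(v_-, u, v_+)$, observe that the $\hat v_\pm$ terms appear identically on both sides and therefore cancel, and finally reduce the remaining equalities to the identities already recorded in Lemma \ref{lemma:adj}.

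For the first identity $\tilde g(k) + S^*(k) g(k) = \mathcal{F}_+(v_-, u, v_+)(k)$, after the $\hat v_\pm$ cancellations it remains to prove
\begin{equation*}
\frac{i}{\sqrt{2\pi}} \cdot \frac{1}{k - \lambda_0}\bigl(I_E - S^*(k) S(\lambda_0)\bigr) e
 = -\frac{1}{\sqrt{2\pi}}\,\Gamma(A - k + i0)^{-1}\bigl(\Gamma(A - \overline{\lambda_0})^{-1}\bigr)^* e.
\end{equation*}
This is (up to the factor $i$) exactly formula \eqref{eq:SStarS1} of Lemma \ref{lemma:adj}(6), applied with $w = k + i0 \in \C_+$ and $z = \lambda_0 \in \C_+$, for which $\bar w - z = k - \lambda_0$ and $A - \bar w$ becomes the boundary value $A - k + i0$.

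For the second identity $S(k) \tilde g(k) + g(k) = \mathcal{F}_-(v_-, u, v_+)(k)$, the analogous cancellation reduces the problem to
\begin{equation*}
\frac{i}{\sqrt{2\pi}} \cdot \frac{S(k) - S(\lambda_0)}{k - \lambda_0} e = -\frac{1}{\sqrt{2\pi}} \Gamma_*(A^* - k - i0)^{-1}\bigl(\Gamma(A-\overline{\lambda_0})^{-1}\bigr)^* e,
\end{equation*}
which is precisely \eqref{eq:Sdiff} of Lemma \ref{lemma:adj}(1) with $\mu = k + i0$ and $\mut = \lambda_0$ (both in $\rho(A^*) \supset \C_+$), divided by $k - \lambda_0$ and multiplied by $i$.

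The one nontrivial point is that Lemma \ref{lemma:adj} states these identities for $w, z$ or $\mu, \mut$ in the open half-planes, whereas we need them with one argument taken to the real line. This step should be justified by invoking the a.e.\ existence of the non-tangential boundary values of $S(\cdot)$, $S^*(\cdot)$, $\Gamma(A-\cdot)^{-1}u$ and $\Gamma_*(A^*-\cdot)^{-1}u$ along $\R$: for vectors in the range of $(\Gamma(A-\overline{\lambda_0})^{-1})^*$ the appropriate Hardy-class membership is already delivered by Theorem \ref{thm:2.1}, while $S$ and $S^*$ have strong non-tangential limits by the classical Sz.-Nagy theorem cited after the definition of $\mathcal{F}_\pm$. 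I expect this passage to the boundary to be the only delicate step; the algebraic cancellations themselves are routine once the identities of Lemma \ref{lemma:adj} are in hand.
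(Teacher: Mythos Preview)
Your proposal is correct and follows essentially the same approach as the paper's own proof: substitute the explicit $\tilde g,g$ into the two identities of \eqref{eq10}, cancel the $\hat v_\pm$ terms, and reduce the remaining equalities to \eqref{eq:SStarS1} and \eqref{eq:Sdiff} of Lemma~\ref{lemma:adj}, passing to the boundary $\varepsilon\to 0$ via Theorem~\ref{thm:2.1} and the Sz.-Nagy theorem on strong non-tangential limits. The paper treats $\mathcal F_-$ first and $\mathcal F_+$ second while you do the reverse, but the argument is otherwise identical.
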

\begin{proof}
We have
\begin{eqnarray*}
S(k)\tilde g(k) + g(k) &=& S(k)\left( \hat v_+(k) + \dfrac i {\sqrt{2 \pi}} \dfrac e{k-\lambda_0}\right) + \hat v_- (k)-\dfrac i {\sqrt{ 2 \pi} }\dfrac { S(\lambda_0)e}{k-\lambda_0} \\
& =&S(k) \hat v_+(k) + \dfrac i { \sqrt {2 \pi}}\dfrac { (S(k)-S(\lambda_0))e}{  k-\lambda_0} + \hat v_-(k),
\end{eqnarray*}
and
\begin{eqnarray*}
 \mathcal F_-  \left(
\begin{array}{c}
v_-\\ (\Gamma (A-\overline \lambda _0)^{-1})^* e\\v_+
\end{array}
\right )  
&=&-\dfrac1{ {\sqrt 2 \pi}}\left ( \Gamma_*  { (A^*-k- i 0)}^{-1} \right )\left (\Gamma (A-\overline\lambda_0)^{-1}\right )^* e+S(k)\hat v_+(k)+\hat v_-(k).
\end{eqnarray*}
{To prove that the two are equal,} we need to show
\be\label{eq:11} \dfrac{  S(k)-S(\lambda_0)}{k-\lambda_0} e=i(\Gamma_*{(A^*-k-i0)}^{-1}) (\Gamma (A-\overline \lambda_0)^{-1})^*e\ee
for all $e \in E$ and a.e.$ k \in  {\mathbb R} $.
In order to do this, we use {\eqref{eq:Sdiff}
and set} $\tilde
\mu=\lambda_0 \in \mathbb{C}_+, \mu = k+i\eps, \eps >0.$
{Letting $\eps\to 0$  we have that for fixed $e\in E$ and a.e. $k\in \mathbb R$ the equality \eqref{eq:11}  is valid.}  We remind the reader that, { applied to any $ e \in E$  the right hand side term in \eqref{eq:Sdiff} lies in the vector-valued Hardy class as a   function of  $\mu$ by Theorem \ref{thm:2.1}}.  
Similarly,
$$\mathcal F_+  \left (   \begin{array}{c}  v_-\\  (\Gamma (A-\overline \lambda_0)^{-1})^*e \\v_+ \end{array}  \right )=
-\dfrac1{ { \sqrt 2\pi}}  ( \Gamma (A-k+i0)^{-1})  (\Gamma (A-\overline \lambda_0)^{-1})^*e+ S^*(k)
\hat v_- (k) + \hat v_+(k).
$$
In order to prove that {this is equal to $\tilde g(k)+ S^* g(k)$,} we have    to show
$$
\dfrac{  I_E-S^*(z) S(\lambda_0)}{\overline z-\lambda_0}=i (\Gamma (A-\overline z)^{-1})  (\Gamma{ (A-\overline \lambda _0)^{-1})^*}
$$
for all $\lambda_0, z \in \mathbb{C}_+$, which is exactly \eqref{eq:SStarS1}.
As   $\eps  \to 0$    we see that $z= k+i \eps \to k$ .  Thus  both of the terms above converge     in the strong topology  for  a.e.~$k \in \Rr$.   This  completes  the proof.
\end{proof}
\begin{remark}\label{unlabeledp41}
{From \cite[Lemma 5.4]{BMNW20}, it is easily seen that}
the test vectors of first type from  Lemma \ref{lem4.1}  belong  to $\mathcal {D(L}_{tr})$  provided that $v_+\in H^1 (\Rr_+,E)$, $v_-\in H^1 (\Rr_-, E_*)$ with $v_+(0)=-e$ and $v_-(0)={-S( \lambda_0)}e$,  {in particular,} $ v_-(0) = {S( \lambda_0)}v_+(0)$.

Concerning  the second type of test vectors  $(v_-, ( \Gamma_*(A^*-\overline \mu_0)^{-1})^*e_*, v_+)\in \mathcal H_{tr}$
with $e_*\in E_*, \mu_0\in \mathbb{C}_-$ { from Lemma \ref{lem4.1}, we have that they lie in} $\mathcal {D(L}_{tr})$  provided $v_-(0)= -e_*$ and $v_+(0)={S^*(\overline \mu_0)}v_-(0)$.  
\end{remark}
\begin{Lemma}  \label{lem4.3}
Define the map
$$
\Phi:  \left ( \begin{array}{c} v_-\\ {(\Gamma_*  (A^*-\overline  \mu_0)^{-1})^*} e_*\\ v_+ \end{array} \right )  \to {\left  ( \begin{array}{l} \tilde g(k)\\ g(k)
 \end{array} \right )}
 =\left  ( \begin{array}{l}
\hat v_+(k)+\dfrac {i}  {\sqrt{2 \pi}}  \dfrac { S^* (\overline \mu_0)e_*}{k-\mu_0}  \\ { \hat v_-(k)-\dfrac {i}  {\sqrt{2 \pi}}  \dfrac { e_*  }{k-\mu_0}}
\end{array} \right ) $$
for any $\mu_0\in \mathbb{C}_-, e_*\in E_*, v_+\in L_2 (\Rr_+, E), v_-\in L_2 (\Rr_-E_*)$.  {This map satisfies the condition {\eqref{eq10}}.}
\end{Lemma}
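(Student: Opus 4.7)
The plan is to verify both equalities in \eqref{eq10} by direct substitution of the proposed $\tilde g$ and $g$, noting that the Fourier-transform terms $\hat v_\pm$ reproduce the corresponding terms of $\mathcal F_\pm(v_-,(\Gamma_*(A^*-\overline{\mu_0})^{-1})^*e_*,v_+)$ identically. After cancelling these, matching \eqref{eq10} reduces, for almost every $k\in\R$ and every $e_*\in E_*$, to the two operator identities
\begin{equation*}
\Gamma(A-k+i0)^{-1}\bigl(\Gamma_*(A^*-\overline{\mu_0})^{-1}\bigr)^* e_* \;=\; i\,\frac{S^*(k)-S^*(\overline{\mu_0})}{k-\mu_0}\,e_*
\end{equation*}
and
\begin{equation*}
\Gamma_*(A^*-k-i0)^{-1}\bigl(\Gamma_*(A^*-\overline{\mu_0})^{-1}\bigr)^* e_* \;=\; i\,\frac{I_{E_*}-S(k)S^*(\overline{\mu_0})}{k-\mu_0}\,e_*.
\end{equation*}

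For the first identity I would take the Hilbert-space adjoint of \eqref{eq:Sdiff} with $\mu=\overline{\mu_0}\in\C_+$ and $\tilde\mu=k+i\eps$ for $\eps>0$, which yields
\begin{equation*}
S^*(\overline{\mu_0})-S^*(k+i\eps)=-i(\mu_0-k+i\eps)\bigl(\Gamma(A-k+i\eps)^{-1}\bigr)\bigl(\Gamma_*(A^*-\overline{\mu_0})^{-1}\bigr)^*,
\end{equation*}
and then pass to the limit $\eps\downarrow 0$, using Theorem \ref{thm:2.1} and the Sz.-Nagy theorem on non-tangential boundary values exactly as in the proof of Lemma \ref{lem4.2}. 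A routine rearrangement then produces the first identity. For the second identity I would apply \eqref{eq:SStarS2} directly with $z=\mu_0\in\C_-$ and $w=k-i\eps\in\C_-$, pass to the limit $\eps\downarrow 0$, and then invoke the reciprocity $S(z)=S_*^*(\overline z)$ from part (2) of Lemma \ref{lemma:adj} (so that $S_*(\mu_0)=S^*(\overline{\mu_0})$ and $S_*^*(k)=S(k)$ for a.e.~$k\in\R$) to rewrite $I_{E_*}-S_*^*(k)S_*(\mu_0)$ as $I_{E_*}-S(k)S^*(\overline{\mu_0})$.

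The proof is dual to that of Lemma \ref{lem4.2}, with the roles of $\Gamma\leftrightarrow\Gamma_*$, $E\leftrightarrow E_*$ and $\C_+\leftrightarrow\C_-$ systematically interchanged: the adjoint of \eqref{eq:Sdiff} replaces \eqref{eq:Sdiff} itself, and \eqref{eq:SStarS2} replaces \eqref{eq:SStarS1}. I do not anticipate any substantive obstacle. The only mildly delicate points are the justification of the boundary limits $\eps\downarrow 0$ in the strong operator topology, which is standard given the Hardy-class inclusions in Theorem \ref{thm:2.1} applied to $(\Gamma_*(A^*-\overline{\mu_0})^{-1})^* e_* \in H$, and careful bookkeeping of the complex conjugates and signs generated when taking the adjoint in \eqref{eq:Sdiff}.
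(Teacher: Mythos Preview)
Your proposal is correct and follows essentially the same route as the paper's own proof: the paper states that the argument is completely analogous to Lemma \ref{lem4.2} and records precisely the two identities you isolate, obtained from \eqref{eq:Sdiff} (in the form you reach by taking adjoints) and from \eqref{eq:SStarS2} combined with $S(z)=S_*^*(\overline z)$ from Lemma \ref{lemma:adj}. Your handling of the boundary limits $\eps\downarrow 0$ via Theorem \ref{thm:2.1} also matches the paper's justification in Lemma \ref{lem4.2}.
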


\begin{proof}
The statement of the lemma  and its proof are both completely analogous to those of Lemma \ref{lem4.2}. {Therefore we omit the details and simply note that
 in the proof  we use the following identities}
$$\dfrac{  S(\overline \mu_0) -S(\overline z)}{  \overline z-\overline \mu_0}=-i(\Gamma_*(A^*-\overline \mu_0)^{-1}) (\Gamma (A-z)^{-1})^*
$$  with $z, \mu_0\in \mathbb C_-$ {which is obtained from \eqref{eq:Sdiff},}  and
$$\dfrac{S(z)  S^*(\overline \mu_0) -I_{E_*}}{    z- \mu_0}=i(\Gamma_*(A^*-z)^{-1}) (\Gamma_* (A^*-\overline \mu_0)^{-1})^*
$$
with   $z \in \mathbb{C}_+$, $\mu_0\in \mathbb{C}_-$, {which is obtained from \eqref{eq:SStarS2} and noting $S(z)=S^*_*(\overline{z})$ from Lemma \ref{lemma:adj}}. 
\end{proof}

{In order to avoid unnecessary notation we have used  the symbol   $\Phi$ to denote both the maps from Lemmas \ref{lem4.2} and \ref{lem4.3}. } 
{The construction of the map   $\Phi$ allows us to extend it by linearity to any finite linear combination of test functions  of both  of the above  types. We will see  in the next lemma that this procedure does not lead to any  contradiction. Moreover, it  justifies  the  use of the same symbol $\Phi$ for both maps.}   In view of this, the extended embedding maps of the test vectors into the spectral  version's  Hilbert space $\mathcal H_{sp}$  have the form
\begin{equation}
\Phi \left ( \begin{array}{c} v_-\\\sum_j (\Gamma (A-\overline \lambda_j)^{-1})^*e_j\\v_+\end{array}\right )
\to
 \left ( \begin{array}{c}  \hat v_+(k) + \dfrac i {\sqrt{ 2 \pi }} \sum_j \dfrac {e_j}{k-\lambda_j} \\ \hat v_-(k) - \dfrac i {\sqrt{ 2 \pi }} \sum_j \dfrac {S(\lambda_j) e_j}{k-\lambda_j}\end{array}\right )  \label{em1} ,
\end{equation}
$v_-\in L_2 (\Rr_-, E_*)$, $v_+\in L_2 (\Rr_+,E)$, $\lambda_j\in \mathbb{C}_+$, $e_j\in E$, $j=1,2...N$
and
\begin{equation}
\Phi \left ( \begin{array}{c} v_-\\\sum_j (\Gamma_* (A^*-\overline \mu_j)^{-1})^*e_{*j}\\v_+\end{array}\right )
\to
 \left ( \begin{array}{c}  \hat v_+(k) + \dfrac i {\sqrt{ 2 \pi }} \sum_j \dfrac {S^*(\overline \mu_j)e_{*j}}{k-\mu_j} \\ \hat v_-(k) - \dfrac i {\sqrt{ 2 \pi }} \sum_j \dfrac {e_{*j}}{k-\mu_j}
 \end{array}\right )  \label{em2}
\end{equation}
$v_-\in L_2(\Rr_-, E_*)$, $v_+\in L_2(\Rr_+,E)$, $\mu_j\in \mathbb{C}_-$, $e_{*j}\in E_*$, $j=1,2,...N$.

\underline{\textbf{Step 3:}} {Isometry property of the embedding maps.}

\begin{Lemma}  \label{lem4.4}
The embedding maps {given by \eqref{em1} and \eqref{em2} are isometries. By linearity, they   generate  the isometry map $\Phi$ from a dense subset of $\mathcal H_{tr}\ominus (0,H_{sa},0)$ 
to $\mathcal H _{sp}$.}
\end{Lemma}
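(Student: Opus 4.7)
The plan is to verify the isometry by polarization: it suffices to establish
$$\langle \Phi(U_1), \Phi(U_2)\rangle_{\mathcal{H}_{sp}} \;=\; \langle U_1, U_2\rangle_{\mathcal{H}_{tr}}$$
for all $U_1, U_2$ drawn from the spanning set of Lemma \ref{lem4.1} in either of the explicit forms prescribed by Lemmas \ref{lem4.2} and \ref{lem4.3}. Since both sides are sesquilinear, verification on generators extends by linearity to all of $\tau$; the fact that $\Phi$ is a well-defined map on the span is a corollary, because two candidate images arising from different representations (as Type~1 versus Type~2 sums) would differ by a vector of zero $\mathcal{H}_{sp}$-norm and hence represent the same class. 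Expanding the matrix inner product one obtains
$$\langle \Phi(U_1), \Phi(U_2)\rangle_{\mathcal{H}_{sp}} \;=\; \int_{\mathbb{R}} \bigl(\langle \tilde g_1, \tilde g_2\rangle_E + \langle S^*g_1, \tilde g_2\rangle_E + \langle S\tilde g_1, g_2\rangle_{E_*} + \langle g_1, g_2\rangle_{E_*}\bigr)\,dk,$$
and substituting the formulas for $\tilde g_j, g_j$ decomposes the result into Parseval contributions from the $\hat v_\pm$ parts, mixed cross terms between $\hat v_\pm$ and rational factors, and purely rational contributions from the $1/(k-\lambda_j)$ or $1/(k-\mu_j)$ poles.

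Four ingredients drive the calculation. First, Parseval's identity yields $\int \langle \hat v_+^1, \hat v_+^2\rangle\,dk = \langle v_+^1, v_+^2\rangle_{L_2(\R_+,E)}$ and the analogous identity on $\mathbb{R}_-$. Second, the Hardy decomposition $L_2(\R) = H_2^+ \oplus H_2^-$, together with the observations that $S(k)\hat v_+(k)$ extends analytically to $\C_+$ (hence lies in $H_2^+(E_*)$) and $S^*(k)\hat v_-(k) = S_*(k)\hat v_-(k)$ extends analytically to $\C_-$ via Lemma \ref{lemma:adj}(2) (hence lies in $H_2^-(E)$), annihilates the mixed cross terms. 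Third, Cauchy's residue theorem evaluates the remaining rational integrals: for $\lambda_1\in\C_+$, $\bar\lambda_2\in\C_-$ one has $\int dk /[(k-\lambda_1)(k-\bar\lambda_2)] = 2\pi i/(\lambda_1 - \bar\lambda_2)$, and similarly against Hardy-class scalars via the Szeg\H{o} kernel. Fourth, the characteristic function identities \eqref{eq:SStarS1}, \eqref{eq:SStarS2}, and \eqref{eq:Sdiff} convert operator combinations of the form $(I_E - S^*(\lambda_1)S(\lambda_2))/(\bar\lambda_1 - \lambda_2)$ into $i\,\Gamma(A-\bar\lambda_1)^{-1}(\Gamma(A-\bar\lambda_2)^{-1})^*$ and the analogous $\Gamma_*$ versions. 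For two Type~1 vectors the rational-rational contribution collapses, via ingredients (iii) and (iv) applied to \eqref{eq:SStarS1}, to exactly $\langle \Gamma(A-\bar\lambda_2)^{-1}(\Gamma(A-\bar\lambda_1)^{-1})^* e_1, e_2\rangle_E = \langle h_1, h_2\rangle_H$, matching the $\mathcal{H}_{tr}$ inner product; the Type~2--Type~2 case is symmetric and uses \eqref{eq:SStarS2}.

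The main obstacle is the mixed Type~1--Type~2 inner product, where the weight entries $S^*(k), S(k)$ interact nontrivially with analytic continuations from \emph{both} half-planes simultaneously. Here the residue calculation produces expressions involving $S(\lambda_0)$, $S^*(\bar\mu_0)$, and cross-evaluations of $S$ at conjugate points, which must reconstitute the Green-type expression $\Gamma_*(A^*-\bar\mu_0)^{-1}(\Gamma(A-\bar\lambda_0)^{-1})^*$ that appears on the $\mathcal{H}_{tr}$ side. The reconciliation is achieved using \eqref{eq:Sdiff} and the adjoint relation $S(z) = S_*^*(\bar z)$ of Lemma \ref{lemma:adj}(2); this simultaneously verifies the two definitions of $\Phi$ in Lemmas \ref{lem4.2} and \ref{lem4.3} are mutually consistent on any vector expressible in both forms. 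Once this identity is in place, sesquilinearity extends $\Phi$ unambiguously to all of $\tau$, which by Lemma \ref{lem4.1} is dense in $\mathcal{H}_{tr}\ominus(0,H_{sa},0)$; the verified isometry then extends by continuity to this whole dense subspace, completing the proof.
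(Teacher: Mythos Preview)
Your proposal is correct and follows essentially the same approach as the paper: both verify the isometry on pairs of test vectors by expanding the weighted $\mathcal{H}_{sp}$ inner product, exploiting the Hardy-class orthogonality $H_2^+\perp H_2^-$ (via Paley--Wiener and the analyticity of $S$, $S_*$) to kill cross terms, evaluating the surviving rational integrals by residues, and then invoking \eqref{eq:SStarS1}, \eqref{eq:SStarS2}, and \eqref{eq:Sdiff} to match the result with the $\mathcal{H}_{tr}$ inner product. The paper carries out the Type~1--Type~1 computation in full detail (including the key decomposition $\frac{S(k)e_j}{k-\lambda_j}=\frac{(S(k)-S(\lambda_j))e_j}{k-\lambda_j}+\frac{S(\lambda_j)e_j}{k-\lambda_j}$ into $H_2^\pm$ parts), declares Type~2--Type~2 analogous, and handles the mixed case exactly as you indicate; it defers the well-definedness conclusion to a separate Lemma~\ref{lem3.12}, whereas you correctly observe that it already follows from the isometry identity itself.
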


\begin{proof}
{From Lemma \ref{lem4.1}, we know that the linear set generated by test vectors of both types is dense in $\mathcal H_{tr}\ominus (0,H_{sa},0)$.
Therefore it remains to show the isometry property and that \eqref{em1} and \eqref{em2} do not lead to a contradiction.}

Consider two test vectors {$\left ( \begin{array}{c} v_-\\ \sum_{j=1}^N (\Gamma (A-\overline \lambda_j)^{-1})^*e_j\\ v_+\end{array}\right )$ and $\left ( \begin{array}{c} w_-\\ \sum_{m=1}^M (\Gamma (A-\overline \mu_m)^{-1})^*f_j\\ w_+\end{array}\right )$
as on the left hand side of (\ref{em1}).
In particular, we assume here that  $\lambda_j \in \mathbb{C}_+$, $j=1,...,N$   and $\mu_m \in \mathbb{C}_+$, $m=1,2,...,M$  and that the vectors $e_j, f_m \in E$ for all values of $j$ and $m$. Let 
$\left ( \begin{array}{l} \tilde  g \\ g
\end{array} \right )
$
and   $\left ( \begin{array}{l} {\tilde{f}} \\ {f} \end{array} \right ) $   denote  
their images under $\Phi$, respectively. }

Then
\begin{eqnarray}
&&\left  \langle  \left ( \begin{array}{l} \tilde  g \\ g \end{array} \right ), \left ( \begin{array}{l} {\tilde{f}} \\ {f} \end{array} \right )\right  \rangle _{\mathcal H_{sp}}=
\left \langle  \left (  \begin{array}{cc}I_E & S^*\\ S& I_{E_*} \end{array} \right ) \left ( \begin{array}{l} \tilde  g \\ g \end{array} \right ),\left  ( \begin{array}{l} {\tilde{f}} \\ {f} \end{array} \right ) \right \rangle_{L_2(\Rr,E\oplus E_*)} \nonumber \\
&&   =\llangle \hat v_+(k) + \dfrac i { \sqrt{ 2 \pi}} \sum_j \dfrac {e_j}{k-\lambda_j} + S^*(k)\hat v_-(k)- \dfrac i { \sqrt{ 2 \pi}} \sum_j \dfrac { S^*(k) S(\lambda_j) e_j}{k-\lambda_j}, { \hat w_+(k) + \dfrac i { \sqrt{2 \pi}} \sum_m \dfrac{f_m}{k-\mu_m} }\rrangle _{L_2 (\Rr,E)}\nonumber  \\
&&
+  \llangle S(k) \hat v_+(k)+\dfrac i { \sqrt{2 \pi}} \sum_j \dfrac{S(k)e_j}{k-\lambda_j}+ \hat v_-(k) - \dfrac i { \sqrt{2 \pi}} \sum_j \dfrac{S(\lambda_j)e_j}{k-\lambda _j}, {\hat w_-(k) - \dfrac i { \sqrt{2 \pi}} \sum_m\dfrac{S(\mu_m)f_m}{k-\mu_m}}\rrangle_{L_2 (\Rr, E_*)}.   \label{e15}
\end{eqnarray}
{This follows   since images of  ${\Phi}$ consist of  images of finite   linear combinations of test vectors of the first type   belonging  to  $L_2 (\Rr,E)\oplus L_2 (\Rr, E_*)\subset \mathcal H_{sp}$.}

To continue the explicit calculation of the last expression  (\ref{e15})
  notice that by the Paley-Wiener Theorem \cite{Koosis},
$$  \hat v_+{(k)}=\dfrac 1 { \sqrt{ 2 \pi}} \int_{\Rr_+}  e^{ik\xi} v_+(\xi) d \xi \in H_2^+ (E),$$
$$\hat v_-{(k)}=\dfrac 1 { \sqrt{ 2 \pi}} \int_{\Rr_-}  e^{ik\xi} v_-(\xi) d \xi \in H_2^- (E_*),$$
and
\begin{equation}
\dfrac{ S(k) e_j}{k-\lambda_j}=\dfrac{  (S(k)-S(\lambda_j))e_j}{k-\lambda_j}  +
\dfrac{ S(\lambda_j)e_j}{k-\lambda_j}  \label{eq:pmdecomp}
\end{equation}
gives the orthogonal decomposition of the vector  from $L_2 (\Rr,E_*)$ into the sum of two vector functions from $H^+_2 (E_*)$ and $H^-_2(E_*)$,    respectively.

Then, {omitting the index on the scalar  product  
in the Hilbert spaces $L_2 (\Rr, E)$  and $L_2(\Rr,E_*)$ 
 and using  its  linearity properties, 
 we get that} 
{\begin{eqnarray}\nonumber
\left  \langle  \left ( \begin{array}{l} \tilde  g \\ g \end{array} \right ), \left ( \begin{array}{l} {\tilde{f}} \\ {f} \end{array} \right )\right  \rangle _{\mathcal H_{sp}}&=&   \langle \hat v_+, \hat w_+ \rangle  + \langle \hat v_-, \hat w_- \rangle  + \dfrac 1{2 \pi} \left  \langle \sum_j \dfrac{e_j}{ k-\lambda_j},    \sum_m \dfrac{f_m}{ k-\mu_m}\right  \rangle-\dfrac 1{2 \pi} \left  \langle \sum_j \dfrac{S^*S(\lambda_j) e_j}{ k-\lambda_j},   \sum_m \dfrac{f_m}{ k-\mu_m} \right \rangle\\
&&+ \left \langle S^*(k) \hat v_-,\dfrac{ i}{  \sqrt{ 2\pi}} \sum_m \dfrac{ f_m}{k-\mu_m}\right  \rangle + \dfrac{ i}{  \sqrt{ 2\pi}} \left \langle  \hat v_-,\dfrac{ S(\mu_m) f_m}{k-\mu_m}\right \rangle.\label{eq:gf}
\end{eqnarray}}
Here we have used that
\begin{enumerate}
\item
$\hat v_+\in H^+_2 (E) \perp \sum_m \dfrac{f_m}{k-\mu_m}\in H^-_2(E)$,
\item
$\sum_j     \dfrac{e_j}{k-\lambda_j}\in H^-_2(E)\perp
\hat w_+ \in  H^+_2 (E) $,
\item
$S^*(k)\hat v_-\in H^-_2(E) \perp  \hat w_+\in  H^+_2(E) $,
\item
$\sum_j     \dfrac{S^*S(\lambda_j)e_j}{k-\lambda _j}\in H^-_2(E)\perp
\hat w_+ \in  H^+_2 (E), $
\item
$  S(k) \hat v_+\in H^+_2(E_*) \perp  \left(  \hat w_-  -\dfrac{ i}{  \sqrt{ 2\pi}} \sum_m \dfrac{ S(\mu_m ) f_m}{k-\mu_m}\right) \in H^-_2(E_*),
$
\item
$  \sum_j  \dfrac{S(k) e_j}{k-\lambda_j}-  \sum_j  \dfrac{S(\lambda_j) e_j}{k-\lambda_j}\in H^+_2(E_*)\perp  \left(  \hat w_-  -\dfrac{ i}{  \sqrt{ 2\pi}} \sum_m \dfrac{ S(\mu_m ) f_m}{k-\mu_m} \right) \in H^-_2(E_*)$; this is
 due to the decomposition (\ref{eq:pmdecomp}).
\end{enumerate}
The last two terms in \eqref{eq:gf} cancel because
\begin{eqnarray*}
  \left \langle  S^*(k)\hat v_-, \dfrac{ i}{  \sqrt{ 2\pi}} \sum_m \dfrac{  f_m}{k-\mu_m}  
\right \rangle  &=&   -\dfrac{ i}{  \sqrt{ 2\pi}}
\left \langle    \hat v_-,  \sum_m \dfrac{ S(k) f_m}{k-\mu_m} \right \rangle\\  
&=&
-\dfrac{ i}{  \sqrt{ 2\pi}}
\left \langle    \hat v_-,  \sum_m \dfrac{ S (\mu _m)f_m}{k-\mu_m} \right \rangle 
-\dfrac{ i}{  \sqrt{ 2\pi}}
\left \langle    \hat v_-,  \sum_m \dfrac{ ( S(k) -S (\mu_m))f_m}{k-\mu_m} \right \rangle\\ 
&=& -\dfrac{ i}{  \sqrt{ 2\pi}}
\left \langle    \hat v_-,  \sum_m \dfrac{ S(\mu_m)  f_m}{k-\mu_m} \right \rangle
\end{eqnarray*}
by the  orthogonality of $H^-_2(E_*)$ and $H^+_2(E_*)$.

{We have
\begin{eqnarray*}
\left \langle  
\dfrac{S^*(k)S(\lambda_j))e_j}{k-\lambda_j},  \dfrac{f_m}{k-\mu_m}
\right \rangle  
& =&\left \langle  
\dfrac{S(\lambda_j) e_j}{k-\lambda_j}, \dfrac{S(k) f_m}{\lambda-\mu_m} \right  \rangle \\
& = &\left \langle \dfrac{ S(\lambda_j) e_j}{k-\lambda_j}, \dfrac{ ( S(k) - S(\mu_m)) f_m}{k-\mu_m}  \right \rangle  
+\left \langle  
\dfrac{S(\lambda_j) e_j}{k-\lambda_j}, \dfrac{S(\mu_m) f_m}{k-\mu_m} \right  \rangle \\
&=&
\left \langle\dfrac{S(\lambda_j) e_j}{k-\lambda_j}, \dfrac{ S(\mu_m) f_m}{k-\mu_m} \right  \rangle\ =\
\left \langle\dfrac{S^*(\mu_m)S(\lambda_j)e_j}{k-\lambda_j}, \dfrac{ f_m}{k-\mu_m} \right  \rangle
\end{eqnarray*}
by \eqref{eq:pmdecomp} and the orthogonality of $H^+_2(E_*)$  and   $H^-_2(E_*).$}
 
{Therefore,  applying the Parseval identity, \eqref{eq:gf} becomes
\begin{eqnarray*}
\left  \langle  \left ( \begin{array}{l} \tilde  g \\ g \end{array} \right ), \left ( \begin{array}{l} {\tilde{f}} \\ {f} \end{array} \right )\right  \rangle _{\mathcal H_{sp}}&=&   \langle v_+, w_+\rangle + \langle v_-, w_-\rangle
+ \dfrac1{2 \pi}  \left \langle   \sum_j
\dfrac{(I_E-S^*(k)S(\lambda_j))e_j}{k-\lambda_j} , \sum_m  \dfrac{f_m}{k-\mu_m}\right \rangle\\
&=&
\langle v_+, w_+ \rangle + \langle v_-, w_- \rangle  + \dfrac1{2 \pi}  \sum_{j ,m} \left \langle \dfrac{(I_E-S^*(\mu_m)S(\lambda_j))e_j}{k-\lambda_j} ,  \dfrac{f_m}{k-\mu_m}
\right \rangle.
\end{eqnarray*}}

An explicit calculation of the residues yields
$$\dfrac1{2\pi i} \left \langle  \dfrac1{k-\lambda_j} ,\dfrac1 {k-\mu_m} \right  \rangle_{L_2(\R)}
=\dfrac1{2 \pi i} \int_\Rr  \dfrac {dk}{  (k-\lambda_j)(k-\overline\mu_m)}= \frac 1 {\lambda_j- \overline\mu_m}   \;{\rm  for \;}  \lambda_j,\mu_m
\in \mathbb{C}_+.$$
Hence,
\begin{eqnarray*}
\left \langle \left ( \begin{array}{l} \tilde g\\ g\end{array}\right ), \left (  \begin{array}{l}{\tilde{f}}\\ {f}\end{array}\right )\right  \rangle_{\mathcal H_{sp}}
&=&\langle v_+, w_+\rangle+\langle v_-, w_-\rangle+
\sum_j\sum_m \dfrac i {\lambda_j -\overline \mu_m}
\langle (I_E-S^* (\mu_m) S(\lambda_j)e_j, f_m\rangle_E\\
& =&  \left \langle  \left (
 \begin{array}{c} v_-\\ \sum_j(  \Gamma (A-\overline \lambda _j)^{-1})^* e_j\\ v_+\end{array} \right ),
\left (    \begin{array}{c} w_-\\ \sum_m (\Gamma (A-\overline \mu_m)^{-1})^* f_m\\ w_+\end{array} \right )
\right  \rangle_{\mathcal H_{tr}}
\end{eqnarray*}
due to { \eqref{eq:SStarS1}} with $w={\mu_m}$ and $z= \lambda_j$.

Similarly one proves that the
second map, {given in \eqref{em2},}
  is also isometric. The proof does not differ essentially from the previous one and will  be omitted.  We note, more generally   that the proof also    follows from {interchanging} $A$ and $A^*$.
 
  To {complete the proof of} Lemma  \ref{lem4.4}  it is sufficient to consider two test vectors which are an arbitrary linear combination  of  vectors of  the first and  second type:  
  $$ {\overrightarrow{G}} :=  \left (    \begin{array}{c} v_-\\ \sum_j (\Gamma (A-\overline \lambda _j)^{-1})^* e_j+ \sum_m (\Gamma_* (A^*-{\overline\eta} _m)^{-1})^* e_{*m} \\ v_+\end{array} \right )
  $$
  with $v_- \in L_2 (\Rr_-, E_*)$, $v_+\in L_2 (\Rr_+,E)$, $\lambda _j\in \mathbb{C}_+$, $e_j \in E$  and $ {\eta}_m\in \mathbb{C}_-$, $e_{*m} \in E_*$ for all $j$ and $m.$
 
 Decomposing the vector  ${\overrightarrow{G}}  $
 into the sum of two vectors of the first and second type separately  we get
 $$  {\overrightarrow{G}}  :=
  \overrightarrow{{G}_1}+  \overrightarrow{{G}_2}  :=
  \left (    \begin{array}{c}
  0\\
  \sum_j (\Gamma(A-\overline \lambda_j)^{-1})^* e_j\\
  0
  \end{array} \right )
  +
   \left (    \begin{array}{c}
  v_-\\
  \sum_m (\Gamma_*(A^*-{\overline\eta} _m)^{-1})^* e_{*_m}\\
  v_+
  \end{array} \right ).
  $$
  Next we define
  $$ \Phi  {\overrightarrow{G}} :=
  \Phi  \overrightarrow{{G}_1}+\Phi\overrightarrow{{G}_2}=
 \left (    \begin{array}{l}
  \dfrac i { \sqrt{ 2 \pi}}  \sum_j  \dfrac{e_j}{k-\lambda_j} \\
 -\dfrac i { \sqrt{ 2 \pi}}  \sum_j \dfrac {S( \lambda_j) e_j}{k-\lambda_j}
   \end{array} \right )
    +
    \left (    \begin{array}{l}
 \hat v_+(k)  +  \dfrac i { \sqrt{ 2 \pi}}  \sum_m \dfrac{S^*({\overline\eta}_m)e_{*m}}{k-{\eta}_m}\\
\hat v_-(k) -\dfrac i { \sqrt{ 2 {\pi}} } \sum_m \dfrac { {e_{*  m}}}{ k-{\eta}_m}
    \end{array} \right ),
    $$
    while a similar notation  will be used for
    the second vector {$\overrightarrow{F}  :=
  \overrightarrow{F_1} + \overrightarrow{F_2}
  $}.
 {To show that $\Phi$ is well-defined, it is sufficient to show that it has  the isometry property.}
 
  By linearity it is enough to consider just one term of different types in each sum over, i.e.
  {$$
  \overrightarrow{{G}_1 } :=\left (    \begin{array}{c}
  0 \\
(\Gamma (A-\overline \lambda)^{-1})^* e \\
 0
   \end{array} \right )
  \quad {\rm and } \quad
      \overrightarrow{{G}_2 } :=\left (    \begin{array}{c}
 v_-  \\
 (\Gamma_*  (A^*- \overline \eta)^{-1})^* e_{*}\\
 v_+
   \end{array} \right ),
   $$}
   and similarly
    {$$
  \overrightarrow{F_1} :=\left (    \begin{array}{c}
 0 \\
 (\Gamma (A-\overline \mu)^{-1})^* f\\
 0
   \end{array} \right )
  \quad {\rm and } \quad
      \overrightarrow{F_2} :=\left (    \begin{array}{c}
 w_-  \\
 (\Gamma_*  (A^*- \overline \nu)^{-1})^* f_{*} \\
 w_+
   \end{array} \right ),
   $$}
	{with $\mu,\nu\in\C_-$. Now, using that $\Phi$ is an isometry on each type of test vectors individually,}
  \begin{eqnarray*}
   \langle \Phi (  \overrightarrow{{G}_1 }+\overrightarrow{{G}_2 }),
   \Phi (  \overrightarrow{{F_1}}+\overrightarrow{{F_2} } )
   \rangle_{\mathcal H_{sp}}
    &=&\langle \Phi   \overrightarrow{{G}_1 }, \Phi\overrightarrow{{F_1}}\rangle_{\mathcal H_{sp}}
   +
   \langle \Phi   \overrightarrow{{G}_2 },  \Phi\overrightarrow{{F_2}} \rangle_{\mathcal H_{sp}}
    + \langle \Phi   \overrightarrow{{G}_1 },  \Phi\overrightarrow{{F_2}}\rangle_{\mathcal H_{sp}}
   +
   \langle \Phi   \overrightarrow{{G}_2 }, \Phi\overrightarrow{{F_1}}\rangle_{\mathcal H_{sp}}
  \\
    &=&    \langle    \overrightarrow{{G}_1 },  \overrightarrow{{F_1}}\rangle _{H_{tr}}+
    \langle    \overrightarrow{{G}_2 },  \overrightarrow{{F_2}}\rangle _{H_{tr}} +
    \langle \Phi   \overrightarrow{{G}_1 }, \Phi\overrightarrow{{F_2}}\rangle_{\mathcal H_{sp}}  +
    {\overline {
    \langle \Phi   \overrightarrow{{F_1}}, \Phi\overrightarrow{G_2}\rangle}_{\mathcal H_{sp}}}.
   \end{eqnarray*}
	Therefore, {to show that
	\be\label{eq:isom}
	    \langle \Phi (  \overrightarrow{{G}_1 }+\overrightarrow{{G}_2 }),
   \Phi (  \overrightarrow{{F_1}}+\overrightarrow{{F_2} } )
   \rangle_{\mathcal H_{sp}} =   \langle  \overrightarrow{{G}_1 }+\overrightarrow{{G}_2 },
    \overrightarrow{{F_1}}+\overrightarrow{{F_2} } 
   \rangle_{\mathcal H_{tr}},
	\ee}
	it is enough to check that $\langle \Phi  \overrightarrow{{G}_1 }, \Phi  \overrightarrow{{F_2}} \rangle_{\mathcal H_{sp}}=  
  \langle  \overrightarrow{{G}_1 },   \overrightarrow{{F_2}} \rangle_{\mathcal H_{tr}}.$
   We have
    \begin{eqnarray*}
  \langle \Phi  \overrightarrow{{G}_1 }, \Phi  \overrightarrow{{F_2}} \rangle_{\mathcal H_{sp}}
  &=&   \left \langle   \left (  \begin{array}{cc}  I_E &  S^*(k) \\ S(k) & I_{E_*}
  \end{array}
  \right )
   \left (  \begin{array}{c}  \dfrac i { \sqrt{ 2 \pi}}   \dfrac e { k-\lambda} \\ - \dfrac i { \sqrt{ 2 \pi}}   \dfrac {S(\lambda) e}{k-\lambda}\end{array} \right ),
  {  \left (  \begin{array}{l}  
   \hat w_+ (k)+  \dfrac i { \sqrt{ 2 \pi}} \dfrac {S^*(\overline   \nu) f_*}{k-\nu} \\
  \hat w_- (k)-  \dfrac i { \sqrt{ 2 \pi}} \dfrac {f_*}  {k-\nu}   \end{array} \right )}
   \right \rangle_{ L_2 (\Rr,E \oplus E_*)}\\
	&=&
   \left \langle  
   \left (  \begin{array}{l}  
  \dfrac i { \sqrt{ 2 \pi} (k-\lambda)}
  ( I_E -  S^*(k)S(\lambda) ) e \\ \dfrac i { \sqrt{ 2 \pi}( k-\lambda)}( S(k) - S(\lambda) ) e
  \end{array}
  \right ),
   {  \left (  \begin{array}{l}  
   \hat w_+ (k)+  \dfrac i { \sqrt{ 2 \pi}} \dfrac {S^*(\overline   \nu) f_*}{k-\nu} \\
  \hat w_- (k)-  \dfrac i { \sqrt{ 2 \pi}} \dfrac {f_*}  {k-\nu}   \end{array} \right )}
   \right \rangle_{ L_2 (\Rr,E \oplus E_*)}.
    \end{eqnarray*}  
   Since $ \dfrac 1 { k-\lambda}  ( I_E-S^*(k) S(\lambda))e \in H_2^-(E)$ for $\lambda\in\C_+$, {while
     $ \hat w_+(k) + \dfrac i { \sqrt{ 2 \pi}}
    \dfrac {S^*(\overline \nu)f_*} { k-\nu}
    \in H_2^+(E)$ for $\nu\in\C_-$,} and
    $  \dfrac1 { k-\lambda}  (S(k)-S(\lambda))e  \in H_2^+(E_*)$ for  $\lambda\in\C_+$, while {$\hat  w_-(k) \in H_2^-(E_*)$},
		{many terms in the scalar product vanish and we are left with}
		    \begin{eqnarray*}
  \langle \Phi  \overrightarrow{{G}_1 }, \Phi  \overrightarrow{{F_2}} \rangle_{\mathcal H_{sp}}
  & =& \llangle
  \dfrac i { \sqrt{ 2 \pi}}  \dfrac { 1} { k-\lambda} (S(k)-S(\lambda))e,
  -\dfrac i { \sqrt{ 2 \pi}}
  {\dfrac { f_*} { k-\nu}}\rrangle_{L_2(\Rr,E_*)}.
    \end{eqnarray*}

    Finally, calculating the residue at point {$k=\overline \nu \in \mathbb C_+$},  we get
    \begin{eqnarray*}
   \langle \Phi \overrightarrow{{G}_1 },
    \Phi  \overrightarrow{{F_2}}
    \rangle_{H_{sp}}&=& -\dfrac 1{2 \pi} \llangle \dfrac{  (S(k)-S(\lambda))e}{k-\lambda}, {\dfrac{f_*}{k-\nu}}\rrangle_{L_2 (\Rr, E_*)} \ =\   -\dfrac 1{2 \pi}  \int_\Rr dk\dfrac{  \llangle ( S(k)-S(\lambda))e, {f_*}\rrangle_{E_*}}{(k-\lambda) (k-\overline \nu)}\\
    &
   = & \left ( -\dfrac{2\pi i} {2\pi }\right ) { \dfrac{\langle (    S(  \overline \nu)-S(\lambda))e, f_* \rangle_{E_*}}{(\overline \nu-\lambda)}}\ =\ -i   \left \langle {  \left ( \dfrac{  S(\overline \nu)-S(\lambda)}{\overline \nu-\lambda} \right ) e, f_*}\right \rangle_{E_*}.
 \end{eqnarray*}  
 By {\eqref{eq:Sdiff}, for $\lambda \in \C_+$ and $\nu\in \mathbb{C}_-$, we have}
$$ {\dfrac{ S(\overline \nu ) -S(\lambda)}{\overline \nu -\lambda}=i (\Gamma_*(A^*-\overline \nu)^{-1} )(  \Gamma (A-\overline \lambda))^{-1})^*.}$$
 Hence
 \bea
\langle \Phi \overrightarrow{{G}_1 }, \Phi \overrightarrow{{F_2}}\rangle_{\mathcal H_{sp}}&=&\langle (\Gamma_* (A^*-\overline \nu)^{-1}) (\Gamma (A-\overline \lambda))^{-1})^*e,f_*\rangle_{E_*}\\
&=&\langle (\Gamma (A-\overline \lambda )^{-1})^*e,( (\Gamma_* (A^*-\overline \nu))^{-1})^*f_*\rangle_{H} =\langle  \overrightarrow{{G}_1 },   \overrightarrow{{F_2}}\rangle_{\mathcal H_{tr}},
 \eea
{ as required. This completes the proof of the lemma.}
  \end{proof}

 \underline{\textbf{Step 4:}} $\Phi:\cH_{tr}\ominus (0,H_{sa},0) \to \mathcal H_{sp}$ is surjective.
 
 {We have shown that the map $\Phi$ admits a unique extension as an isometric operator to} the closure of all test vectors.
 According to  Lemma \ref{lem4.1}, the  closure coincides with $\mathcal H_{tr}\ominus(0,H_{sa},0)$,  where $\mathcal H_{sa}$  is the selfadjoint subspace of $A$ in the Langer decomposition.  We will use the same letter  $\Phi$  for the isometric extension  of $\Phi$ {to} $\mathcal H_{tr}\ominus (0,H_{sa},0)$. 
\begin{lemma}\label{lem4.5}
 The map $\Phi$,  { defined on test vectors in \eqref{em1} and \eqref{em2} and extended by linearity and continuity to} $\mathcal H_{tr} \ominus (0,H_{sa},0)$, has the property that $\overline{\Ran\Phi}=\mathcal H_{sp}$.
 \end{lemma}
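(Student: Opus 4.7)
The plan is to show $(\Ran\Phi)^\perp = \{0\}$ in $\mathcal{H}_{sp}$ by a standard duality argument, exploiting the Paley-Wiener decomposition of $L_2(\R,E)$ and $L_2(\R,E_*)$ into Hardy classes, together with Cauchy's formula. Suppose $\bigl(\tilde g,g\bigr)\in\mathcal{H}_{sp}$ is orthogonal to $\Ran\Phi$. Although $(\tilde g,g)$ itself need not lie in $L_2(\R,E)\oplus L_2(\R,E_*)$, the quantities $\tilde h:=\tilde g+S^*g$ and $h:=S\tilde g+g$ do, by \eqref{eq:est}. Moreover, for any test vector $\vec w$ whose image $\Phi\vec w=(\tilde g',g')$ lies in $L_2\oplus L_2$, the pairing reduces to
\[
  \bigl\langle(\tilde g,g),(\tilde g',g')\bigr\rangle_{\mathcal H_{sp}}
  =\langle \tilde h,\tilde g'\rangle_{L_2(\R,E)}
  +\langle h,g'\rangle_{L_2(\R,E_*)}.
\]

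First I would take test vectors of the first type from Lemma \ref{lem4.2} with $e=0$, so that $\Phi\vec w=(\hat v_+,\hat v_-)$. By Paley-Wiener, as $v_+\in L_2(\R_+,E)$ and $v_-\in L_2(\R_-,E_*)$ range freely, $\hat v_+$ sweeps out all of $H_2^+(E)$ and $\hat v_-$ all of $H_2^-(E_*)$, independently. The orthogonality condition then forces
\[
  \tilde h\perp H_2^+(E),\qquad h\perp H_2^-(E_*),
\]
i.e.\ $\tilde h\in H_2^-(E)$ and $h\in H_2^+(E_*)$.

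Next I would test against the ``pure middle'' vectors from Lemma \ref{lem4.2} with $v_\pm=0$, $e\in E$, $\lambda\in\C_+$. This yields
\[
  \frac{i}{\sqrt{2\pi}}\Bigl\langle\tilde h,\tfrac{e}{\cdot-\lambda}\Bigr\rangle
  -\frac{i}{\sqrt{2\pi}}\Bigl\langle h,\tfrac{S(\lambda)e}{\cdot-\lambda}\Bigr\rangle=0.
\]
Since $h\in H_2^+(E_*)$ and $\bar\lambda\in\C_-$, closing the contour in the upper half-plane shows the second integral vanishes. For the first, Cauchy's formula for $\tilde h\in H_2^-(E)$ at $\bar\lambda\in\C_-$ gives $\int_\R\frac{\tilde h(k)}{k-\bar\lambda}\,dk=-2\pi i\,\tilde h(\bar\lambda)$, so the equation collapses to $\langle\tilde h(\bar\lambda),e\rangle_E=0$ for all $e\in E$ and all $\lambda\in\C_+$. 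Analyticity then forces $\tilde h\equiv 0$. Symmetrically, testing against the second type of vectors from Lemma \ref{lem4.3} with $v_\pm=0$, $e_*\in E_*$, $\mu\in\C_-$, and using Cauchy's formula for $h\in H_2^+(E_*)$ at $\bar\mu\in\C_+$, yields $h\equiv 0$.

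Finally, Lemma \ref{lem:uniquedet} says the pair $(\tilde g,g)\in\mathcal H_{sp}$ is uniquely determined by $(\tilde h,h)$, so $(\tilde g,g)=0$ in $\mathcal H_{sp}$, giving $\overline{\Ran\Phi}=\mathcal H_{sp}$. The only technical subtlety is that $(\tilde g,g)$ generally does not lie in the $L_2$-sum, so the residue/Cauchy computations must be applied to the genuine $L_2$-functions $\tilde h$ and $h$ rather than to $\tilde g$ and $g$ separately; but this is precisely why the reformulation in terms of $\tilde h$ and $h$ (guaranteed by \eqref{eq:est}) makes the argument rigorous, in the same limiting spirit as the proof of Lemma \ref{lem:uniquedet}.
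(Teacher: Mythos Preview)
Your proof is correct and follows essentially the same approach as the paper: you show that any $(\tilde g,g)\in\mathcal H_{sp}$ orthogonal to $\Ran\Phi$ has $\tilde h:=\tilde g+S^*g\in H_2^-(E)$ and $h:=S\tilde g+g\in H_2^+(E_*)$ by testing against $(v_-,0,v_+)$, then kill $\tilde h$ and $h$ by testing against the ``pure middle'' vectors of first and second type, and conclude via Lemma~\ref{lem:uniquedet}. The only cosmetic differences are that the paper phrases the vanishing step in terms of Riesz projections rather than Cauchy's formula (these are equivalent), and treats the two types of test vectors in the opposite order.
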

 \begin{proof}
Consider a vector $\left  (   \begin{array}{c}  \tilde g \\ g \end{array}\right ) \in \mathcal  H_{sp}$
such that  $\left  (   \begin{array}{c}  \tilde g \\ g \end{array}\right ) $ is orthogonal to  $\Phi  \left  (   \begin{array}{c}  v_-\\0\\ v_+ \end{array}\right )$  for all $v_-\in L_2 (\Rr_-, E_*), v_+ \in L_2 (\Rr_+, E)$.
 Clearly the vector $ \left  (   \begin{array}{c}  v_-\\0\\v_+ \end{array}\right ) \in D (\Phi)={\mathcal H_{tr}} \ominus (0,H_{sa},0)$.  Moreover {it   is simultaneously a  test vector of the first and second type, with $e=0$ or $e_*=0$,}  and therefore its  image under $\Phi$    is easy to calculate. We have, due to $\hat v_+$ and $\hat v_-$ being $L_2$-functions,
 $$\left  (   \begin{array}{l}  \tilde g \\ g \end{array}\right ) \perp
 \Phi  \left  (   \begin{array}{c}  v_-\\0\\ v_+ \end{array}\right )=   \left  (   \begin{array}{l}  \hat v_+(k) \\ \hat v_-(k) \end{array}\right )\Leftrightarrow
 \left \langle    \left  (   \begin{array}{l}  \tilde g + S^*g\\ S \tilde g + g  \end{array}\right ),  \left  (   \begin{array}{l}  \hat v_+\\  \hat v_-   \end{array}\right ) \right \rangle_{  L_2  (\Rr; E\oplus E_*)}=0, $$
 i.e.~$\tilde g + S^* g\perp \hat v_+$  and  $S\tilde g +  g\perp \hat v_-$.
 Since, by  the Paley-Wiener Theorem  (see {\cite{Koosis}}), $\hat v_{\pm}$ { run  over the whole  of the spaces  $H_2^+ (E)$, $H_2^-(E_*)$, respectively,  and }   $ \tilde g +S^* g \in L_2(\Rr, E), \tilde Sg + g \in L_2(\Rr, E_*)$  we must have
 $    \tilde g + S^* g\in H_2^-( E)$  and  $   S  \tilde g +  g\in H_2^+( E_*)$.

 Additionally,
 $   \left  (   \begin{array}{c}  \tilde g  \\   g  \end{array}\right ) \perp \Phi   \left  (   \begin{array}{c}  0   \\  (\Gamma_* (A^*-\overline \mu)^{-1})^*e_*\\  0
  \end{array}\right )
 $  for all  $\mu \in \mathbb C_-$ and $e_*\in  E_*$.
 According to {\eqref{em2}},  we have
 $$\Phi   \left  (   \begin{array}{c}   0  \\  (\Gamma_* (A^*-\overline \mu)^{-1})^*e_*\\  0
  \end{array}\right ) =
   \left  (   \begin{array}{c}   \dfrac{ iS^* (\overline \mu ) e_*}{\sqrt{ 2 \pi}(k-\mu)} \\  - \dfrac{   ie_*}{\sqrt{ 2 \pi}(k-\mu)}
  \end{array}\right )
  $$
  and therefore by our assumption
  $$0=\left \langle \tilde g + S^*g,   \dfrac{ S^* (\overline \mu ) e_*}{k-\mu} \right \rangle_{L_2(\Rr,E)}-
  \left \langle S\tilde g + g, \dfrac{   e_*}{k-\mu}\right \rangle_{L_2(\Rr,E_*)}.
  $$
  {The first term is equal to $0$, since $\dfrac{ S^* (\overline \mu ) e_*}{k-\mu}\in H_2^+(E)$ and we have already seen that
		$g + S^*g\in  H_2^-(E)$}.  Hence
  $$  \left \langle S\tilde g + g, \dfrac{   e_*}{k-\mu}\right \rangle_{L_2(\Rr,E_*)} =0\quad\hbox{ for all } e_*\in E_* \hbox{ and } \mu \in \C_-,$$
  i.e. $$0= \dfrac1{2\pi i} \llangle S\tilde g + g, \dfrac{   e_*}{k-\mu} \rrangle_{L_2(\Rr,E_*)} =\dfrac1{2\pi i}  \int_\R \dfrac{  \langle (S\tilde g + g)(k),e_*\rangle_{E_*}}{k-\overline \mu}\ dk .$$
 The last equality means the Riesz projection $P_+$ onto $H_2^+$  of the scalar function $\langle  (S\tilde g + g)(k),e_*)_{E_*}  \in  {H_2}^+ $  is  equal  to  $ 0$, { and hence 
   $\langle  (S\tilde g + g)(k),e_*) \rangle _{E_*} =0 $ for a.e.~$k\in\R$ and  for any fixed $e_*\in E$.}  Choosing a countable orthonormal basis in $E_*$ as vectors $e_*$,  we have $(S\tilde g + g)(k)=0 $ for a.e.~$k\in \Rr$,  which means $S\tilde g + g=0$.
 
  Similarly, the other  condition
  $$\left  (   \begin{array}{l}  \tilde g \\ g \end{array}\right )\perp
  \Phi    \left  (   \begin{array}{c}  0   \\  (\Gamma(A-\overline \lambda)^{-1})^*e\\  0
  \end{array}\right )
  = \dfrac i { \sqrt{ 2 \pi}}  \left (   \begin{array}{c}   \dfrac{e}{k-\lambda} \\ -\dfrac{S(\lambda)e}{k-\lambda}
  \end{array}\right ), \lambda \in \C_+, e \in E,
  $$
  means that
  $$ \left  (   \begin{array}{c}   \tilde g + S^* g  \\  S\tilde g + g  
  \end{array}\right ) \perp
  \left  (   \begin{array}{c}   \dfrac e { k-\lambda} \\  -\dfrac{  S(\lambda) e}{k-\lambda}  \end{array}\right )$$
  in $L_2 (\Rr, E\oplus E_*)$.  
  Since $S\tilde g + g \in H_2^{+}(E_*)$ and $
  \dfrac{S(\lambda)e}{k-\lambda}\in H_2^-(E_*)$  for all $\lambda\in \mathbb C^{+}$ this orthogonality  condition can be written as
  $$ \llangle  \tilde g + S^*g, \dfrac e {k-\lambda} \rrangle _{L_2{(\Rr,E)}} =0, \quad \hbox{ for all } \quad e\in E, \lambda \in \mathbb C_+,$$
  i.e.  $P_- \langle \tilde g + S^* g,e\rangle_E=0$.  
  Since  $\langle \tilde g + S^* g,e\rangle)_E\in H_2^-$, we also have
  $\langle \tilde g + S^* g,e\rangle_E=0$ for all $e\in E$ or    $ \tilde g + S^* g=0 $, as in the previous case.
 In summary this yields
  $ (S\tilde g + g)=0$  and $ (\tilde g + S^* g)=0$  which, { by Lemma \ref{lem:uniquedet}}, gives
 $$  \left  (   \begin{array}{l}  \tilde g  \\   g  \end{array}\right )=0,$$ 
{which proves the result.}
\end{proof}

  Now we are ready to prove our previous claim concerning  the validity of using a single symbol  $\Phi$ for both maps of test vectors  acting separately on functions of the first and second kind.
 \begin{Lemma}\label{lem3.12}
 Let  two linear sets in $\mathcal H_{tr}$ be: 
$$\mathcal L_1 :=\Span  \left \{   \left (  \begin{array}{c}    v_- \\ ( \Gamma (A-\overline \lambda)^{-1})^*e \\v_+
  \end{array}
 \right ) \Bigg\vert v_-\in L_2 (\Rr_-, E_*), v_+\in   L_2 (\Rr_+, E),
 e \in E, \lambda \in \C_+ \right \}
 $$
 and
 $$\mathcal L_2 :=\Span  \left \{   \left (  \begin{array}{c}    v_- \\ ( \Gamma_* (A^*-\overline \mu)^{-1})^*e_* \\v_+
  \end{array}
 \right )  \Bigg\vert  v_-\in L_2 (\Rr_-, E_*), v_+\in   L_2 (\Rr_+, E),
 e_* \in E_*, \mu  \in \C_- \right \} ,
 $$
then 
$$ \overline{ \mathcal  { L }_1+  \mathcal  {L }_2
}  =\mathcal H_{tr}  \ominus (0,H_{sa},0)
$$
 and for any  vector $ \overrightarrow  F \in \overline {\mathcal  L}_1 \cap   \overline{  \mathcal  L}_2 $  we have $\Phi_1     \overrightarrow{F }
=\Phi_2     \overrightarrow{F}
 $ where
$\Phi_1$  
 is a map defined  on $\overline {\mathcal L }_1$ by (\ref{em1})  and    
 $\Phi_2$ is the map defined on
 $\overline {\mathcal L}_ 2$     by (\ref{em2})
 after taking the  closure of the isometric operators $\Phi_j : \mathcal L_j\to \mathcal H_{sp}$. $j=1,2$.
 \end{Lemma}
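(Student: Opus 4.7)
The plan has two parts mirroring the two claims of the lemma.

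For the density claim $\overline{\mathcal L_1+\mathcal L_2}=\mathcal H_{tr}\ominus (0,H_{sa},0)$, I would simply observe that $\mathcal L_1+\mathcal L_2$ coincides with the linear set $\tau$ of test vectors introduced in Lemma \ref{lem4.1}. Hence the result follows directly from that lemma, which already identifies the closure of $\tau$ with $\mathcal H_{tr}\ominus (0,H_{sa},0)$ via the Langer decomposition and the characterisation of $H_{sa}$ in Theorem \ref{thm:langer}.

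For the agreement claim, the key observation is that both $\Phi_1$ and $\Phi_2$ are characterised by the same compatibility condition \eqref{eq10}: Lemma \ref{lem4.2} shows that for every test vector $\vec F\in \mathcal L_1$ the image $\Phi_1\vec F=(\tilde g,g)$ satisfies $\tilde g+S^*g=\mathcal F_+\vec F$ and $S\tilde g+g=\mathcal F_-\vec F$, while Lemma \ref{lem4.3} shows the analogous identity for $\Phi_2\vec F$ when $\vec F\in\mathcal L_2$. The right-hand sides $\mathcal F_\pm\vec F$ depend only on the vector $\vec F=(v_-,u,v_+)\in\mathcal H_{tr}$ itself, not on how its middle component is expressed as a finite sum of resolvent-type vectors of the first or second kind.

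The next step is to propagate this compatibility to the closures. Since $\Phi_1,\Phi_2$ are isometric by Lemma \ref{lem4.4}, they extend to bounded (isometric) maps on $\overline{\mathcal L_1}$ and $\overline{\mathcal L_2}$ respectively. On the other hand, the estimate shown earlier,
\[
\Big\|\mathcal F_\pm \left(\begin{array}{c}v_-\\ u\\ v_+\end{array}\right)\Big\|\ \le\ \sqrt 3\,\Big\|\left(\begin{array}{c}v_-\\ u\\ v_+\end{array}\right)\Big\|_{\mathcal H_{tr}},
\]
shows that $\mathcal F_\pm$ are bounded on all of $\mathcal H_{tr}$, and the estimate \eqref{eq:est} controls the two linear functionals $(\tilde g,g)\mapsto \tilde g+S^*g$ and $(\tilde g,g)\mapsto S\tilde g+g$ by the $\mathcal H_{sp}$-norm. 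Passing to limits, \eqref{eq10} is preserved under the closure, so both $\Phi_1\vec F$ and $\Phi_2\vec F$ satisfy \eqref{eq10} with the same right-hand sides for any $\vec F\in\overline{\mathcal L_1}\cap\overline{\mathcal L_2}$.

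Finally, I would invoke Lemma \ref{lem:uniquedet}, which asserts that an element of $\mathcal H_{sp}$ is determined uniquely by the two boundary data $\tilde g+S^*g$ and $S\tilde g+g$. Applied to the difference $\Phi_1\vec F-\Phi_2\vec F$, whose corresponding boundary data vanish almost everywhere on $\R$, this forces $\Phi_1\vec F=\Phi_2\vec F$. I expect the only delicate point to be the limiting argument justifying that \eqref{eq10} persists on the closure, but this is routine given the boundedness of $\mathcal F_\pm$ and of the evaluation functionals, together with the limiting procedure already employed in the proof of Lemma \ref{lem:uniquedet}.
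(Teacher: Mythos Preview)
Your proof is correct but follows a genuinely different route from the paper's own argument for the second claim.

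For the density statement, you and the paper agree: both simply invoke Lemma~\ref{lem4.1}.

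For the agreement $\Phi_1\vec F=\Phi_2\vec F$ on the intersection, the paper does \emph{not} use the characterising condition \eqref{eq10} and Lemma~\ref{lem:uniquedet}. Instead it exploits the full mixed isometry identity \eqref{eq:isom} established in Lemma~\ref{lem4.4} (which shows that $\langle \Phi_1\vec G_1,\Phi_2\vec F_2\rangle_{\mathcal H_{sp}}=\langle \vec G_1,\vec F_2\rangle_{\mathcal H_{tr}}$ even across the two types of test vectors), together with the surjectivity result of Lemma~\ref{lem4.5}. Concretely, writing $\vec F=\vec F_1=\vec F_2$ with $\vec F_j\in\overline{\mathcal L_j}$, one computes
\[
\langle \Phi_1\vec F_1-\Phi_2\vec F_2,\ \Phi_1\vec G_1+\Phi_2\vec G_2\rangle_{\mathcal H_{sp}}
=\langle \vec F_1-\vec F_2,\ \vec G_1+\vec G_2\rangle_{\mathcal H_{tr}}=0
\]
for arbitrary $\vec G_j\in\overline{\mathcal L_j}$, and then uses that the set of vectors $\Phi_1\vec G_1+\Phi_2\vec G_2$ is dense in $\mathcal H_{sp}$.

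Your approach bypasses both the mixed-type inner-product computation and the surjectivity lemma, replacing them by the boundedness of $\mathcal F_\pm$ and of the evaluations $(\tilde g,g)\mapsto \tilde g+S^*g$, $(\tilde g,g)\mapsto S\tilde g+g$, plus the uniqueness Lemma~\ref{lem:uniquedet}. This is arguably more in the spirit of Theorem~\ref{thm4.5}, since it uses precisely the defining relation \eqref{eq10} of $\Phi$ to pin down its values. The paper's route, by contrast, packages the consistency check into the single polarisation-type identity \eqref{eq:isom}, at the cost of first proving that cross inner products are preserved and that the range is dense.
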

 \begin{proof}
That
$$ \overline{ \mathcal  { L }_1+  \mathcal  {L }_2
}  =\mathcal H_{tr}  \ominus (0,H_{sa},0)
$$ follows from Lemma \ref{lem4.1}.

 Let $ \overrightarrow{F}\in \overline {\mathcal L}_1 \cap  \overline {\mathcal L}_2$.
Then, using the isometry property of both maps $\Phi$ for test vectors of both first and second type, we may extend the identity \eqref{eq:isom} to the closure of both types of vectors, $\overline {\mathcal L}_1$ and $\overline {\mathcal L}_2$. Let 
$ \overrightarrow{F}=  \overrightarrow{F}_1=\overrightarrow{F}_2$ with $\overrightarrow{F}_1\in \overline {\mathcal L}_1$ and $\overrightarrow{F}_2 \cap  \overline {\mathcal L}_2$. Then for arbitrary $\overrightarrow{G}_1\in \overline {\mathcal L}_1$ and 
$\overrightarrow{G}_2\in \overline {\mathcal L}_2$
 $$ \langle \Phi_1  \overrightarrow{F}_1
 -\Phi_2  \overrightarrow{F}_2,  \Phi_1
  \overrightarrow{G}_1+ \Phi_2
  \overrightarrow{G}_2 \rangle_{\mathcal H_{sp}}=
  (  \overrightarrow{F}_1
 -   \overrightarrow{F}_2,    
  \overrightarrow{G}_1 +  
  \overrightarrow{G}_2 )_{\mathcal H_{tr}}=  0.
	$$
 
 Since by Lemma \ref{lem3.12} the set of images of the test vectors  from $\Span\{\mathcal L_1,\mathcal L_2\}$  is dense in $\mathcal H_{sp}$, we have
 $$ \Phi_1 \overrightarrow{F}-\Phi_2 \overrightarrow{F}=0.$$
 \end{proof}

{ \underline{\textbf{Step 5:}} The intertwining  identity}
 
  {So far we have seen that} Theorem \ref{thm4.5} delivers an isometric linear map
    $\Phi$  of   $\mathcal  H _{tr}\ominus \{  O \oplus H_{sa} \oplus O\}$ onto $ \mathcal H_{sp} =\left (  L_2 (\Rr; E\oplus E_*;   \left  ( \begin{array}{cc}   I_E & S^*(k)   \\  S(k) & I_{E_*} \end{array}\right )dk \right ) $
  satisfying conditions (\ref{eq10}).  The formula for $\Phi$ is   explicit on  the set of  special test vectors of    both the first and second kind.  {It remains to show that the transform  $\Phi$ constructed above gives the spectral representation of the minimal selfadjoint dilation $\mathcal L$ of the of the
completely  non-selfadjoint part of the maximally dissipative operator $A$.}

  \begin{Lemma}\label{lem:intertwining}
  (The intertwining  identity)
 {We have that}
  $\Phi ({\mathcal L_{tr}} -\lambda)^{-1}=(k-\lambda)^{-1} \Phi$  on $\mathcal H\ominus  \{ O\oplus H_{sa}\oplus O\}$ for all $\lambda\in \mathbb C\backslash \Rr$.
	 \end{Lemma}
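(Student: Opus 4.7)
The plan is to first reduce the intertwining identity to two analogous identities on the level of the channel maps $\mathcal{F}_\pm$, and then verify those identities on a dense subset of test vectors where an explicit ODE computation is available.

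For the reduction, note that by (\ref{eq10}) together with Lemma \ref{lem:uniquedet}, an element $(\tilde g, g)\in\mathcal{H}_{sp}$ is uniquely determined by the pair $(\tilde g + S^* g,\; S\tilde g + g) = (\mathcal{F}_+ \Phi^{-1}(\tilde g,g),\; \mathcal{F}_- \Phi^{-1}(\tilde g,g))$. Consequently $\Phi(\mathcal{L}_{tr}-\lambda)^{-1} = (k-\lambda)^{-1}\Phi$ is equivalent to the two identities
\[
\mathcal{F}_\pm (\mathcal{L}_{tr}-\lambda)^{-1} \;=\; (k-\lambda)^{-1}\,\mathcal{F}_\pm \qquad \text{on } \mathcal{H}_{tr}\ominus (0,H_{sa},0).
\]
Both sides are bounded for $\lambda\notin\R$ (since $\mathcal{L}_{tr}=\mathcal{L}_{tr}^*$ by Proposition \ref{prop1}), so by Lemma \ref{lem4.1} it suffices to verify them on a test vector from $\mathcal{L}_1\cup\mathcal{L}_2$.

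For such a test vector $\overrightarrow G = (v_-, u_0, v_+)$, I would compute $\overrightarrow W := (\mathcal{L}_{tr}-\lambda)^{-1}\overrightarrow G = (w_-, w, w_+)$ using the first-order ODE structure of $\mathcal{L}_{tr}$: on the channels, $iw'_\pm - \lambda w_\pm = v_\pm$, and the distributional identity $(w_\pm \mathbf{1}_{\R_\pm})' = w'_\pm \mathbf{1}_{\R_\pm} \pm w_\pm(0)\delta_0$ yields the clean formula
\[
\hat w_\pm(k) \;=\; \frac{\hat v_\pm(k)}{k-\lambda} \;\pm\; \frac{i\,w_\pm(0)}{\sqrt{2\pi}\,(k-\lambda)}.
\]
For the $H$-component, the middle equation $(T-\lambda)\overrightarrow W = u_0$ together with the projection formula $P_H(\mathcal{L}_{tr}-\lambda)^{-1}|_H = (A-\lambda)^{-1}$ (for $\lambda\in\C_-$) or $(A^*-\lambda)^{-1}$ (for $\lambda\in\C_+$) from Proposition \ref{prop1}, combined with the boundary-coupling conditions \textbf{(I)} and \textbf{(II)} of (\ref{def:L}), determines $w$ and the boundary values $w_\pm(0)$ in terms of resolvents of $A$, $A^*$, and $S(\cdot)$ applied to $u_0$ and the boundary data of $v_\pm$.

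Plugging the Fourier formulas and the expression for $w$ into the definition of $\mathcal{F}_+\overrightarrow W$, the required identity $\mathcal{F}_+\overrightarrow W = (k-\lambda)^{-1}\mathcal{F}_+\overrightarrow G$ collapses to an algebraic relation of the form
\[
\Gamma(A-k+i0)^{-1} u_0 \;=\; (k-\lambda)\,\Gamma(A-k+i0)^{-1} w \;+\; i\,w_+(0) \;-\; i\,S^*(k) w_-(0),
\]
with an analogous equation for $\mathcal{F}_-$ where $A,A^*$ and $\C_\pm$ are interchanged. Both of these can be established via the Hilbert identity for $\Gamma(A-k+i0)^{-1}$ paired with $(A^{(*)}-\lambda)^{-1}$, together with the two abstract Green identities (\ref{Green1})--(\ref{Green2}) and the $S$--$\Gamma$ relations (\ref{eq:Sgammastar})--(\ref{eq:Stildegammastar}) from Lemma \ref{lemma:adj}; the case of test vectors of the second type, with middle component $(\Gamma_*(A^*-\overline\mu_0)^{-1})^*e_*$, follows from the $A\leftrightarrow A^*$ symmetry already exploited in Lemma \ref{lem4.3}. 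The main obstacle is the careful bookkeeping in this last algebraic step: tracking signs, matching the four boundary objects $w,\ w_\pm(0)$ through the coupling conditions of Definition \ref{def:L}, and handling the cases $\lambda\in\C_+$ and $\lambda\in\C_-$ separately, since they produce different constraints on which $w_\pm(0)$ can be freely prescribed and different resolvent choices in Proposition \ref{prop1}.
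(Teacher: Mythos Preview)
Your approach is essentially the paper's: reduce to the two scalar identities $\mathcal{F}_\pm(\mathcal{L}_{tr}-\lambda)^{-1} = (k-\lambda)^{-1}\mathcal{F}_\pm$, compute the Fourier transforms of the channel components from the first-order ODE, and collapse everything to an algebraic relation among $\Gamma$, $\Gamma_*$, $S$ and the boundary values $w_\pm(0)$.

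Two simplifications in the paper's execution are worth pointing out. First, the restriction to test vectors is unnecessary: $\mathcal{F}_\pm$ are globally defined bounded maps on $\mathcal{H}_{tr}$, so the paper verifies the $\mathcal{F}_\pm$ identities directly on an arbitrary $(v_-,u,v_+)$, with no density argument. Second, and more importantly, the paper does not use the projection formula $P_H(\mathcal{L}_{tr}-\lambda)^{-1}|_H$ to find $w$; that formula only applies when the channel inputs vanish. Instead it writes the middle equation via the explicit formula for $T_*$ in Definition~\ref{def:T} and chooses the free parameter $\mu=-\overline{\lambda}$ (for $\lambda\in\C_-$), which makes the $\overline{\mu}$-term merge with $-\lambda w$ and yields at once
\[
u_0=(A-\lambda)\bigl[w+(\Gamma_*(A^*-\overline{\lambda})^{-1})^*w_-(0)\bigr],\qquad
w=(A-\lambda)^{-1}u_0-(\Gamma_*(A^*-\overline{\lambda})^{-1})^*w_-(0).
\]
Substituting this, together with the Hilbert identity for $(A-k+i\eps)^{-1}(A-\lambda)^{-1}$, the remaining algebraic check is dispatched using the adjoint of (\ref{eq:Sdiff}) and the domain condition \textbf{(I)} of (\ref{def:L}) at $\mu=-\overline{\lambda}$; the Green identities (\ref{Green1})--(\ref{Green2}) and relations (\ref{eq:Sgammastar})--(\ref{eq:Stildegammastar}) are not needed here. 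Your plan would work, but this $\mu=-\overline{\lambda}$ trick is what removes most of the bookkeeping you anticipated.
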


  \begin{proof}
  Let us assume w.l.o.g. that $H_{sa}=O$, i.e.~$A$ is a completely non-selfadjoint operator. Then, { $\Phi$ being surjective and isometric, we have }
  $\Phi^*\Phi=I_{\mathcal H_{tr}}$ and $ \Phi\Phi^*=I_{\mathcal H_{sp}}$. We will calculate  the resolvent $(\mathcal L_{tr} -\lambda)^{-1}$
  on the whole space  $\mathcal H_{tr}$.
 
   The equality
  \be\label{eq:Phi}\Phi (\mathcal L_{tr}-\lambda)^{-1}=(k-\lambda)^{-1}\Phi \ee
  is equivalent  to
  \begin{equation}
  \mathcal F_\pm (\mathcal L_{tr} -\lambda)^{-1} = (k-\lambda)^{-1} \mathcal F_\pm, \label{e16}
  \end{equation}
  for both signs simultaneously.  Indeed, we have   checked in Lemma \ref{lem4.2} and Lemma \ref{lem4.3}   that
  our map $\Phi$ satisfies the condition:
  \begin{equation}
   \left  ( \begin{array}{cc}   I_E & S^*  \\  S & I_{E_*} \end{array}\right )
  \Phi  \overrightarrow{F }=
     \left  ( \begin{array}{cc}  
      \mathcal F_+    \overrightarrow{F}  \\  
 \mathcal F _-    \overrightarrow{F}
      \end{array}\right ) \label{e17}
     \end{equation}
    on the test vectors $\overrightarrow{ F }$, {which generate a dense set in $\mathcal H_{tr}$, under the condition $H_{sa}=\{0\}$.}  Since $\norm{\Phi}=1$  and the map
     $${\overrightarrow{F} \mapsto \left  ( \begin{array}{cc}  
      \mathcal F_+    \overrightarrow{F}  \\  
 \mathcal F _-    \overrightarrow{F}
      \end{array}\right )}$$ { from the set of test vectors to $L_2(\Rr,E)\oplus L_2(\Rr,E_*)$}
     has  norm which obviously does not exceed $2$, we can extend the equality (\ref{e17}) to the whole space $\mathcal H_{tr}$ .

  We remind   the reader that   the   vector    
    $  \left  ( \begin{array}{c}   \tilde g   \\   g\end{array}\right ) \in \mathcal H_{sp}$ is equal to $0$ iff  
    $\left  ( \begin{array}{cc}   I_E & S^*   \\  S& I_{E_*} \end{array}\right ) \left  ( \begin{array}{c}   \tilde g  \\  g \end{array}\right )=0$,  so the equality (\ref{e16}) is equivalent (after multiplication on the left by the matrix function of $k\in \Rr$\,\,  $\left  ( \begin{array}{cc}   I_E & S^* (k)  \\  S(k)& I_{E_*} \end{array}\right )$ and $(k-\lambda)^{-1}$), to the condition (\ref{eq:Phi}) due to the commutation of the  two operations of multiplication by $(k-\lambda)^{-1}$ and by the matrix-function mentioned above.

    Consider the vector
    $\overrightarrow{\mathcal F }=   \left  ( \begin{array}{c}   v_-\\u  \\   v_+ \end{array}\right ) \in \mathcal H_{tr}$  and denote $  (\mathcal L_{tr} -\lambda)^{-1}   \left  ( \begin{array}{c}    v_-\\u      \\v_+ \end{array}\right )=:
     \left  ( \begin{array}{c}   \tilde v_-\\ \tilde u ,\\ \tilde   v_+ \end{array}\right )\in \mathcal H_{tr}$, $\lambda \not \in \Rr$.
  Then $(\tilde v_-,\tilde u ,\tilde v_+)\in \mathcal  D(\mathcal L_{tr})$   and,  by the  definition of the dilation,
    \begin{eqnarray}
     v_- = i\tilde v_-'-\lambda \tilde v_-, \label{eq:minus}  \\
     v_+ = i\tilde v_+'-\lambda \tilde v_+, \label{eq:plus}\\
    u = T_*\left  (   \begin{array}{c}
   \tilde v_- \\ \tilde u\\
   \tilde v_+  
  \end{array}
  \right )-\lambda \tilde u.
  \label{e18}
  \end{eqnarray}
  Inclusion of $(\tilde v_-, \tilde u, \tilde v_+)$ in $\mathcal D(\mathcal
     L_{tr})$ leads additionally to the following facts:
   $\tilde v_+\in H^1 (\Rr_+,E),
   \tilde v_-\in H^1 (\Rr_-,E_*)$ and
  \be\label{eq:dom} \left \{
  \begin{array}{c}
  \tilde u + (\Gamma_* (A^*+\mu)^{-1})^*\tilde v_-(0)\in \mathcal{D}(A),\mu\in \mathbb C_-,\\
  \tilde v_+(0)=S^*(-\mu) \tilde v_-(0) + i \Gamma (\tilde u + (\Gamma_*(A^*+\mu)^{-1})^*\tilde v_-(0)).
  \end{array} \right .
  \ee

  Now we need to prove that for any fixed $\lambda \in \mathbb C\backslash \Rr$
  \begin{equation}
  \mathcal F_\pm  
 \left (\begin{array}{c}
   \tilde v_- \\ \tilde u\\
   \tilde v_+  
  \end{array}
  \right )= (k-\lambda)^{-1}  \mathcal F_\pm
 \left  (\begin{array}{c}
    v_- \\  u\\
    v_+  
  \end{array}
  \right ).
  \label{e19}
  \end{equation}
  Using the Fourier transform for equation { 
  (\ref{eq:minus}),} we get, after extension of {$v_-, \tilde v_-$  by $0$ on the positive half-line}, that
  $$\hat v_-(\xi)=\dfrac i { \sqrt {2 \pi}} \int _{-\infty}^0 e^{i \xi t} \tilde v_-' (t) dt \ -\lambda
  \hat {  \tilde v}_-(\xi)=
  (\xi-\lambda) \hat {  \tilde v}_-(\xi)+\dfrac i { \sqrt { 2 \pi}} \tilde v_-(0).
  $$
  Similarly, {using \eqref{eq:plus}, we obtain}
  $$ \hat v_+(\xi)=  (\xi -\lambda) \hat {  \tilde v}_+(\xi)-\dfrac i { \sqrt { 2 \pi}} \tilde v_+(0).
  $$
    Using {\eqref{eq:Tstar}}  for the operator $T_*$ {in \eqref{e18},} we have, independently of  $\mu \in \mathbb \C_-$,
    $$u= A ({\tilde u+}(\Gamma_*(A^*+\mu)^{-1})^*\tilde v_- (0))+\overline \mu(\Gamma_*(A^* + \mu)^{-1})^* {\tilde v_- (0)}
-\lambda \tilde u).
$$
   If we fix the value of the parameter $\lambda\in \mathbb \C_-$  (it is enough to prove {(\ref{e15a}) for $\lambda$ in a    half-plane, as the result on the complementary half-plane follows immediately by taking adjoint operators in (\ref{e15a})}),  the convenient choice of $\mu$ is $\mu=-\overline \lambda \in \mathbb \C_-$.  Then
  $$
  u= (A-\lambda) [  \tilde
  u +(\Gamma_*(A^* -\overline \lambda  )^{-1})^*\tilde v_- (0)]\quad {\hbox{ or }\quad \tilde u = (A-\lambda)^{-1}u -(\Gamma _+(A^*-\overline \lambda)^{-1})^*\tilde v_-(0).}
$$  
Let us first consider the case $\mathcal F_+ $ in (\ref{e19}). {Inserting the expression for $\tilde u$ from above}, we now need to prove that for a.e. $k \in \mathbb R$
\begin{eqnarray*}
&& \lim_{{\eps \to + 0}}  (  -\dfrac 1 {  \sqrt{ 2 \pi}}) \Gamma (A-k + i \eps)^{-1} [  (A-\lambda)^{-1}u -(\Gamma _*(A^*-\overline \lambda)^{-1})^*\tilde v_-(0)]
+ S^*(k) \hat {\tilde v}_-(k)+ \hat {\tilde v}_+(k)\\
&&= (k-\lambda)^{-1} [
\lim_{{\eps \to + 0}}  (  -\dfrac 1 {  \sqrt{ 2 \pi}}) \Gamma (A-k + i \eps)^{-1}u + S^* (k) \hat v_- (k)+\hat v_+(k)].
\end{eqnarray*}
Using the Hilbert identity
$$ (A-k+i \eps )^{-1} (A-\lambda)^{-1}=
\dfrac  {  (A-\lambda)^{-1}-(A-k+i\eps)^{-1}}{\lambda -k + i \eps}
 $$
and substituting the explicit expressions for $\hat v_\pm (\xi)$ calculated above, we get   that the equality we have to prove can be reduced to the following:
\be\label{eq:3.22}\Gamma (A-\lambda)^{-1} u + (k-\lambda) \lim_{\eps \to +0}  \Gamma (A-k+ i \eps)^{-1}  ( \Gamma_*(A^*-\overline \lambda)^{-1})^* \tilde v_-(0) + i[  \tilde v_+(0)-S^*(k) \tilde v_-(0)]=0.
\ee

{Taking adjoints in \eqref{eq:Sdiff}, with $\mu=\overline \lambda$ and $\tilde\mu=\overline z$, both in $\C_+$ we have that
$$ (\lambda-z) \Gamma (A-\overline z)^{-1}  (\Gamma_* (A^*-\overline  \lambda)^{-1})^*=i  (S^*(\overline \lambda)-S^*(\overline z)).
$$}
To complete the proof for $\mathcal F_+$  we need to substitute $z:=k-i \eps\in \mathcal \C_-$  and take  the limit as $\eps \to +0$ in the strong topology of $E_*$ for a.e.~$k$.  Indeed, { following this procedure the proof of \eqref{eq:3.22} reduces to}
$$ 0=\Gamma (A-\lambda)
^{-1} u-i (S^*(\overline \lambda)-S^*(k)) \tilde v_-(0) + i[\tilde v_+(0) - S^*(k) \tilde v_-(0)]
 = \Gamma(A-\lambda)^{-1}u + i \tilde v_+(0) -i S^*(\overline \lambda) \tilde v_-(0).$$
 {Substituting $\mu=-\overline \lambda$ into \eqref{eq:dom}  and taking into account that  $(A-\lambda)^{-1}u=\tilde u +(\Gamma_*(A^*-\overline \lambda)^{-1})^*\tilde v_-(0)$,
 we see that the  expression vanishes, as desired}.
 The second equality
 $$
 \mathcal F_-
 \left (\begin{array}{c}
   \tilde v_- \\ \tilde u\\
   \tilde v_+  
  \end{array}
  \right )=(k-\lambda)^{-1}  
 \mathcal F_-   \left (\begin{array}{c}
    v_- \\   u\\
    v_+  
  \end{array}
  \right )
  $$
  admits a  similar proof. {Although $T=T_*$ on $\mathcal {D(L}_{tr})$, it is more convenient to use the operator $T$ from \eqref{eq:T}
 for the $\mathcal F_-$ case.}
   \end{proof}
	{This completes the proof of the main theorem.}
	\end{proof}
	
	 \begin{remark} 
  For minimal selfadjoint dilations   ${\mathcal L_{tr}} $ of a general maximally dissipative operator $A$ we have immediately  from {Lemma \ref{lem:intertwining}} and the Langer decomposition Theorem \ref{thm:langer}  that
  \begin{equation}
  (\mathcal L_{tr}-\lambda)^{-1}=\Phi^* (k-\lambda)^{-1} \Phi \oplus (A|_{H_{sa}}-\lambda)^{-1},\label{e15a}
   \end{equation}
  where $\mathcal L_{tr}$ is an operator in the translation form space $\mathcal H_{tr}$ and the orthogonal sum corresponds to the Langer decomposition
  $$
  \mathcal H_{tr}=\left( \mathcal H_{tr}\ominus (0,H_{sa},0)\right) \oplus (0,H_{sa},0)
  $$ and the selfadjoint operator $A|_{H_{sa}}$
  acts in $(0,H_{sa},0)$ as an operator in the second component.
   \end{remark}
	
	{From the theorem, we get the following corollary.}
	
   \begin{Corollary} \label{unlabCorp64}
   We have the dilation property 
   $$
   P_H(\mathcal L _{tr}-\lambda)^{-1}|_H=
   \left \{
   \begin{array}{cc}
   (A^*-\lambda)^{-1}, & \lambda\in\C_+,\\(A-\lambda)^{-1}, & \lambda\in\C_-  \end{array} \right .
   $$
   in $\mathcal H_{tr}$. If $A$ is completely non-selfadjoint, this can be transformed into the spectral form version
   $$
   P_H\Phi^*(k-\lambda)^{-1}\Phi |_H=
    \left \{
   \begin{array}{cc}
   (A^*-\lambda)^{-1}, & \lambda\in\C_+,\\(A-\lambda)^{-1}, & \lambda\in\C_- ,  \end{array} \right .
   $$
   or
   $$
   (\Phi P_H \Phi^*)(k-\lambda)^{-1}|_{{\Phi (H)}}=
    \left \{
   \begin{array}{cc}
  \Phi (A^*-\lambda)^{-1} \Phi^*|_{{\Phi(H)}},& \lambda\in\C_+,\\\Phi (A-\lambda)^{-1}\Phi^*|_{{\Phi (H)}} ,  &\lambda\in\C_-.   \end{array} \right . 
   $$
	\end{Corollary}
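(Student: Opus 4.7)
The plan is to derive this corollary as a direct formal consequence of results already established, with no substantive new work required. The first displayed identity is literally Proposition \ref{prop1}(2): once one identifies $P_H$ as the orthogonal projection of $\mathcal{H}_{tr} = L_2(\R_-, E_*) \oplus H \oplus L_2(\R_+, E)$ onto its middle summand $(0, H, 0) \cong H$, that proposition records exactly the two half-plane cases $(A-\lambda)^{-1}$ for $\lambda \in \C_-$ and $(A^*-\lambda)^{-1}$ for $\lambda \in \C_+$. I would begin the proof by quoting this explicitly.

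For the second form, I would invoke the intertwining identity from Lemma \ref{lem:intertwining}, namely $\Phi(\mathcal{L}_{tr} - \lambda)^{-1} = (k-\lambda)^{-1}\Phi$, valid on $\mathcal{H}_{tr} \ominus (0, H_{sa}, 0)$. Under the standing cns hypothesis, $H_{sa} = \{0\}$ and by Lemma \ref{lem4.5} the map $\Phi$ is a unitary bijection between $\mathcal{H}_{tr}$ and $\mathcal{H}_{sp}$, so $\Phi^*\Phi = I_{\mathcal{H}_{tr}}$ and $\Phi\Phi^* = I_{\mathcal{H}_{sp}}$. Multiplying the intertwining identity on the left by $\Phi^*$ then yields
\[
(\mathcal{L}_{tr} - \lambda)^{-1} = \Phi^*(k-\lambda)^{-1}\Phi \quad \text{on } \mathcal{H}_{tr},
\]
and substituting this into the first dilation formula gives the second displayed identity of the corollary at once.

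For the third form, I would conjugate the second identity by $\Phi$: premultiply by $\Phi$ and postmultiply by $\Phi^*$. The relation $\Phi\Phi^* = I_{\mathcal{H}_{sp}}$ collapses the composition $\Phi\Phi^*(k-\lambda)^{-1}\Phi\Phi^*$ to $(k-\lambda)^{-1}$, while $P_H$ transports to the projection $\Phi P_H \Phi^*$ in $\mathcal{H}_{sp}$ whose range is $\Phi(H)$. The restriction symbol $|_H$ becomes $|_{\Phi(H)}$ naturally because $\Phi$ maps $H$ bijectively onto $\Phi(H)$.

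There is no genuine obstacle here: the argument is pure bookkeeping combining Proposition \ref{prop1} with the spectral representation from Theorem \ref{thm4.5} and its intertwining identity. The only point that requires a line of care is the reduction to the completely non-selfadjoint case, which is exactly what allows $\Phi$ to be treated as unitary between the full spaces $\mathcal{H}_{tr}$ and $\mathcal{H}_{sp}$ rather than merely between $\mathcal{H}_{tr} \ominus (0, H_{sa}, 0)$ and $\mathcal{H}_{sp}$; this is precisely the hypothesis stated in the corollary for the latter two identities.
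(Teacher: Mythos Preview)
Your proposal is correct and matches the paper's approach exactly: the paper states this corollary as an immediate consequence of Theorem \ref{thm4.5} (in particular the intertwining identity of Lemma \ref{lem:intertwining}) combined with Proposition \ref{prop1}(2), without giving a separate formal proof. Your write-up simply makes explicit the bookkeeping that the paper leaves implicit.
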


  {Set $K:= \Phi (0,H,0)\subset \mathcal H_{sp}$}. If $A$ is a completely non-selfadjoint  maximally dissipative operator, then
   $ \Phi P_H \Phi^*=P_K$ is the orthogonal projection on to the subspace $K$ in $\mathcal H_{sp}$  and
   $P_K\dfrac 1 { k-\lambda}$ is unitarily equivalent to the resolvent of $A$ (for $\lambda\in\C_-$) or $A^*$ (if $\lambda\in\C_+$).
   Explicit calculations  (see \cite{Pav75,Pav76}) give that
   $$
   P_K  \left  ( \begin{array}{c} \tilde g\\ g
 \end{array} \right )=
 \left  ( \begin{array}{c} \tilde g- P_+(\tilde g + S^* g)\\ g  - {P_-}(S\tilde g +  g)
 \end{array} \right ) \quad \hbox{ for }\quad \left  ( \begin{array}{c} \tilde g\\ g
 \end{array} \right ) \in \mathcal H_{sp},
 $$
 where $P_\pm$ are Riesz projections onto
 {$$H_2^+(E)\subset L_2(\Rr, E)\quad \hbox{ and }\quad H_2^-(E_*)\subset L_2(\Rr, E_*)
 $$
 respectively.}  The last formula is well-defined since $\tilde g + S^* g \in L_2(\Rr, E)$ and
 $S\tilde g +  g \in L_2(\Rr, E_*)$  for all $(\tilde g, g)\in \mathcal H_{sp}$.
 
 {Note that  the images of  Lax-Phillips's  incoming and outgoing channels  (subspaces) $\mathcal D_-=(  L_2(\Rr_-, E_*), 0,0  )$  and
 $\mathcal D_+=(0,0,L_2 (\Rr_+, E))$
 under $\Phi$ are}
 $${\Phi}\mathcal D_-= \left  ( \begin{array}{c} 0\\ H_2^-(E_*) \;\; \end{array} \right ),  \quad
 {\Phi}\mathcal D_+=  \left  ( \begin{array}{c} H_2^+(E)\\ 0 \end{array} \right )
 $$
 in a completely symmetric way.  This property of Pavlov's version of the functional  model is a very convenient  tool  in model calculations.
 Note that the minimality of the selfadjoint dilation  follows immediately from the minimality of the translation form of the dilation.  The last fact holds true in {both the completely non-selfadjoint  and the general maximally dissipative operator cases}.

 \section{Example: a limit-circle problem}\label{section:5}
 Explicit calculation of the ingredients appearing in the functional model, for concrete examples, can be quite non-trivial. In this section we 
 consider a one-dimensional Schr\"{o}dinger problem with one regular endpoint and one singular, limit-circle endpoint, and compute expressions
 for the characteristic function and two other operators appearing in the functional model. {We choose a limit-circle endpoint since
 this allows freedom of choice in the boundary conditions, and hence reveals the different explicit r\^{o}les of the boundary conditions and  of
 the imaginary part of the potential.}  Our calculations allow for a limit-circle-oscillatory endpoint,
 and hence for spectrum with real part unbounded below; {limit-circle non-oscillatory endpoints can be transformed to
 regular endpoints \cite{NiessenZettl92} and are therefore covered by our previous work \cite{BMNW20}.}
 As for all limit-circle problems, there is no essential spectrum.
 
 Consider the expression
\[ \ell u := -u'' + Q(x) u  \;\;\; x \in (0,1]; \]
here we suppose that $Q$ is real-valued, regular at $x=1$ and limit-circle at $x=0$. We choose a real-valued
basis $\{ c, s\}$ of the solution space of the equation $\ell u = 0$ determined by initial conditions 
$s(1) = 0$, $s'(1) = 1$, together with the Wronskian condition $sc'-s'c\equiv 1$. We associate 
with the expression $\ell$ an operator $L_B$ with domain
\[ D(L_B) = \{u\in L_2(0,1)\; | \; \ell u \in L_2(0,1), \;\; u(1) = 0,\;\; [u,s](0) + B [u,c](0) = 0\}. \]
Here the square bracket notation denotes the Wronskian, i.e.~$[u,s](x) = u(x)s'(x)-u'(x)s(x)$, and values
at $x=0$ are to be interpreted in terms of limits. $B\neq 0$ is a complex number; if $B$ is real then it
is well known that $L_B$ is self-adjoint.

Assume that $\lambda=0$ is not an eigenvalue of $L_B$. Then we may calculate the resolvent
of $L_B$ by the variation of parameters formula: $u = L_B^{-1}f$ if and only if
\begin{equation}\label{eq:1}u(x) = c(x) \int_x^1 s(t)f(t)dt + s(x) \int_0^x c(t) f(t)dt + \frac{1}{B} s(x) \int_0^1 s(t)f(t)dt. \end{equation}
It is then a simple calculation to show that
\begin{equation}\label{eq:2}u'(x) = c'(x) \int_x^1 s(t)f(t)dt + s'(x) \int_0^x c(t) f(t)dt + \frac{1}{B} s'(x) \int_0^1 s(t)f(t)dt. \end{equation}

Now we wish to examine conditions on $B$ to have a dissipative operator $L_B$. Evidently
\begin{equation}\label{eq:3} \langle L_B u,u \rangle = \lim_{x\searrow 0} \left[ u'(x)\overline{u(x)} + \int_x^1 \left( |u'(t)|^2 + Q(t)|u(t)|^2 dt\right) \right], 
\end{equation}
and so since $Q$ is real-valued it follows that $L_B$ is dissipative if and only if for all $u\in D(L_B)$
\[  \lim_{x\searrow 0} \Im(u'(x)\overline{u(x)}) \geq 0. \] 

In order to simplify the calculations slightly we observe that if we restrict our attention to functions $f$ which
vanish in a neighbourhood of $x=0$ then, since such $f$ are dense in $L_2(0,1)$, the resulting $u=L_B^{-1}f$
which we generate will form a core of $D(L_B)$. It is therefore sufficient to check dissipativity on such $u$.
If $x$ is sufficiently small to lie outside the support of $f$ then from (\ref{eq:1}) and (\ref{eq:2}),
\[ u(x) = \left(c(x) + \frac{1}{B} s(x)\right) \int_0^1 s(t)f(t)dt, \;\;\; u'(x) = \left(c'(x) + \frac{1}{B} s'(x)\right) \int_0^1 s(t)f(t)dt, \]
and thus, as $c$ and $s$ are real-valued,
\begin{equation}\label{eq:4} \Im(u'(x)\overline{u(x)}) = \Im\left(\frac{1}{B}\right) (s'(x)c(x)-s(x)c'(x))\left|\int_0^1 s(t)f(t)dt\right|^2 = -\Im\left(\frac{1}{B}\right)\left|\int_0^1 s(t)f(t)dt\right|^2, 
\end{equation}
where in the last step we have used the fact that $sc'-s'c\equiv 1$. Thus $L_B$ is dissipative if and only if $\Im(B)\geq 0.$ {We will assume $\Im(B)\geq 0$ from now on.}

We now cast this example into a boundary-triples framework \mm{\cite{BMNW08,DM91,DM92,GG91}}. Our maximal operator $L_{max}$ is given by the expression
\[ L_{max}u = \ell u; \;\;\; D(L_{max}) = \{u\in L_2(0,1)\; | \; \ell u \in L_2(0,1), \;\; u(1) = 0\}, \]
and we wish to compute $\langle L_{max}f,g\rangle - \langle f,L_{max}g\rangle$, for $f,g\in D(L_{max})$. Using the von Neumann decomposition,
together with the fact that $[s,c]=1$, we have, in a neighbourhood of $x=0$,
\[  f(x) = f_0(x) + [f,c](0)s(x) - [f,s](0)c(x), \;\;\; g(x) = g_0(x) + [g,c](0)s(x) - [g,s](0)c(x), \]
in which $f_0,g_0 \in D(L_{max}^*)=\{u\in L_2(0,1)\; | \; \ell u \in L_2(0,1), \;\; u(1) = 0, \; [u,c](0)=0, \; [u,s](0)=0\}$. Also, a straightforward calculation using integration by parts shows that
\[ \langle L_{max}f,g\rangle - \langle f,L_{max}g\rangle = -[f,\overline{g}](0). \] 
It then follows that
\[  \langle L_{max}f,g\rangle - \langle f,L_{max}g\rangle = -[f,\overline{g}](0) = [f,c](0)\overline{[g,s](0)} - [f,s](0)\overline{[g,c](0)}. \]
If we define boundary operators $\Gamma_0$, $\Gamma_1$ on $D(L_{max})$ by
\[ \Gamma_0 f = [f,c](0), \;\;\; \Gamma_1 f = [f,s](0), \]
then the fundamental boundary triple identity 
can be written in the usual form
\[  \langle L_{max}f,g\rangle - \langle f,L_{max}g\rangle = \Gamma_0 f \, \overline{\Gamma_1 g} - \Gamma_1 f \, \overline{\Gamma_0 g}. \]
The boundary condition associated with $D(L_B)$ is $\Gamma_1 u + B \Gamma_0 u = 0$. Using equations (\ref{eq:1}), (\ref{eq:2}) we see
that if $u = L_B^{-1}f \in D(L_B)$ then
\[ [u,c](x) = \int_0^x c(t)f(t)dt + \frac{1}{B}\int_0^1s(t)f(t)dt, \]
whence, taking the limit as $x\searrow0$,
\[ \Gamma_0 u = \frac{1}{B}\int_0^1s(t)f(t)dt. \]
Combining this with (\ref{eq:3}) and (\ref{eq:4}) we find that 
\[ \Im \langle L_Bu,u\rangle = \Im(B) |\Gamma_0 u|^2 = \left|\sqrt{\Im(B)}\Gamma_0 u\right|^2.\]

Let $V\in L_\infty(0,1)$ be an essentially bounded, non-negative function. We define an operator $A_B$ by
\[ A_B = L_B + i V; \;\;\; D(A_B) = D(L_B). \]
Then 
\begin{equation}\label{eq:5} 
\Im \langle A_Bu,u\rangle = \Im \langle L_Bu,u\rangle +  \langle Vu,u\rangle =  \left|\sqrt{\Im(B)}\Gamma_0 u\right|^2 +  \langle\sqrt{V}u,\sqrt{V}u\rangle
\end{equation}
If we define a map $\Gamma:D(A_B)\longrightarrow {\mathbb C}\oplus L_2(V^{-1}({\mathbb R}_+))$ by
\begin{equation}\label{eq:6}
\Gamma u = \left(\begin{array}{c} \sqrt{\Im B}\Gamma_0 u \\ \sqrt{V}u \end{array}\right) = \left(\begin{array}{c} \sqrt{\Im B}[u,c](0) \\ \sqrt{V}u \end{array}\right)
\end{equation}
then we have the Lagrange identity
\begin{equation}\label{eq:7} 
\Im\langle A_Bu,u\rangle = \left\langle\Gamma u, \Gamma u \right\rangle_{{\mathbb C}\oplus L_2(V^{-1}({\mathbb R}_+))}. 
\end{equation}
Note that for this example, we also have
\begin{equation}\label{eq:8} 
\Im\langle A_B^*u,u\rangle = -\left\langle\Gamma u, \Gamma u \right\rangle_{{\mathbb C}\oplus L_2(V^{-1}({\mathbb R}_+))}. 
\end{equation}
We are thus in the simple situation $E = E_*$ and $\Gamma_* = \Gamma$.

The characteristic function $S(z)$ is defined by 
\[ S(z)\Gamma u = \Gamma (A_B^*-z)^{-1}(A_B-z)u, \quad {z\in\C_+}. \]
We now calculate $S(z)$. The first step is to find an expression for $v:=(A_B^*-z)^{-1}(A_B-z)u$. To this end we introduce solutions
$\tilde{s}_z$ and $\tilde{\phi}_z$ of the formal adjoint equation
\[ -y'' + (Q-iV)y = z y, \]
determined by the conditions 
\begin{equation}\label{eq:ic}
 \tilde{s}_z(1) = 0, \; \tilde{s}_z'(1) = 1;  \;\;\; \Gamma_0\tilde{\phi}_z = -1, \; \Gamma_1 \tilde{\phi}_z = \overline{B}.
 \end{equation}
 The existence of $\tilde{s}_z$, which is an entire function of $z$, is immediate from standard results on regular initial value
problems. The existence of an entire $\tilde{\phi}_z$ is less obvious, but may be proved by using a variation-of-parameters
argument. From (\ref{eq:ic}), $\tilde{\phi}_z$ satisfies the left-hand boundary condition associated with $A_B^*$, viz.
 \begin{equation} \Gamma_1 \tilde{\phi}_z + \overline{B} \Gamma_0 \tilde{\phi}_z = 0. \label{eq:mmbc17}\end{equation}
Moreover, 
\begin{equation}\label{eq:ic2} \tilde{\phi}_z = -\overline{B}c - s + g_z, \end{equation}
in which $g_z$ is a function such that $\Gamma_0 g_z = 0 = \Gamma_1 g_z$.

We also define the function $\tilde{M}(z)$ by
\begin{equation}\label{Mtdef} \Gamma_1 \tilde{s}_z = \tilde{M}(z)  \Gamma_0 \tilde{s}_z; \end{equation}
this means
\[ \tilde{M}(z) = \lim_{x\searrow 0} \frac{\tilde{s}_z(x)s'(x) - \tilde{s}_z'(x)s(x)}{\tilde{s}_z(x)c'(x) - \tilde{s}_z'(x)c(x)}. \]
{Note that the denominator does not vanish, as $\Gamma_0 \tilde{s}_z=0$ would imply that $z\in\C_+$ is an eigenvalue of the anti-dissipative operator $A_\infty^*$ with eigenfunction $\tilde{s}_z$, which is impossible.}
The equation $v=(A_B^*-z)^{-1}(A_B-z)u$ is equivalent to $(A_B^*-z)v=(A_B-z)u$, which means that
\[ -(v-u)'' + (Q-iV)(v-u) - z (v-u) = 2iVu. \]
We have $v(1)=0=u(1)$ and so variation of parameters yields, for some constant $a\in\mathbb C$,
\begin{equation}\label{eq:9a} v(x) = u(x) + \frac{\tilde{\phi}_z(x) \int_x^1 \tilde{s}_z(t)2iV(t)u(t)dt + \tilde{s}_z(x)\int_0^x \tilde{\phi}_z(t)2iV(t)u(t)dt}{[\tilde{\phi}_z,\tilde{s}_z]} + a \tilde{s}_z(x). \end{equation}
The value of $a$ is determined by imposing the condition $v\in D(A_B^*)$, which means
\[ \Gamma_1 v + \overline{B}\Gamma_0 v = 0. \]
Before doing this, however, we manipulate the denominator $[\tilde{\phi}_z,\tilde{s}_z]$ appearing in (\ref{eq:9a}). In view of (\ref{eq:ic2}) we have
\[ [\tilde{\phi}_z,\tilde{s}_z] = [-\overline{B}c-s,\tilde{s}_z] = \overline{B}\Gamma_0 \tilde{s}_z + \Gamma_1 \tilde{s}_z = (\overline{B}+\tilde{M}(z))\Gamma_0\tilde{s}_z. \]
Thus eqn. (\ref{eq:9a}) becomes
\begin{equation}\label{eq:9} v(x) = u(x) + \frac{\tilde{\phi}_z(x) \int_x^1 \tilde{s}_z(t)2iV(t)u(t)dt + \tilde{s}_z(x)\int_0^x \tilde{\phi}_z(t)2iV(t)u(t)dt}{(\overline{B}+\tilde{M}(z))\Gamma_0\tilde{s}_z } + a \tilde{s}_z(x). \end{equation}
From (\ref{eq:9}), bearing in mind that $(\Gamma_1+\overline{B}\Gamma_0)\tilde{\phi}_z=0$ and $(\Gamma_1+B\Gamma_0)u=0$, it follows that
\[ \Gamma_1v + \overline{B}\Gamma_0v  = -2i\Im(B)\Gamma_0 u  + a(\tilde{M}(z)+\overline{B})\Gamma_0 \tilde{s}_z,\]
whence, since $\Gamma_1v + \overline{B}\Gamma_0v=0$, we have
\[ a = \frac{2i\Im(B)\Gamma_0 u}{(\overline{B}+\tilde{M}(z))\Gamma_0 \tilde{s}_z}, \]
and
\begin{equation}\label{eq:9c} 
v(x) = u(x) + \frac{\tilde{\phi}_z(x) \int_x^1 \tilde{s}_z(t)2iV(t)u(t)dt + \tilde{s}_z(x)\int_0^x \tilde{\phi}_z(t)2iV(t)u(t)dt}{(\overline{B}+\tilde{M}(z))\Gamma_0\tilde{s}_z } 
+\frac{2i\Im(B)\Gamma_0 u}{(\overline{B}+\tilde{M}(z))\Gamma_0 \tilde{s}_z}\tilde{s}_z(x). 
\end{equation}
In particular, recalling that $\Gamma_0\tilde{\phi}_z = -1$, see eqn. (\ref{eq:ic}), it now follows that
\[ \Gamma_0 v = \left\{\frac{B+\tilde{M}(z)}{\overline{B}+\tilde{M}(z)}\right\}\Gamma_0 u - \frac{1}{(\overline{B}+\tilde{M}(z))\Gamma_0\tilde{s}_z }\int_0^1 \tilde{s}_z2iVu. \]
Observing that $Vu = \sqrt{V}\sqrt{V}u$, the characteristic function can be written as a $2 \times 2$ block operator matrix,
\[ S(z) = \left(\begin{array}{cc} S_{11}(z) & S_{12}(z) \\ S_{21}(z) & S_{22}(z) \end{array}\right), \]
in which
\begin{equation}\label{eq:s11s12} \left.
\begin{array}{c}S_{11}(z) = {\displaystyle \left\{\frac{B+\tilde{M}(z)}{\overline{B}+\tilde{M}(z)}\right\} = {1 + \frac{2i\Im(B)}{\overline{B}+\tilde{M}(z)},} \;\;\;\;
 S_{12}(z)\bullet  = \frac{-2i\sqrt{\Im B}}{(\overline{B}+\tilde{M}(z))\Gamma_0\tilde{s}_z }\int_0^1 \tilde{s}_z\sqrt{V}\bullet,} \\
 \\
{\displaystyle S_{21}(z) = \left\{ \frac{2i\sqrt{\Im B}\sqrt{V} \tilde{s}_z}{(\overline{B}+\tilde{M}(z))\Gamma_0\tilde{s}_z}\right\},  }\\
 \\
{\displaystyle S_{22}(z) \bullet = I \bullet 
+ \frac{2i\sqrt{V}}{(\overline{B}+\tilde{M}(z))\Gamma_0\tilde{s}_z}\left\{\tilde{\phi}_z\int_x^1 \tilde{s}_z \sqrt{V} \bullet + \tilde{s}_z\int_0^x \tilde{\phi}_z\sqrt{V}\bullet\right\} = {I \bullet + 2i \sqrt{V(x)}\int_{0}^1 G(x,t;z)\sqrt{V(t)}\bullet(t)dt,} }
\end{array}\right\}
\end{equation}
in which
{
\begin{equation}\label{GVerdi}
G(x,t;z) = \frac{\tilde{\phi}_z(\min(x,t))\,\tilde{s}_z(\max(x,t))}{(\overline{B}+\tilde{M}(z))\Gamma_0\tilde{s}_z}.
\end{equation}
}
\begin{Remark}
Since $\tilde{M}(z)\Gamma_0 \tilde{s}_z = \Gamma_1 \tilde{s}_z$, see (\ref{Mtdef}), the condition $(\overline{B}+\tilde{M}(z))\Gamma_0\tilde{s}_z=0$ 
is equivalent to $\overline{B}\Gamma_0 \tilde{s}_z + \Gamma_1\tilde{s}_z = 0$, which happens precisely when $\tilde{s}_z$ is an eigenfunction of
$A_B^*$. Since the singular point associated with the differential expression for $L_B$ is of limit circle type, $L_B$ has empty essential spectrum. The same is true of $A_B$ and $A_B^*$ since these are relatively compact perturbations of $L_B$. The singularities of 
$S(z)$ are therefore precisely the eigenvalues of $A_B^*$. If $\Im B >0$ then these lie strictly in the lower half-plane.
\end{Remark}

The other two main ingredients which appear in the functional model, and for which explicit expressions can be found in terms of solutions of initial
value problems and $M$-functions, are the operators $\Gamma(A_B-z)^{-1}$ and $\Gamma_*(A_B^*-z)^{-1}$. Calculating these quantities
is not more difficult than calculating the characteristic function $S$ itself. We illustrate this by obtaining an expression for $\Gamma(A_B-z)^{-1}$. The ingredients required are
the solutions ${s}_z$ and ${\phi}_z$ of the equation
\[ -y'' + (Q+iV)y = z y, \quad {z\in\C_-}, \]
determined by the conditions 
\begin{equation}\label{eq:icb}
 {s}_z(1) = 0, \; {s}_z'(1) = 1;  \;\;\; \Gamma_0{\phi}_z = -1, \; \Gamma_1 {\phi}_z = {B}. 
 \end{equation}
The conditions on ${\phi}_z$ ensure that 
\begin{equation}\label{eq:ic2b} {\phi}_z = -{B}c - s + g_z, \end{equation}
in which $\Gamma_0 g_z = 0 = \Gamma_1 g_z$.

We also define the function ${M}(z)$ by
\[ \Gamma_1 {s}_z = {M}(z)  \Gamma_0 {s_z}; \]
this means
\[ {M}(z) = \lim_{x\searrow 0} \frac{{s}_z(x)s'(x) - {s}_z'(x)s(x)}{{s}_z(x)c'(x) - {s}_z'(x)c(x)}. \]
A calculation similar to (but simpler than) the one which leads to eqn. (\ref{eq:9}) shows that the resolvent $(A_B-z)^{-1}$ is given by
\begin{equation}\label{eq:resolv} ((A_B-z)^{-1}f)(x) = \frac{{\phi}_z(x) \int_x^1 {s}_z(t)f(t)dt + {s}_z(x)\int_0^x {\phi}_z(t)f(t)dt}{(B+{M}(z))\Gamma_0s_z },  \end{equation}
and so, remembering that $\Gamma_0\phi_z = -1$, 
\[ \Gamma_0(A_B-z)^{-1}f = \frac{-1}{(B+{M}(z))\Gamma_0 s_z}\int_0^1 {s}_z(t)f(t)dt. \]
\begin{equation}\label{mm:last1} (\Gamma (A_B-z)^{-1}f)(x) = \left(\begin{array}{c}  {\displaystyle \frac{-\sqrt{\Im B} }{(B+{M}(z))\Gamma_0 s_z}\int_0^1 {s}_z(t)f(t)dt  } \\ \\
\sqrt{V(x)}\;{\displaystyle \frac{{\phi}_z(x) \int_x^1 {s}_z(t)f(t)dt + {s}_z(x)\int_0^x {\phi}_z(t)f(t)dt}{(B+{M}(z))\Gamma_0s_z }}
\end{array}\right). \end{equation}
For reference, we mention the corresponding expression for $\Gamma (A_B^*-z)^{-1}$, viz.
\[ (\Gamma (A_B^*-z)^{-1}f)(x) = \left(\begin{array}{c}  {\displaystyle \frac{-\sqrt{\Im B} }{(\overline{B}+\tilde{M}(z))\Gamma_0 \tilde{s}_z}\int_0^1 \tilde{s}_z(t)f(t)dt  } \\ \\
\sqrt{V(x)}\;{\displaystyle \frac{\tilde{\phi}_z(x) \int_x^1 \tilde{s}_z(t)f(t)dt + \tilde{s}_z(x)\int_0^x \tilde{\phi}_z(t)f(t)dt}{(\overline{B}+{\tilde{M}}(z))\Gamma_0\tilde{s}_z }}
\end{array}\right). \]

The expression for the map $\Phi$ given in Lemma \ref{lem4.2} shows how it acts upon vectors whose middle component
is of the form $(\Gamma(A_B-\overline{\lambda_0})^{-1})^*e$ for some $e\in E$, while the corresponding formula in Lemma 
\ref{lem4.3} gives the action of $\Phi$ upon vectors with middle component of the form $(\Gamma_*(A_B^*-\overline{\mu_0})^{-1})^*e_*$ for some $e_*\in E_*$. It is therefore useful to have expressions in our current example
for the inverses $((\Gamma(A_B-\overline{\lambda_0})^{-1})^*)^{-1}$ and $((\Gamma_*(A_B^*-\overline{\mu_0})^{-1})^*)^{-1}$, 
which we now obtain.

{Firstly, we may write (\ref{mm:last1}) in the form
 \[ (\Gamma (A_B-z)^{-1}f)(x) =  \left(\begin{array}{c}  \sqrt{\Im B}\langle f, g \rangle_{L_2(0,1)}  \\ \\
\sqrt{V(x)}((A_B-z)^{-1}f)(x) \end{array}\right) \in {\mathbb C}\oplus L_2(V^{-1}({\mathbb R}_+)), \]
in which
\[ g(\cdot) = \overline{\left\{\frac{-s_z(\cdot)}{(B+M(z))\Gamma_0 s_z}\right\}} . \] 
A simple calculation shows that for any test vector $e = \left(\begin{array}{c}c \\ u\end{array}\right)\in {\mathbb C}\oplus L_2(V^{-1}({\mathbb R}_+))$,
\[ \left\langle \Gamma(A_B-z)^{-1}f,\left(\begin{array}{c}c \\ u \end{array}\right) \right\rangle_{{\mathbb C}\oplus L_2(V^{-1}({\mathbb R}_+))}
 = \left\langle f, \; g(\cdot) \sqrt{\Im B} \; c + (A_B-z)^{-*}\sqrt{V} u  \right\rangle_{L_2(0,1)}, \]
 so that
 \begin{equation}\label{mm:last2} (\Gamma(A_B-z)^{-1})^*e =  (\Gamma(A_B-z)^{-1})^*\left(\begin{array}{c}c \\ u \end{array}\right)  =  \overline{\left\{\frac{-s_z(\cdot)}{(B+M(z))\Gamma_0 s_z}\right\}} \sqrt{\Im B} \; c
  + (A_B-z)^{-*}\sqrt{V} u. \end{equation}
The inverse  $((\Gamma(A_B-z)^{-1})^*)^{-1}$ can now be found. Denoting $\varphi:= (\Gamma(A_B-z)^{-1})^*\left(\begin{array}{c}c \\ u \end{array}\right)$, we observe that since the term $(A_B-z)^{-*}\sqrt{V}u$ lies in $\mbox{ker}(\Gamma_1+\overline{B}\Gamma_0)$, while
$(\Gamma_1+\overline{B}\Gamma_0)\overline{s_z} = \overline{(\Gamma_1+B\Gamma_0)s_z} = \overline{(B+M(z))\Gamma_0s_z}$, 
we obtain
\[ c = -\frac{1}{\sqrt{\Im B}}(\Gamma_1+\overline{B}\Gamma_0)\varphi. \]
Furthermore, we know that $\displaystyle{\left(-\frac{d^2}{dx^2}+Q-iV-\overline{z}\right)\overline{s_z}=0}$, while
\[ \left(-\frac{d^2}{dx^2}+Q-iV-\overline{z}\right)(A_B-z)^{-*}\sqrt{V} u = \sqrt{V}u, \]
and so
\[ u = \frac{1}{\sqrt{V}}P_{V^{-1}({\mathbb R}_+)}\left(-\frac{d^2}{dx^2}+Q-iV-\overline{z}\right)\varphi, \]
in which $P_{V^{-1}({\mathbb R}_+)}$ is the orthogonal projection from $L_2(0,1)$ to $L_2(V^{-1}({\mathbb R}_+))$.
Finally we arrive at the expression
\begin{equation}\label{mm:last3}  ((\Gamma(A_B-\overline{\lambda_0})^{-1})^*)^{-1}\varphi = \left(\begin{array}{c} -\frac{1}{\sqrt{\Im B}}(\Gamma_1+\overline{B}\Gamma_0)\varphi  \\  \frac{1}{\sqrt{V}}P_{V^{-1}({\mathbb R}_+)}\left(-\frac{d^2}{dx^2}+Q-iV-\lambda_0 \right)\varphi  \end{array}\right)
 =: \left(\begin{array}{c} e_1 \\ e_2 \end{array}\right) =: e. \end{equation}
 }
{
 The formula
 \begin{equation}\label{mm:last3*}  ((\Gamma_*(A_B^*-\overline{\mu_0})^{-1})^*)^{-1}\varphi_* = \left(\begin{array}{c} -\frac{1}{\sqrt{\Im B}}(\Gamma_1+B\Gamma_0)\varphi_*  \\  \frac{1}{\sqrt{V}}P_{V^{-1}({\mathbb R}_+)}\left(-\frac{d^2}{dx^2}+Q+iV-\mu_0 \right)\varphi_*  \end{array}\right)
 =: \left(\begin{array}{c} e_{1,*} \\ e_{2,*} \end{array}\right) =: e_*. \end{equation}
 is proved similarly. }
 Eqn. (\ref{mm:last3}) can be used to compute 
$\Phi\left (\begin{array}{c} v_-\\ \varphi \\v_+\end{array}\right )$
	for any  $\varphi = ((\Gamma(A_B-\overline{\lambda_0})^{-1})^*)e$, $e\in E$,
	$v_-\in L_2(\Rr_-, E_*)$ and $v_+ \in L_2 (\Rr_+,E)$ {using the expression in
	Lemma \ref{lem4.2}}. 	
Similarly, (\ref{mm:last3*}) allows the computation of
$\Phi\left (\begin{array}{c} v_-\\ \varphi \\v_+\end{array}\right )$
	for any  $\varphi = ((\Gamma(A_B-\overline{\mu_0})^{-1})^*)e_*$, $e_*\in E_*$, $v_-\in L_2(\Rr_-, E_*)$ and 
	$v_+ \in L_2 (\Rr_+,E)$ {using the expression in
	Lemma \ref{lem4.3}}. 
	
	We obtain 
	\[ \Phi\left(\begin{array}{c} v_{-} \\  \varphi = ((\Gamma(A_B-\overline{\lambda_0})^{-1})^*)e \\ v_{+} \end{array}\right) = 
	\left(\begin{array}{c} \hat{v}_+(k) \\ \hat{v}_{-}(k)\end{array}\right) 
	 + \frac{i}{\sqrt{2\pi}(k-\lambda_0)}\left(\begin{array}{c} e \\ -S(\lambda_0)e\end{array}\right). \] 
The quantity $S(\lambda_0)e$ is computed using the $2\times 2$ block operator matrix
expression for $S$ in (\ref{eq:s11s12}). We write explicitly only the most complicated quantity, namely
\begin{eqnarray*} S_{22}(\lambda_0)e_2 = e_2 + T_{22}(\lambda_0)e_2 & = & \frac{1}{\sqrt{V(x)}}P_{V^{-1}(\Rr_+)}\left(-\frac{d^2}{dx^2}+Q(x)-iV(x)-\lambda_0\right)\varphi(x) \\ & + &  2i\sqrt{V(x)} \int_0^1 G(x,t;\lambda_0)P_{V^{-1}(\Rr_+)}\left(-\frac{d^2}{dt^2}+Q(t)-iV(t)-\lambda_0\right)\varphi(t)dt,\end{eqnarray*}
in which the Green's function $G$ is 

\[ G(x,t;z) = \frac{\tilde{\phi}_z(\min(x,t))\,\tilde{s}_z(\max(x,t))}{(\overline{B}+\tilde{M}(z))\Gamma_0 \tilde{s}_z}. \]}
	Similarly,
	\[ \Phi\left(\begin{array}{c} v_{-} \\  \varphi_* = ((\Gamma_*(A_B^*-\overline{\mu_0})^{-1})^*)e_* \\ v_{+} \end{array}\right) = 
	\left(\begin{array}{c} \hat{v}_+(k) \\ \hat{v}_{-}(k)\end{array}\right) 
	 + \frac{i}{\sqrt{2\pi}(k-\mu_0)}\left(\begin{array}{c} S^*(\overline{\mu_0})e_* \\ -e_* \end{array}\right), \]
and one may show that
\begin{eqnarray*} S_{22}^*(\overline{\mu_0})e_{2,*} = e_{2,*} + T_{22}^*(\overline{\mu_0})e_{2,*} & = & \frac{1}{\sqrt{V(x)}}P_{V^{-1}(\Rr_+)}\left(-\frac{d^2}{dx^2}+Q(x)+iV(x)-\mu_0\right)\varphi_*(x) \\ & - &  2i\sqrt{V(x)} \int_0^1 \overline{G(x,t;\overline{\mu_0})}P_{V^{-1}(\Rr_+)}\left(-\frac{d^2}{dt^2}+Q(t)+iV(t)-\mu_0\right)\varphi_*(t)dt.\end{eqnarray*}

\section*{Acknowledgements}
The authors would like to thank the referees whose detailed reading of an earlier draft, and constructive criticisms, enabled us
to improve this article substantially.



\begin{thebibliography}{99}

\bibitem{BHS} Behrndt, Jussi; Hassi, Seppo; de Snoo, Henk; {\em Boundary value problems, Weyl functions, and differential operators. }
Monographs in Mathematics, 108. Birkh\"{a}user/Springer (2020). 

\bibitem{BS87}
Birman, M.~Sh.  \and Solomjak, M.~Z.;
\newblock {\em Spectral theory of selfadjoint operators in {H}ilbert space}.
\newblock Mathematics and its Applications (Soviet Series). D. Reidel
  Publishing Co., Dordrecht, 1987.
\newblock Translated from the 1980 Russian original by S. Khrushch\"{e}v and V.
  Peller.
%
%
%
	%
{\bibitem{Bro71} Brodski\u{\i}, M.S.; {\em Triangular and Jordan representations of linear operators.}
Translations of mathematical monographs, v.~32. American Mathematical Society, Providence, R.I., 1971.
}
{\bibitem{Bro78} Brodski\u{\i}, M.S.; {\em Unitary operator colligations and their characteristic functions.} (Russian). 
Uspekhi Mat. Nauk {\bf 44} (1978), no. 4(202), 141--168,256. English translation: Russian Math. Surveys {\bf 33} (1978),
no. 4, 159--191.}
%
\bibitem{BMNW08} Brown, B.~M., Marletta, M., Naboko, S.~\and Wood, I.; {Boundary triplets and $M$-functions for non-selfadjoint operators, with applications to elliptic PDEs and block operator matrices}. {\em J. London Math. Soc.} (2) { 77} (2008), 700--718.
\bibitem{BMNW20}  Brown, B.~M., Marletta, M., Naboko, S.~\and Wood, I.;   The functional model for maximal dissipative operators (translation form): An approach in the spirit of operator knots. Trans. Amer. Math. Soc. 373 (2020), no. 6, 4145--4187. 
\bibitem{CKS18} Cherednichenko, K., Kiselev, A., Silva, L.; { Functional model for extensions of symmetric operators and applications to scattering theory.}
{\em Netw.~Heterog.~Media} {\bf 13} (2018), no. 2, 191--215.
\bibitem{CKS19} Cherednichenko, K., Kiselev, A., Silva, L.;{Functional model for boundary-value problems}. Mathematika {\bf 67} (2021), no. 3, 596--626. 
\bibitem{dB68} {de Branges, L.; {\em Hilbert spaces of entire functions.} Prentice-Hall, Inc., Englewood Cliffs, N.J. 1968.}
\bibitem{DM91}{ Derkach, V.~\and\ Malamud, M.;}  {\em Generalized resolvents and the boundary value problems for Hermitian operators with gaps}. J.~Funct.~Anal. { 95} (1991), 1--95.
\bibitem{DM92}{ Derkach, V.~\and\ Malamud, M.;} {\em Characteristic functions of almost solvable extensions of Hermitian operators.} Ukrainian Mathematical Journal, Volume 44, Issue 4, 379--401, 1992.
\bibitem{GG91} Gorbachuk, V.I. and Gorbachuk, M.L.; {\it  Boundary value problems for operator differential equations}. Kluwer, Dordrecht (1991).
\bibitem{HP57} Hille, E. and Phillips, R.S.: {\em Functional analysis and semi-groups.} Rev.~ed.~American Mathematical Society Colloquium Publications, vol. 31. American Mathematical Society, Providence, R. I., 1957.

\bibitem{Koosis} { Koosis, P.;} {\em Introduction to $H_p$ spaces.} Second edition. 
Cambridge Tracts in Mathematics, 115. Cambridge University Press, Cambridge, 1998.

\bibitem{Lan61}
Langer, H.;
\newblock Ein {Z}erspaltungssatz f\"ur {O}peratoren im {H}ilbertraum.
\newblock {\em Acta Math. Acad. Sci. Hungar.}, 12:441--445, 1961.
\bibitem{LP67} Lax, Peter D. and Phillips, Ralph S.: Scattering theory. Pure and Applied Mathematics, Vol. 26 Academic Press, New York-London 1967.
\bibitem{Liv46} Liv\v{s}ic, M. S.:  On a certain class of linear operators in Hilbert space, {\sl Mat.~Sbornik}, {\bf 19} (1946), no. 2, 239--262.
\bibitem{Liv54} Liv\v{s}ic, M. S.: On spectral decomposition of linear nonself-adjoint operators. (Russian) {\sl Mat.~Sbornik N.S.} {\bf 34} (76), (1954), 145--199.
\bibitem{Liv73} Liv\v{s}ic, M. S.: Operators, oscillations, waves (open systems). Translated from the Russian by Scripta Technica, Ltd. English translation edited by R. Herden. Translations of Mathematical Monographs, Vol. 34. American Mathematical Society, Providence, R.I., 1973.

\bibitem{Nab81} Naboko, S.; {\em A functional model of perturbation theory and its application to scattering theory}. Trudi. Matem.~Inst.~Steklov {\bf 147} (1980); Engl.~transl.~in Proc.~Steklov Inst.~of Math.~(2) (1981), 85--116.
\bibitem{NR01}  Naboko, S. and Romanov, R.; {\em Spectral singularities, Szokefalvi-Nagy-Foias functional model and the spectral analysis of the Boltzmann operator}. In: Recent advances in operator theory and related topics (Szeged, 1999), Oper. Theory Adv. Appl., {\bf 127}, Birkh\"{a}user, Basel (2001), 473--490.

\bibitem{NiessenZettl92} Niessen, H.-D. and Zettl, A.,
\newblock Singular {S}turm-{L}iouville problems: the {F}riedrichs
              extension and comparison of eigenvalues.
\newblock Proc. London Math. Soc. (3), {\bf 64}, 545--578 (1992).

\bibitem{Nik86} Nikolski{\u{\i}}, N.K.; {\em Treatise on the Shift Operator.} Springer, Berlin, 1986.



\bibitem{NV86}
Nikolski{\u{\i}}, N.K. and Vasyunin, V.;
\newblock Notes on two functional models.
\newblock In {\em The Bieberbach conjecture (West Lafayette, Ind., 1985)}, 113--141, Math. Surveys Monogr., 21, Amer. Math. Soc., Providence, RI, 1986.
%

\bibitem{NikVas89} {Nikolski\u{\i}, Nikola\u{\i} K. and Vasyunin, Vasily I.},
     {\em A unified approach to function models, and the transcription
              problem}. Oper. Theory Adv. Appl. {\bf 41}, 405--434, Birkh\"{a}user, Basel (1989).


%
\bibitem{NikVas98}
Nikolski{\u{\i}}, N.K. and Vasyunin, V.;
\newblock Elements of spectral theory in terms of the free function model. {I}.
 {B}asic constructions.
\newblock In {\em Holomorphic spaces ({B}erkeley, {CA}, 1995)}, volume~33 of
 {\em Math. Sci. Res. Inst. Publ.}, pages 211--302. Cambridge Univ. Press,
 Cambridge, 1998.
%


\bibitem{Pav75} Pavlov, B.~S.; {\em Conditions for separation of the spectral components of a dissipative operator.} (Russian) Izv.~Akad.~Nauk SSSR Ser.~Mat. (240) {\bf 39}  (1975), 123--148.
\bibitem{Pav76}
{Pavlov, B.~S.;
\newblock Dilation theory and spectral analysis of nonselfadjoint differential
  operators.
\newblock Mathematical programming and related questions ({P}roc.
               {S}eventh {W}inter {S}chool, {D}rogobych, 1974), {T}heory of
               operators in linear spaces ({R}ussian)
Central. \`Ekonom. Mat. Inst. Akad. Nauk SSSR, Moscow.
\newblock 3--69, 1976.}
\bibitem{Pav77} Pavlov, B.~S.; {\em Selfadjoint dilation of the dissipative Schr\"{o}dinger operator and its resolution in terms of eigenfunctions.} Mat.~Sb. (144) {\bf 102} (1977), 511--536; English transl.: Math.~USSR Sb.~{\bf 31} (1977).
\bibitem{Pav02}
 Pavlov, B.~S.;
{\em  Resonance quantum switch: matching domains}.
\newblock In {\em Surveys in analysis and operator theory ({C}anberra, 2001)},
  volume~40 of {\em Proc. Centre Math. Appl. Austral. Nat. Univ.}, pages
  127--156. Austral. Nat. Univ., Canberra, 2002.
{\bibitem{Ryz07} Ryzhov, V.:
Functional model of a class of non-selfadjoint extensions of symmetric operators.  Operator theory, analysis and mathematical physics,  117--158,Oper.~Theory Adv.~Appl., 174, Birkh\"{a}user, Basel, 2007.}
{\bibitem{Ryz08} Ryzhov, V.: Functional model of a closed non-selfadjoint operator.  {\sl Int.~Eq.~Oper.~Th}, {\bf 60} (2008), 539-- 571.}
\bibitem{Str60} \v{S}traus, A. V.: Characteristic functions of linear operators. (Russian) 
Izv. Akad. Nauk SSSR Ser. Mat.  24,  1960, 43--74.

\bibitem{SFBK10} Sz.-Nagy, B., Foia{\lfhook{s}}, C., Bercovici, H. and K\'erchy, L.:
{\sl    Harmonic analysis of operators on {H}ilbert space},
Second Edition, Springer, New York; 2010.
\bibitem{Tik2004}
Tikhonov, Alexey,
\newblock  Free functional model related to simply-connected domains, in {\em Spectral methods for operators of mathematical physics},
\newblock {Oper. Theory Adv. Appl.}, {\bf 154}, 219--231 (2004).

  
\bibitem{Vas77}
Vasyunin, V. I.,
\newblock  The construction of the {B}. {S}z\"{o}kefalvi-{N}agy and {C}.
              {F}oia\c{s} functional model,
\newblock {\em Investigations on linear operators and the theory of
              functions, VIII}.
\newblock Zap. Nauchn. Sem. Leningrad. Otdel. Mat. Inst. Steklov
              (LOMI) {\bf 73}, 16--23, 229 (1978).

\end{thebibliography}
\end{document}